\documentclass[10pt,a4paper, leqno]{article}
\usepackage[utf8]{inputenc}
\usepackage{amsmath}
\usepackage{amsfonts}
\usepackage{amssymb}
\usepackage{comment}
\usepackage{hyperref}
\usepackage{enumitem,pstricks,xy,pst-node}
\xyoption{all}
\usepackage{amsthm}
\usepackage{booktabs} 
\usepackage[color]{changebar} 
\cbcolor{red} 
\usepackage{mathrsfs} 
\usepackage{verbatim} 
\usepackage[disable] 
{todonotes}
\usepackage[normalem]{ulem} 
\usepackage{caption} 
\usepackage{nicefrac} 
\captionsetup[table]{skip=5 pt}
\usepackage{enumitem} 

\usepackage{etoolbox}%

\patchcmd{\thebibliography}{\chapter*}{\par\let\clearpage\relax\chapter*}{\typeout{success}}{\typeout{failure}}

\usepackage{placeins} 

\newtheorem{theorem}{Theorem}[section]
\newtheorem{lemma}[theorem]{Lemma}
\newtheorem{proposition}[theorem]{Proposition}
\newtheorem{corollary}[theorem]{Corollary}
\newtheorem{definition}[theorem]{Definition}
\theoremstyle{definition}

\newtheorem{remark}[theorem]{Remark}
\newtheorem{notation}[theorem]{Notation}
\newtheorem*{ack}{Acknowledgements}

\theoremstyle{remark}
\newtheorem{example}[theorem]{Example}

\def\cU{{\mathcal U}}

\def\CC{\mathbb{C}}

\def\cO{{\mathcal O}}

\def\af1{\mathbf{aff}_1}

\DeclareMathOperator{\Chern}{Chern}

\newcommand{\ladi}{\begin{lastadd}}
\newcommand{\ladf}{\end{lastadd}}
\newcommand{\lrei}{\begin{lastrem}}
\newcommand{\lref}{\end{lastrem}}
\newenvironment{lastadd}
{\cbstart\color{red}}
{\todo{red to remove}\cbend}
\newenvironment{lastrem}
{\cbstart\color{yellow}}
{\cbend}

\author{Vladimiro Benedetti\thanks{Aix-Marseille Universit\'e, CNRS, Centrale Marseille, I2M, UMR 7373, 13453 Marseille, France.}}
\title{Manifolds of low dimension with trivial canonical bundle in Grassmannians}
\begin{document}
\maketitle

\begin{abstract}
We study fourfolds with trivial canonical bundle which are zero loci of sections of homogeneous, completely reducible bundles over ordinary and classical complex Grassmannians. We prove that the only hyper-K\"ahler fourfolds among them are the example of Beauville and Donagi, and the example of Debarre and Voisin. In doing so, we give a complete classification of those varieties. We include also the analogous classification for surfaces and threefolds.
\end{abstract}
\setcounter{tocdepth}{1}
\tableofcontents

\section{Introduction}
In Complex Geometry there are interesting connections between special varieties and homogeneous spaces. A striking evidence of this relation is the work of Mukai about Fano manifolds (as a reference, see \cite{Mukai10}). Mukai was able to reinterpret families of Fano threefolds as families of subvarieties of homogeneous spaces. His idea was that on a sufficiently general Fano threefold $X$, one can prove the existence of a certain positive vector bundle. Therefore, one gets a morphism, or eventually an embedding, from $X$ to a certain Grassmannian. It turned out that all the families of prime Fano threefolds (for which a classification was already known) admitted a nice description in terms of homogeneous bundles over Grassmannians. This description of the families was also helpful to understand better their geometry in relation to the well known geometry of the Grassmannians.

From then, other works have showed the richness of homogeneous spaces in providing examples of special varieties. For instance, it is generally a difficult problem to provide explicit families of hyper-K\"ahler manifolds, and few are known; among them, two can be seen as varieties of zeroes of a general global section of a homogeneous vector bundle over a Grassmannian. Both are fourfolds; the first one is the family of varieties of lines in a cubic fourfold, due to Beauville and Donagi (\cite{BeauvilleDonagi}). The variety of lines in a cubic fourfold is actually a subvariety of the Grassmannian of (projective) lines on a projective space of dimension five ($Gr(2, 6)$). It is the zero locus of a section of the third symmetric power of the dual of the tautological bundle. We denote this family of varieties by $X_1$. The second one, more recent, is due to Debarre and Voisin (\cite{VoisinDebarre}). They take the Grassmannian $Gr(6,10)$ of $6$-dimensional planes in a vector space $V$ of dimension ten, and consider a general skew symmetric $3$-form over $V$. The variety of planes isotropic with respect to this form is the zero locus of a section (which correspond to the form) of the third anti-symmetric power of the dual of the tautological bundle. They prove that this is a family, which we denote by $X_2$, of fourfolds which are hyper-K\"ahler.

These two examples motivated the present work. We study fourfolds which arise as zero loci of general global sections of homogeneous, completely reducible bundles over ordinary and classical Grassmannians. We will see that the only hyper-K\"ahler varieties of this form are those already mentioned; indeed, the following theorem holds:

\begin{theorem}
Suppose \emph{Y} is a hyper-K\"ahler fourfold which is the zero locus of a general section of a homogeneous, completely reducible, globally generated vector bundle over an ordinary or classical (symplectic or orthogonal) Grassmannian. Then either $Y$ is of type $X_1$ or of type $X_2$.
\end{theorem}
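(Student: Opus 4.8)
The plan is to convert the problem into a finite enumeration controlled by two numerical constraints, and then to sieve out the hyper-Kähler members using a single cohomological invariant.

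First I would impose the Calabi--Yau constraint. Write $G$ for the Grassmannian, $E$ for the bundle, and $Y = Z(s)$ for the smooth zero locus of a general section; smoothness and the expected codimension are guaranteed by global generation together with a Bertini-type argument. Adjunction gives $K_Y = (K_G \otimes \det E)|_Y$, and since a hyper-Kähler fourfold has trivial canonical bundle while $\det E$ and $K_G$ are homogeneous line bundles, this forces $\det E \cong \omega_G^{-1}$, i.e. $c_1(E)$ equals the index of $G$ times the Plücker class. Combined with the dimension equation $\rank E = \dim G - 4$, this is the source of finiteness. Because $E$ is completely reducible and homogeneous it decomposes as $E = \bigoplus_i \Sigma^{\lambda_i}$ into irreducible homogeneous bundles, and $E$ is globally generated iff each summand is; over an ordinary or classical Grassmannian the globally generated irreducible homogeneous bundles are classified by dominant weights lying in an explicit cone, with computable rank and first Chern class. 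The two constraints then become a linear system in the weights subject to positivity, which I expect to have only finitely many solutions. I would organize these type by type (ordinary, symplectic, orthogonal), using structural reductions---splitting off line-bundle summands, or detecting when $Y$ is forced into a sub-Grassmannian---to keep the bookkeeping under control.

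Second, I would single out the hyper-Kähler cases. For a globally generated but non-ample bundle the strong Lefschetz theorem does not apply (indeed $b_2(Y)$ can far exceed $b_2(G)$), so the Hodge numbers of $Y$ must be computed directly. The tool is the Koszul resolution $0 \to \wedge^{r} E^\vee \to \cdots \to E^\vee \to \mathcal{O}_G \to \mathcal{O}_Y \to 0$, which gives $\chi(\mathcal{O}_Y) = \sum_{p=0}^{r} (-1)^p \chi(G, \wedge^p E^\vee)$ and, via Borel--Weil--Bott on $G$, lets me compute each $H^q(Y, \mathcal{O}_Y)$ from the groups $H^j(G, \wedge^p E^\vee)$. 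A hyper-Kähler fourfold is irreducible holomorphic symplectic, so $h^{1,0}(Y) = h^{3,0}(Y) = 0$ and $h^{2,0}(Y) = 1$, whence $\chi(\mathcal{O}_Y) = 3$; I would use this equality as the sieve, discarding every candidate with $\chi(\mathcal{O}_Y) \neq 3$ (the value $2$ signalling a strict Calabi--Yau fourfold and $\chi \geq 4$ forcing $h^{2,0} \geq 2$, hence reducible holonomy such as a product of K3 surfaces). Beauville--Bogomolov confirms the converse on the surviving simply connected candidates, so the sieve is sharp. I expect exactly two survivors, which I would match with $\Sym^3 U^\vee$ on $\Gr(2,6)$ and $\wedge^3 U^\vee$ on $\Gr(6,10)$, that is with $X_1$ and $X_2$; the hyper-Kähler property of these two is the content of Beauville--Donagi and Debarre--Voisin.

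The main obstacle is completeness of the classification rather than any individual computation: I must be certain that the rank and determinant constraints, together with global generation, genuinely bound the weights of every summand over each classical type, so that no admissible bundle is missed---this is where careful type-by-type reductions are essential. A secondary difficulty is that Borel--Weil--Bott has to be invoked many times and the weight combinatorics (locating the dominant Weyl chamber and counting inversions) is error-prone; I would mitigate this by first pruning the list with the cheap invariant $\chi(\mathcal{O}_Y)$, reserving the full Hodge-number computation for the few borderline candidates.
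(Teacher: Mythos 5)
Your proposal is correct and follows essentially the paper's own route: finiteness comes from $\operatorname{rank}{\mathcal F}=\dim G-4$ together with $\det{\mathcal F}\cong\omega_G^{-1}$ (the paper's \emph{dex} equation), the classification is carried out type by type on the dominant weights of the irreducible summands, and the hyper-K\"ahler members are then detected by the sieve $\chi({\mathcal O}_Y)=3$, which leaves exactly $S^3{\mathcal U}^*$ on $Gr(2,6)$ and $\Lambda^3{\mathcal Q}$ on $Gr(4,10)$ (equivalently $\Lambda^3{\mathcal U}^*$ on $Gr(6,10)$), i.e.\ $X_1$ and $X_2$. The only differences are implementational: the paper evaluates $\chi({\mathcal O}_Y)$ by Hirzebruch--Riemann--Roch (via SCHUBERT2, and via Borel's presentation of the cohomology ring for even orthogonal Grassmannians) rather than by the Koszul complex and Borel--Weil--Bott, and where you gloss over simple connectedness of the survivors, the paper proves that $\chi({\mathcal O}_Y)\in\{2,3\}$ by itself forces $Y$ to be simply connected (hence CY or IHS) using Beauville's finite-cover refinement of the decomposition theorem.
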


This theorem will be a direct consequence of the classification theorems of the following sections. For ordinary Grassmannians, we followed the analogous study done in \cite{Kuchle1995}, where the author has classified and then studied the properties of Fano fourfolds with index one obtained in the same way. Already in that case the two constraints for the varieties to be four dimensional and Fano of index one were sufficient to have a classification of the bundles which could give rise to the required varieties. 

We will generalize the result by giving a classification of fourfolds with trivial canonical bundle, following substantially the same ideas and proofs. With the help of the MACAULAY2-package SCHUBERT2 (\cite{Macaulay}) we will determine which subvarieties are Calabi-Yau (CY) and which are irreducible holomorphic symplectic (IHS, which is the same as hyper-K\"ahler) among the examples we have found. 

Then we will extend the classification to subvarieties of dimension four of the other classical Grassmannians. It should be remarked that, even though the symplectic and orthogonal Grassmannians can already be seen as varieties of zeroes of sections of homogeneous bundles over the ordinary Grassmannian, a new classification needs to be done. In fact, there are homogeneous bundles over the classical Grassmannians that are not restriction of bundles over the ordinary ones. For instance, the orthogonal of the tautological bundle is not irreducible, and one can quotient it by the tautological bundle. Moreover, the spin bundles in the orthogonal case do not extend to a bundle on the ordinary Grassmannian.

Finally we will present the corresponding results for dimension two and three. Some interesting varieties in this case have already been studied in detail. In particular, Mukai proved the unirationality of some moduli spaces of $K3$ surfaces with a given genus by giving an explicit locally complete family of $K3$s in a Grassmannian; those varieties are again zero locus of sections of homogeneous bundles (\cite{Mukai10}, \cite{Mukai13}, and \cite{Mukai1820}). We give the classification for surfaces and threefolds and, for the surfaces, we report also the computation of the degree (which gives the genus of the natural polarization of the surface) and the Euler characteristic. Surprisingly enough, there are many more cases, and they would be worth to be studied thoroughly.
\bigskip

As we were finishing the writing of this article, an article by D. Inoue, A. Ito and M. Miura which concerns the same subject has been published on arXiv (\cite{Japonais}). In this work the authors proved that, under the same hypothesis as ours, a finite classification is possible for subvarieties of the ordinary Grassmannian with trivial canonical bundle of any fixed dimension. They also study in more detail the case of CY threefolds, giving an explicit classification similar to ours and studying the cases found. On the other hand, they do not deal with the cases of symplectic and orthogonal Grassmannians, which is interesting too (for example, see Mukai's articles on $K3$ surfaces of genus seven and eighteen).

\begin{ack}
This work has been carried out in the framework of the Labex Archim\`ede (ANR-11-LABX-0033) and 
of the A*MIDEX project (ANR-11-IDEX-0001-02), funded by the ``Investissements d'Avenir" 
French Government programme managed by the French National Research Agency.\\
I would like to thank my advisor Laurent Manivel for useful and numerous suggestions. Moreover, I would like to thank Alexander Kuznetsov for suggesting to include the case of $OGr(n-1,2n)$ appearing in section \ref{ssOGr(n-1,2n)}, although this variety has Picard rank two.\\ 
\end{ack}

\section{Preliminaries}

In this section we recall some facts about homogeneous bundles over homogeneous varieties; for a more complete exposition see \cite{Ottaviani}. Afterwards, we recall basic definitions about varieties with trivial canonical bundle, and the Beauville-Bogomolov decomposition.

\subsection{Homogeneous bundles}
\label{prelhom}

Let $G$ be a reductive complex algebraic group (for instance, one of the classical groups $SL(n, \mathbb{C})$, $Sp(2n, \mathbb{C})$, $SO(2n+1, \mathbb{C})$ or $SO(2n, \mathbb{C})$). A variety $X$ is G-homogeneous if it admits a transitive algebraic (left) action of $G$. All homogeneous varieties can be seen as quotients $G/P$ of $G$ by a subgroup $P$. 
A homogeneous variety $G/P$ is projective and rational if and only if $P$ contains a Borel subgroup $B$ (it is the case of the Grassmannians); in this case the subgroup $P$ is said to be parabolic. Parabolic subgroups can be classified combinatorially.

A homogeneous vector bundle ${\cal F}$ over a homogeneous variety $X$ is a vector bundle which admits a G-action compatible with the one on the variety $X=G/P$.
If a vector bundle ${\cal F}$ is homogeneous, then the fiber ${\cal F}_{[P]}$ over the point $[P]\in X$ is stabilized by the subgroup $P$, i.e. ${\cal F}_{[P]}$ is a representation of $P$; the converse holds as well. More precisely, there is an equivalence of categories between homogeneous vector bundles over $G/P$ and representations of $P$. 
Therefore, in this context, one can define irreducible and indecomposable homogeneous bundles, in analogy with the definitions in representation theory. 

Note that $P$, contrary to $G$, is not reductive in general. Let $P_U$ be the unipotent factor of $P$ and $P_L$ a Levi factor. The latter is a reductive group. It turns out that a representation $\rho: P \to GL(V)$ (where $V$ is a vector space) is completely reducible if and only if $\rho|_{P_U}$ is trivial. So, completely reducible homogeneous bundles are identified with representations of $P_L$, and these in turn are identified with their maximal weights. This provides a combinatorial way to classify completely reducible homogeneous bundles which consists in indicating the maximal weights of the irreducible representations to which they correspond.

\subsection{Varieties with trivial canonical bundle}

There are three main categories of varieties which can be thought of as the "building blocks" of K\"ahler varieties with trivial canonical bundle, as suggested by the Beauville-Bogomolov decomposition theorem: complex tori, Calabi-Yau manifolds, and irreducible holomorphic symplectic manifolds. Complex tori of dimension $n$ are just compact quotients of $\CC^n$ by a lattice. Let us examine the other two classes in more detail. 

For what concerns Calabi-Yau manifolds, in the literature various different definitions can be found; we will use the following one:

\begin{definition}
\label{defCY}
A manifold $X$ with trivial canonical bundle is of Calabi-Yau type if $H^0(X, \Omega_{X}^0)\cong H^0(X, \Omega_{X}^{\dim(X)})\cong \mathbb{C}$, $H^0(X, \Omega_{X}^k)=0$ for $1\leq k \leq \dim(X)$, and its dimension is at least three.
\end{definition}

\begin{remark}
Calabi-Yau manifolds are manifolds of Calabi-Yau type which are simply connected.
\end{remark}

The condition on the dimension is required because K3 surfaces (which meet the requirements of Definition \ref{defCY}) are considered to be irreducible holomorphic symplectic surfaces. This brings us to the last class of varieties we consider:

\begin{definition}
A manifold $Z$ with trivial canonical bundle and dimension $2n$ is irreducible symplectic holomorphic, or hyper-K$\ddot{a}$hler, if $H^0(Z, \Omega_{Z}^*)=\mathbb{C}[\sigma] / \sigma^{n+1}$, where $\sigma\in H^0(Z, \Omega_{Z}^2)$ is everywhere nondegenerate.
\end{definition}

As anticipated before, these definitions gain in importance if we consider the following theorem (see \cite{Bogomolov}): 

\begin{theorem}[Decomposition theorem]
\label{decthm}
Let Y be a compact K$\ddot{a}$hler simply connected manifold with $K_Y={\cal O}_Y$. Then
\[
Y=\prod_i X_i\times\prod_j Z_j
\]
where 
\newline
-- $ X_i$ are simply connected Calabi-Yau manifolds;
\newline
-- $ Z_j$ are simply connected and irreducible holomorphic symplectic.
\end{theorem}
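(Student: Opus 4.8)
The plan is to obtain the decomposition from Riemannian holonomy theory, the essential analytic input being Yau's theorem; this is the content of the reference \cite{Bogomolov}, and I sketch the argument. First I would invoke Yau's solution of the Calabi conjecture: since $Y$ is compact K\"ahler with $K_Y=\cO_Y$, in particular $c_1(Y)=0$, there exists a Ricci-flat K\"ahler metric $g$ on $Y$ in each given K\"ahler class. For a K\"ahler metric the holonomy lies in $U(n)$ with $n=\dim_{\CC}Y$, and Ricci-flatness forces the induced connection on the determinant line bundle $K_Y$ to be flat, so the restricted holonomy group $\mathrm{Hol}^{0}(g)$ reduces to $SU(n)$. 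Because $Y$ is simply connected the holonomy group is connected, hence $\mathrm{Hol}(g)\subseteq SU(n)$.

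Next I would apply the de Rham decomposition theorem to the Riemannian universal cover of $Y$, which coincides with $Y$ itself by simple connectedness (and $Y$ is complete, being compact). This realises $Y$ isometrically as a product $\CC^{k}\times M_1\times\cdots\times M_r$, where the $M_i$ are simply connected with irreducible holonomy representation and $\CC^{k}$ is the maximal flat factor. Since $Y$ is compact while $\CC^{k}$ is not, compactness forces $k=0$, so $Y=\prod_i M_i$ is a finite product of compact, simply connected, irreducible, Ricci-flat K\"ahler manifolds. Crucially, the complex structure tensor $J$ is parallel for the Levi-Civita connection of a K\"ahler metric, hence $J$-invariant under holonomy; therefore each de Rham factor is $J$-stable, so the $M_i$ are complex submanifolds and the splitting is an isomorphism of complex manifolds, each factor inheriting a trivial canonical bundle.

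Then I would appeal to Berger's classification of irreducible holonomy groups. An irreducible, non-locally-symmetric, Ricci-flat K\"ahler manifold $M_i$ has holonomy either $SU(n_i)$ or $Sp(m_i)$ with $n_i=2m_i$ (the irreducible Ricci-flat locally symmetric case is flat, already excluded; the other Berger groups $G_2,\mathrm{Spin}(7)$ do not act as the holonomy of a K\"ahler manifold). By the holonomy principle, holomorphic $(p,0)$-forms correspond to holonomy-invariant elements of $\Lambda^p$: for $\mathrm{Hol}=SU(n_i)$ the only invariants occur in degrees $0$ and $n_i$, giving $H^0(\Omega^0)\cong H^0(\Omega^{n_i})\cong\CC$ and no intermediate holomorphic forms, so $M_i$ meets Definition \ref{defCY} and is Calabi-Yau; for $\mathrm{Hol}=Sp(m_i)$ there is a parallel, everywhere nondegenerate holomorphic $2$-form $\sigma$ with $H^0(\Omega^{\bullet})=\CC[\sigma]/\sigma^{m_i+1}$, so $M_i$ is irreducible holomorphic symplectic (in complex dimension $2$ the coincidence $SU(2)=Sp(1)$ places K3 factors on the symplectic side, consistent with the dimension hypothesis in Definition \ref{defCY}). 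Relabelling the $SU$-factors as $X_i$ and the $Sp$-factors as $Z_j$ yields $Y=\prod_i X_i\times\prod_j Z_j$ with the stated properties.

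The main obstacle is precisely the passage between the analytic and algebro-geometric pictures. Yau's theorem is what guarantees a Ricci-flat representative at all, and the delicate point is to verify that the de Rham splitting is compatible with the integrable complex structure, so that one obtains a holomorphic product rather than merely a Riemannian one; this is what the parallelism of $J$ secures. The remaining care lies in translating the holonomy reductions to $SU$ and $Sp$ into the exact pattern of holomorphic forms demanded by Definition \ref{defCY} and by the definition of irreducible holomorphic symplectic manifolds, which is where the holonomy principle does the decisive work.
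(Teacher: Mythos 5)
The paper does not prove this theorem at all: it is the classical Beauville--Bogomolov decomposition, stated with a citation to \cite{Bogomolov} and used as a black box later (e.g.\ in the proof of Proposition \ref{propCYvsIHS}). Your sketch reproduces the standard proof of that cited result, essentially Beauville's argument in \cite{Beauville}: Yau's theorem gives a Ricci-flat K\"ahler metric; Ricci-flatness plus simple connectedness forces $\mathrm{Hol}(g)\subseteq SU(n)$; the de Rham theorem splits $Y$ isometrically into irreducible factors, the flat factor being killed by compactness; parallelism of $J$ makes each factor a complex submanifold; and Berger's list leaves only $SU(n_i)$ and $Sp(m_i)$ as possible holonomies, yielding the Calabi--Yau and irreducible holomorphic symplectic factors respectively. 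Two steps deserve one more line each. First, $J$-stability of the de Rham factors is not automatic from equivariance alone: you should note that the factors are pairwise non-isomorphic as representations of the full holonomy group (each irreducible piece of the holonomy acts trivially on the other factors), so the holonomy-equivariant $J$ preserves each isotypic summand, hence each factor. Second, and more substantively, your appeal to ``the holonomy principle'' to compute $H^0(\Omega^p_{M_i})$ is too quick: that principle matches \emph{parallel} forms with holonomy invariants, and to know that every holomorphic form on $M_i$ is parallel one needs Bochner's principle (on a compact Ricci-flat K\"ahler manifold every holomorphic tensor field is parallel), which is a genuine ingredient of Beauville's proof. With those two points made explicit, your argument is complete and coincides with the proof in the literature on which the paper relies.
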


This result will be useful later to distinguish which of the varieties that we find are of Calabi-Yau type and which are irreducible holomorphic symplectic.

\section{Fourfolds in ordinary Grassmannians}

Let $Gr(k,n)$ be the Grassmannian of $k$-planes in a $n$-dimensional complex vector space. Let us denote by ${\cal U}$ the tautological bundle of rank $k$ (and ${\cal U}^*$ its dual), and by ${\cal Q}$ the tautological quotient bundle of rank $n-k$; $\Lambda^i(E)$ will denote the $i$-th exterior power of the bundle $E$, and $S^i(E)$ the $i$-th symmetric power of $E$; the ample generator of the Picard group of the Grassmannian (which corresponds to $det ({\cal U}^*)=det ({\cal Q})$) will be denoted by ${\cal O}(1)$, and ${\cal O}(n)={\cal O}(1)^{\otimes n}$. 

Our first main theorem is the following:

\begin{theorem}
\label{thm4ordin}
Let \emph{Y} be a fourfold with $K_Y={\cal O}_Y$ which is the variety of zeroes of a general section of a homogeneous, completely reducible, globally generated vector bundle ${\cal F}$ over Gr(k,n). Up to the identification of Gr(k,n) with Gr(n-k,n), the only possible cases are those appearing in Table \ref{tablefourord} in Appendix \ref{apptables}.
\end{theorem}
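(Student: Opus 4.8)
The plan is to turn the two defining conditions---that $Y$ has dimension four and that $K_Y=\cO_Y$---into numerical constraints on $\cF$, and then to run through the combinatorially controlled list of completely reducible, globally generated homogeneous bundles. First I would record the two basic equations. Since $Y$ is the zero locus of a regular section of $\cF$, adjunction gives $K_Y=(K_{Gr(k,n)}\otimes\det\cF)|_Y$; as $K_{Gr(k,n)}=\cO(-n)$, and $\cO(1)|_Y$ is ample hence non-torsion in $\Pic(Y)$ (so that $\cO(m)|_Y=\cO_Y$ forces $m=0$), the condition $K_Y=\cO_Y$ forces $\det\cF=\cO(n)$, i.e. $c_1(\cF)=n$. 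The dimension condition gives $\rank\cF=\dim Gr(k,n)-4=k(n-k)-4$. Finally, a general section of a trivial summand $\cO_{Gr}$ would be nowhere zero, so $\cF$ has \emph{no} trivial summand; writing $\cF=\bigoplus_i\cF_i$ with each $\cF_i$ irreducible and globally generated, every $\cF_i$ then has $c_1(\cF_i)=d_i\ge 1$, and $\sum_i d_i=n$. Up to the duality $Gr(k,n)\cong Gr(n-k,n)$ I may assume $k\le n-k$.

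Next I would use the dictionary of Subsection~\ref{prelhom}: each irreducible globally generated $\cF_i$ is of the form $\Sigma^{\alpha}\cU^*\otimes\Sigma^{\beta}\cQ$ for partitions $\alpha$ (at most $k$ parts) and $\beta$ (at most $n-k$ parts), and a direct Chern-class computation yields $r_i=\dim\Sigma^{\alpha}\CC^k\cdot\dim\Sigma^{\beta}\CC^{n-k}$ and $d_i=r_i\bigl(\tfrac{|\alpha|}{k}+\tfrac{|\beta|}{n-k}\bigr)$. The two \emph{extremal} summands here are $\cU^*$ ($r=k$, $d=1$) and $\cQ$ ($r=n-k$, $d=1$): the zero locus of a general section of $\cU^*$ is $\{U\subset\ker\phi\}\cong Gr(k,n-1)$, and that of $\cQ$ is $\{U\ni v\}\cong Gr(k-1,n-1)$. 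Hence a summand $\cU^*$ or $\cQ$ lets one replace $(Gr(k,n),\cF)$ by $(Gr(k,n-1),\cF')$ or $(Gr(k-1,n-1),\cF')$, with $\cF'$ again completely reducible, globally generated and of the same type. I would first perform all such reductions, so that from now on $\cF$ contains neither $\cU^*$ nor $\cQ$ as a summand; this is exactly what prevents spurious infinite families (e.g. a $\cQ$ summand over $Gr(2,n)$ merely realises a fixed complete intersection inside a projective space).

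With $\cU^*$ and $\cQ$ excluded, every non-line-bundle summand is ``large'': any summand involving $\cQ$ beyond a single box (such as $\Lambda^2\cQ$, $S^2\cQ$ or $\cU^*\otimes\cQ$) has rank growing with $n-k$, while a higher power of $\cU^*$ carries a comparatively large $c_1$. The heart of the argument is to feed these growth estimates into the two equations $\sum_i r_i=k(n-k)-4$ and $\sum_i d_i=n$: the rank budget $k(n-k)-4$ forbids the quadratic-rank summands once $n-k$ is large, whereas filling the required rank from line bundles (rank $1$, $d\ge 1$) together with the remaining admissible bundles becomes impossible for large $n$ because the degree budget $n$ is too small relative to the rank that must be produced. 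This pins $(k,n)$ to a finite, explicit window.

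The main obstacle is precisely this bounding step: one must rule out \emph{every} admissible multiset of summands outside a small range of $(k,n)$, which requires a careful, partition-by-partition accounting of the globally generated irreducible bundles and their pairs $(r_i,d_i)$, rather than the crude ratio estimates sketched above. Once $(k,n)$ is bounded, the classification becomes a finite search: for each such Grassmannian one enumerates the partitions $\alpha,\beta$ with $d_i\le n$, lists the multisets of summands satisfying both equations, and discards those whose general zero locus is empty, singular, or of the wrong dimension. The surviving cases are exactly the entries of Table~\ref{tablefourord}, the ranks and Chern numbers being checked with the \Mac-package \texttt{Schubert2}.
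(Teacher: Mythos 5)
Your setup is sound and coincides with the paper's: adjunction together with $\Pic(Gr(k,n))=\ZZ\cdot{\cal O}(1)$ gives $\det{\cal F}={\cal O}(n)$ and $\rank{\cal F}=k(n-k)-4$, which are exactly the paper's equations (\ref{eqdex}) and (\ref{eqrank}); trivial summands are rightly excluded; and discarding ${\cal U}^*$ and ${\cal Q}$ summands by passing to $Gr(k,n-1)$ or $Gr(k-1,n-1)$ is part (i) of the paper's Lemma \ref{lemma3.2} (quoted from K\"uchle). Your parametrization $\Sigma^{\alpha}{\cal U}^*\otimes\Sigma^{\beta}{\cal Q}$ with the invariants $(r_i,d_i)$ is equivalent to the paper's weight notation and its $dex$ function. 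But from that point on the proposal stops being a proof: the entire content of the theorem is the step you describe as feeding "growth estimates into the two equations," and which you then concede is "the main obstacle \dots rather than the crude ratio estimates sketched above." You never actually bound $(k,n)$, nor enumerate the admissible multisets, and the ratio heuristics you give are genuinely insufficient: the surviving cases (d1)--(d9) live precisely in the regime where rank-$\binom{k}{2}$ summands such as $\Lambda^2{\cal U}^*$ nearly balance both budgets (e.g. $(\Lambda^2{\cal U}^*)^{\oplus 2}\oplus{\cal O}(1)^{\oplus 2}$ on $Gr(6,12)$), so a crude comparison of $d_i/r_i$ cannot separate them from infinitely many nearby candidates.

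What the paper does to close this gap is exactly the machinery your outline omits: part (ii) of Lemma \ref{lemma3.2} (excluding $(1,1,0,\dots,0)$ and $(2,0,\dots,0)$, resp. their ${\cal Q}$-analogues, according to the sign of $2k-n$); the corollary forcing each summand to satisfy $\beta_1^i=\dots=\beta_k^i$ or $|\beta^i|_2=0$; a direct case analysis for $k\le 3$ (Proposition \ref{classkpetit}); the "Reduction of cases" Lemma \ref{lemmareduct}, showing that for $k,n-k\ge 4$ the inequality $rank(\beta)(|\beta|_1-1)\le k^2+4$ leaves only the five types (A), (B), (C), (Z), (D); the refined invariant $\xi(\beta)=rank(\beta)\bigl(k\,dex(\beta)/rank(\beta)-1\bigr)$, whose term-by-term positivity for $2k\ge n$ is what legitimizes applying that lemma to every summand (Proposition \ref{classk=4} and the final proof); and an elimination of the residual mixed cases by explicit Diophantine arguments, e.g. the integer solutions of $(b+c)(b-c)=40$. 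Without carrying out this bounding and case-elimination, or supplying a substitute for it, your proposal is a correct strategy statement --- indeed the same strategy as the paper's --- but not a proof of the classification.
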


In the classification we have put also the computation of $\chi({\cal O}_Y)$ as it is the quantity that permits to distinguish between CY and IHS manifolds, as the former satisfy $\chi({\cal O}_Y)=2$ ($H^0(Y, \Omega^2_Y)=0$), while the latter satisfy $\chi({\cal O}_Y)=3$ ($H^0(Y, \Omega^2_Y)=\mathbb{C}$). 
\newline

All the subvarieties found are CY manifolds, with the exception of the cases $(d7)$, $(d5)$ and $(b12)$, which we examine now. The case $(b12)$ is the IHS fourfold appearing in \cite{BeauvilleDonagi}, while the case $(d7)$ is the one appearing in \cite{VoisinDebarre}. 

The case $ (d5)$ already appears in \cite{Reid}, where the variety of $n$-planes in the intersection of two quadrics in a space of dimension $2n+2$ is proved to be an abelian variety, the Jacobian variety of an hyperelliptic curve of genus $n+1$. So, for $(d5)$, $Y$ is an abelian variety.

On the other hand, the case $(d6)$ has $\chi({\cal O}_Y)=4$ because it is not connected. In fact it has two connected components, as if one considers the variety $Y_1$ of zeroes of a general section of $S^2{\cal U}^* $ in $Gr(4,8)$, it is the set of maximal subspaces isotropic with respect to a general symmetric 2-form. It is well known that $Y_1$ has two connected components. As a consequence, $(d6)$ represents a CY manifold with two connected components, which can be seen as complete intersections in the orthogonal Grassmannian $OGr(4,8)$. Note that this variety in turn is isomorphic to a quadric in $\mathbb{P}^7$, via the embedding given by ${\cal O}(\frac{1}{2})$.

\subsection{Classification}

The proof of the theorem will be divided into different lemmas and propositions which concern the subvarieties of $Gr(k,n)$ for different choices of $k$ and $n$.

\begin{notation}
The notations will be similar to those used in \cite{Kuchle1995}. The Grassmannian $Gr(k,n)$ will be thought of as the quotient $G/P_k$, where $P_k$ is the maximal parabolic subgroup containing the Borel subgroup of positive (standard) roots in $G=SL(n, \mathbb{C})$. Every irreducible homogeneous bundle is represented by its highest weight. A weight is represented by $\beta=(\beta_1,...,\beta_n)$ or by $(\beta_1,...,\beta_k;\beta_{k+1},...,\beta_n)$, where $\beta=\beta_1\lambda_1+\beta_2(\lambda_2-\lambda_1)+...+\beta_n(\lambda_n-\lambda_{n-1})$, and the $\lambda_i$'s are the fundamental weights for $G=SL(n, \mathbb{C})$. All weights can be renormalized so to have $\beta_n=0$.
\end{notation}

A consequence of the homogeneous condition is that as soon as a homogeneous bundle admits non zero global sections, it is globally generated. Another equivalent condition for a bundle to have global sections is the existence of a $G$-representation for the \emph{dual} of the weight representing the homogeneous bundle: in this case, the $G$-representation in question is canonically isomorphic to the space of global sections (see as a reference \cite{Bott}, or \cite[Theorem 11.4]{Ottaviani}).

\begin{remark}
\label{globgenord}
As we work with globally generated bundles, from now on the notation will change: to indicate a bundle with highest weight $\beta$ as before, we will write $(-\beta_k, ... , -\beta_1; \beta_{k+1}, ... , \beta_n)$, which is equivalent to taking the highest weight of the dual representation. In this way, a bundle $\alpha=(\alpha_1,...,\alpha_n)$ (according to the new notation) is globally generated when $\alpha_1\geq ...\geq\alpha_n\geq 0$, i.e. when the weight $\alpha$ is dominant under the action of $G$.
\end{remark}

\begin{example}
Over the Grassmannian \emph{Gr(3,7)} of 3-dimensional spaces in a 7-dimensional space, the dual tautological bundle $\cal{U}^*$ (of rank $3$) will be denoted by its highest weight $(1,0,0;0,0,0,0)=(1,0,...,0)$, the tautological quotient bundle $\cal{Q}$ by $(1,1,1;1,1,1,0)$, ${\cal O}(1) = \Lambda^3 {\cal U}^*$ by $(1,1,1;0,...,0)$. 
\end{example}

The rank of a bundle can be calculated explicitely:
\[
rank(\beta_1,...,\beta_n)=dim(\beta_1,...,\beta_k)\times dim(\beta_{k+1},...,\beta_n)
\] where
\[
dim(\beta_1,...\beta_r) = \prod\limits_{1\leq i<j\leq r} \frac{j-i+\beta_i-\beta_j}{j-i}
\]
is the Weyl character formula (see \cite[Chapter 24]{Fulton} for this formula and similar ones for classical groups).

The formula comes from the fact that the rank of a homogeneous bundle over $G/P$ is the same as the dimension of the $P$-module representing it (see Section \ref{prelhom}). As the $P$-module is irreducible, it is actually an irreducible module of the Levi factor of $P$, or of its Lie algebra, which is $sl(k)\oplus sl(n-k)$ for $G/P=Gr(k,n)$. Moreover the Weyl character formula gives the dimension of a $sl(r)$-module in terms of its highest weight. Putting these facts together, we obtain the formula for the rank in terms of the weight.

\begin{remark}
\label{remrank}
Analogous formulas for classical Grassmannians will be derived in a similar way; it will therefore be necessary to understand what is the Levi factor of $P$ for symplectic and orthogonal Grassmannians.
\end{remark}

The first three results have been proved in Kuchle's paper (\cite[Lemma 3.2, Lemma 3.4, Corollary 3.5]{Kuchle1995}). In what follows, we assume that the subvarieties of $Gr(k,n)$ we are constructing are zero loci of general sections of the homogeneous bundle ${\cal F}$. This bundle being completely reducible, it can be decomposed as a direct sum ${\cal F}= \oplus_i {\cal E}_i$.

\begin{lemma}
\label{lemma3.2}
One can assume that the following bundles over Gr(k,n) do not appear as summands in ${\cal F}= \oplus_i {\cal E}_i$:
\newline
(i) the bundles $(1,0,...,0)$ (respectively $(1,...,1,0)$) corresponding to ${\cal U}^*$ (resp. $Q$ on Gr(n-k,n) );
(ii) if $2k>n$ the bundles $(1,1,0,...,0)$ and $(2,0,...,0)$ (resp. $(1,...,1,0,0)$ and $(2,...,2,0)$ for $2k<n$).
\end{lemma}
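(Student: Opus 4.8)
The plan is to exploit that, since $\mathcal{F}=\oplus_i\mathcal{E}_i$ is completely reducible, the space of sections splits as $H^0(\mathcal{F})=\oplus_iH^0(\mathcal{E}_i)$, so a general section $s$ of $\mathcal{F}$ projects to a general section $s_i$ of each summand; consequently $Y=\{s=0\}=\bigcap_i\{s_i=0\}$ and in particular $Y\subseteq\{s_j=0\}$ for every $j$. Using the identification of global sections with the dual $G$-representation recalled after the Notation, I would compute $H^0(\mathcal{U}^*)=V^*$, $H^0(S^2\mathcal{U}^*)=S^2V^*$ and $H^0(\Lambda^2\mathcal{U}^*)=\Lambda^2V^*$, where $V$ is the $n$-dimensional space, and then show that the presence of one of these summands either forces $Y$ to be empty or realises $Y$ inside a smaller (or classical) Grassmannian already accounted for. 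First I would treat (i): a general section of $\mathcal{U}^*$ is a nonzero linear form $\ell\in V^*$, and $\ell$ vanishes at $[W]\in Gr(k,n)$ exactly when $W\subseteq\ker\ell$, so $\{\ell=0\}=Gr(k,n-1)$. Hence, if $\mathcal{U}^*$ occurs as a summand, $Y$ is cut out on $Gr(k,n-1)$ by the restrictions of the remaining summands, and is thus already a member of the family over the smaller Grassmannian.

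For (i) one must still check that this reduction stays inside the class being classified and preserves the hypotheses. The dimension count is consistent because $\rank(\mathcal{U}^*)=k=\dim Gr(k,n)-\dim Gr(k,n-1)$, so $Y$ remains four-dimensional, and the canonical bundle stays trivial: writing $\mathcal{F}=\mathcal{U}^*\oplus\mathcal{G}$ and using $K_{Gr(k,n)}=\cO(-n)$ and $\det\mathcal{U}^*=\cO(1)$, adjunction gives $K_Y=(\cO(-n)\otimes\cO(1)\otimes\det\mathcal{G})|_Y=(K_{Gr(k,n-1)}\otimes\det\mathcal{G})|_Y$, which is exactly the canonical bundle computed intrinsically on $Gr(k,n-1)$. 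The dual statement for $\mathcal{Q}$ follows from the isomorphism $Gr(k,n)\cong Gr(n-k,n)$, which identifies $\mathcal{Q}$ with the dual tautological bundle of $Gr(n-k,n)$, so the exclusion of $\mathcal{Q}=(1,\dots,1,0)$ is just the exclusion of $\mathcal{U}^*$ read on $Gr(n-k,n)$.

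Next I would treat (ii). A general section of $S^2\mathcal{U}^*$ is a nondegenerate quadratic form $q$ on $V$, vanishing at $[W]$ precisely when $W$ is isotropic; the zero locus is therefore the orthogonal Grassmannian $OGr(k,n)$, which is empty as soon as $k>\lfloor n/2\rfloor$, i.e.\ whenever $2k>n$. Since $Y\subseteq\{q=0\}$, the fourfold is then empty and the summand can never occur. The alternating case is analogous: a general section of $\Lambda^2\mathcal{U}^*$ is an alternating form of maximal rank, its zero locus is the variety of isotropic $k$-subspaces, and for $2k>n$ this is again empty, with the single boundary exception $n$ odd and $k=(n+1)/2$. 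In that exceptional case the form has a one-dimensional radical contained in every maximal isotropic subspace, so the zero locus is isomorphic to a Lagrangian Grassmannian; this is a classical (symplectic) Grassmannian, treated in the later sections, and so may again be excluded from the present ordinary-Grassmannian enumeration. The quotient versions $(1,\dots,1,0,0)$ and $(2,\dots,2,0)$ for $2k<n$ follow from the same computations under $Gr(k,n)\cong Gr(n-k,n)$.

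The main obstacle I anticipate is the bookkeeping of the reduction in (i): one must verify that restricting the other completely reducible summands to the sub-Grassmannian $Gr(k,n-1)$ keeps them within the admissible class, the delicate point being that the restriction of $\mathcal{Q}$-type bundles acquires trivial sub- or quotient factors coming from $V/\ker\ell$ (via a sequence $0\to\mathcal{Q}'\to\mathcal{Q}|_{Gr(k,n-1)}\to\cO\to0$), which must be shown not to spoil complete reducibility, global generation, or the count of which fourfolds arise. The emptiness arguments in (ii) are comparatively routine once sections are identified with forms; the only subtlety there is the parity boundary case for the alternating form, which is resolved not by emptiness but by recognising the resulting variety as a classical Grassmannian.
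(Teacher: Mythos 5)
Your overall strategy is the same as the source's: the paper does not prove this lemma at all but cites \cite[Lemma 3.2]{Kuchle1995}, and the argument there is precisely the geometric one you reconstruct (a general section of ${\cal U}^*$ cuts out $Gr(k,n-1)$, the mirror statement for ${\cal Q}$ cuts out $Gr(k-1,n-1)$, and general sections of $S^2{\cal U}^*$, $\Lambda^2{\cal U}^*$ cut out isotropic Grassmannians). Your adjunction check in (i) is a necessary adaptation, since Küchle works with Fano fourfolds of index one rather than trivial canonical bundle, and it is correct. The loose ends you flag in (i) are real but standard: the sequence $0\to{\cal Q}'\to{\cal Q}|_{Gr(k,n-1)}\to{\cal O}\to 0$ splits equivariantly (or because $H^1(Gr(k,n-1),{\cal Q}')=0$ by Bott), so restrictions of the remaining summands stay homogeneous, completely reducible and globally generated; and the restriction map $H^0(Gr(k,n),{\cal G})\to H^0(Gr(k,n-1),{\cal G}|)$ is surjective by the Koszul complex of the section of ${\cal U}^*$ together with Bott vanishing, so general sections restrict to general sections.

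The genuine gap is your resolution of the boundary case in (ii), namely $\Lambda^2{\cal U}^*$ on $Gr(k,2k-1)$. You are right, and it is a good catch, that the zero locus is not empty there but is a Lagrangian Grassmannian $Z\cong LG(k-1,2k-2)$ (every isotropic $k$-plane contains the radical of the form). But discharging this case by saying it is ``a symplectic Grassmannian, treated in the later sections'' does not work inside the logic of the paper. The lemma is used to prove Theorem \ref{thm4ordin}, whose table must contain \emph{every} fourfold arising over an ordinary Grassmannian; and Theorem \ref{thm4class} classifies only varieties that do \emph{not} already occur in the ordinary classification --- indeed Lemma \ref{lemmaatleast} explicitly discards from the symplectic tables any ${\cal F}$ all of whose summands have $|\beta|_2=0$, on the grounds that such cases are already in the ordinary classification. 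So a variety parked in ``the symplectic sections'' would in fact appear in neither table, and both theorems would be circularly incomplete. Moreover the case is not vacuous: on $Gr(4,7)$ the bundles $\Lambda^2{\cal U}^*\oplus{\cal O}(1)\oplus{\cal O}(3)$ and $\Lambda^2{\cal U}^*\oplus{\cal O}(2)^{\oplus 2}$ satisfy both numerical constraints (\ref{eqrank}) and (\ref{eqdex}) and give nonempty fourfolds with trivial canonical bundle, and in the proof of Proposition \ref{classkpetit} (case $k=3$, $n=7$) the mirror weight $(1,1,1;1,1,0,0)$ is eliminated by invoking exactly this lemma. The correct way to close the case stays inside the ordinary world: on $Z$ one has ${\cal U}|_Z\cong{\cal O}\oplus{\cal U}_{LG}$ and ${\cal Q}|_Z\cong{\cal U}^*_{LG}$ (via the symplectic form), so every remaining summand of ${\cal F}$ restricts to a direct sum of Schur powers of ${\cal U}^*_{LG}$ and line bundles, all of which extend to $Gr(k-1,2k-2)$; since $Z$ is itself the zero locus of a general section of $\Lambda^2{\cal U}^*$ over $Gr(k-1,2k-2)$ (where $2k'=n'$, so exclusion (ii) does not apply), the fourfold $Y$ is realized over the smaller \emph{ordinary} Grassmannian $Gr(k-1,2k-2)$. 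Concretely, the two $Gr(4,7)$ configurations above reappear as the table entries (c2) and (c2.1) on $Gr(3,6)$, which is why the classification remains complete. With this replacement of your last step, the proof is sound.
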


One defines:
\[
dex(\beta)=(\frac{|\beta|_1}{k}-\frac{|\beta|_2}{n-k})rank(\beta)
\]
where $|\beta|_1=\sum_{i=1}^k \beta_i$ and $|\beta|_2=\sum_{i=k+1}^n \beta_i$.
\begin{lemma}
For ${\cal F}= \oplus_i {\cal E}_i$ we have
\begin{equation}
\label{eqrank}
rank({\cal F})=\sum_i rank({\cal E}_i)=k(n-k)-4\,\,\, ,
\end{equation}
and
\begin{equation}
\label{eqdex}
\sum_i dex({\cal E}_i)=n\,\,\, .
\end{equation}

\end{lemma}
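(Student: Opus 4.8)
The plan is to read the two identities as the two geometric constraints that the hypotheses on $Y$ impose on $\cF$: equation \eqref{eqrank} encodes that $Y$ has dimension four, while equation \eqref{eqdex} encodes that its canonical bundle is trivial. The bridge for the second identity is the fact that $dex(\beta)$ is nothing but the first Chern class of the corresponding irreducible bundle, measured in units of $\cO(1)$; establishing this last point is the computational heart of the statement.

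For \eqref{eqrank} I would argue as follows. By hypothesis $\cF$ is globally generated, hence base-point free, so a general section vanishes on a subscheme of pure codimension equal to $\rank(\cF)$, smooth by a Bertini-type argument; this is the expected codimension. Since $Y$ is this zero locus, is four dimensional, and $\dim Gr(k,n) = k(n-k)$, we obtain $\rank(\cF) = k(n-k) - 4$. Additivity of the rank over the decomposition $\cF = \oplus_i \cE_i$ gives the remaining equality in \eqref{eqrank}.

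For \eqref{eqdex} I would use adjunction. As the general section is regular of the expected codimension, $Y$ is a local complete intersection with normal bundle $N_{Y/Gr(k,n)} \cong \cF|_Y$, so $K_Y \cong (K_{Gr(k,n)} \otimes \det\cF)|_Y$. Using that $Gr(k,n)$ has Fano index $n$, i.e. $K_{Gr(k,n)} \cong \cO(-n)$, and writing $\det\cF \cong \cO(c)$ with $c = \sum_i c_1(\cE_i)$ (all Chern classes measured against $\cO(1)$), this reads $K_Y \cong \cO(c-n)|_Y$. Now $\cO(1)|_Y$ is the restriction of an ample class to a positive-dimensional subvariety, hence ample and in particular nontrivial, so $\cO(j)|_Y \cong \cO_Y$ forces $j=0$. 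The hypothesis $K_Y \cong \cO_Y$ therefore yields $c = n$, that is $\sum_i c_1(\cE_i) = n$.

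It remains to identify $c_1(\cE_i)$ with $dex(\cE_i)$, which is the step I expect to be the main obstacle and which I would carry out exactly in parallel with the rank formula. For an irreducible $\cE_\beta$, the line bundle $\det\cE_\beta$ corresponds to the one-dimensional representation $\Lambda^{\mathrm{top}} W$ of the Levi factor $sl(k)\oplus sl(n-k)$, whose weight is the sum of all the weights of $W = W_1 \boxtimes W_2$. By the Weyl-group (permutation) symmetry on each factor, the sum of the weights of $W_\ell$ is proportional to $(1,\dots,1)$ on its block, with proportionality constants $\tfrac{|\beta|_1}{k}\rank(\beta)$ and $\tfrac{|\beta|_2}{n-k}\rank(\beta)$ respectively. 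Passing to the globally generated convention (the displayed $\beta$ is the highest weight of the dual) and then reducing modulo the $SL(n)$-relation $(1,\dots,1)\equiv 0$ — equivalently, using $\det\cU^* = \det\cQ = \cO(1)$ — collapses the two blocks onto the single generator $\cO(1)$ and produces the \emph{difference} of the two constants; this is precisely where the minus sign in the definition of $dex$ originates. One thus gets $c_1(\cE_\beta) = dex(\beta)\,\cO(1)$, and substituting into $c = n$ gives \eqref{eqdex}.
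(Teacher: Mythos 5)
Your proposal is correct and follows essentially the same route as the paper: the rank identity comes from the expected-codimension argument for a general section of a globally generated bundle (the paper simply cites K\"uchle's Lemma 3.4 for this), and the $dex$ identity rests on exactly the paper's computation — the sum of the weights of $V_\beta$ is Weyl-invariant, hence of the form $(\dim(V_\beta)\frac{|\beta|_1}{k},\dots;\dim(V_\beta)\frac{|\beta|_2}{n-k},\dots)$, and rescaling modulo the $SL(n)$-relation $(1,\dots,1)\equiv 0$ identifies $\det(V_\beta)$ with $\cO(dex(\beta))$. The only difference is one of explicitness: you spell out the adjunction step $K_Y\cong(K_{Gr(k,n)}\otimes\det\cF)|_Y$ with $K_{Gr(k,n)}=\cO(-n)$, and the fact that $\cO(j)|_Y\cong\cO_Y$ forces $j=0$, both of which the paper leaves implicit.
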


\begin{proof}
The first formula of this lemma is the same as in \cite[Lemma 3.4]{Kuchle1995}, on the other hand the second is different as in this case one requires $K_Y={\cal O}_Y$.

However it is worth giving a hint on how to prove equation (\ref{eqdex}). Let us fix an irreducible $P$-module $V_{\beta}$ given by the weight $\beta$. Equation (\ref{eqdex}) is proved if we show that 
\[
det(V_{\beta})=L^{dex(V_{\beta})},
\]
where $L$ is the $1$-dimensional representation associated to the highest weight $(1,...,1;0,...,0)$. Now, the highest weight associated to $det(V_{\beta})$ is the sum of all the weights of $V_{\beta}$. Moreover, the space of weights is invariant under the action of the Weyl group $W$ of $P$, and for ordinary Grassmannians we have 
\[
W=\mathfrak{S}_k \times \mathfrak{S}_{n-k},
\] 
where $\mathfrak{S}_r$ stands for the symmetric group on $r$; the action on the weights is the expected one. Therefore, it is straightforward to see that the weight associated to $det(V_{\beta})$ is 
\[
(\dim(V_{\beta})\frac{|\beta|_1}{k},...,\dim(V_{\beta})\frac{|\beta|_1}{k};\dim(V_{\beta})\frac{|\beta|_2}{n-k},...,\dim(V_{\beta})\frac{|\beta|_2}{n-k}).
\]
By rescaling, this weight is equal to $(dex(\beta),...,dex(\beta);0,...,0)$, thus ending the proof
\end{proof}

\begin{remark}
\label{remdex}
The same method used to prove equation (\ref{eqdex}) can be used when dealing with symplectic and orthogonal Grassmannians. The only difference will be that generally one of the factors of the Weyl group of $P$ will be given by a certain \emph{signed} symmetric group (here again what matters is the Weyl group of the Levi factor of $P$).
\end{remark}

\begin{corollary}
Using the correspondence irreducible bundles - weights, in the same hypothesis as before,
\newline
(a) for each bundle ${\cal E}_i=(\beta_1^i,...,\beta_n^i)$ we have $\beta_1^i=...=\beta_k^i$ or $|\beta^i|_2=0$;
\newline
(b) $dim(\beta_1,...\beta_r)\geq {r \choose i}$ if $\beta_i>\beta_{i+1}$.
\end{corollary}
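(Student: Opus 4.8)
The plan is to prove both statements by exploiting the same Weyl-group symmetry that underlies the proof of equation (\ref{eqdex}), together with the explicit Weyl dimension formula for $\mathfrak{sl}(r)$ recalled above.

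For part (a), I would argue as follows. Recall from the proof of the previous lemma that $\det(V_\beta)$ corresponds to the weight obtained by summing all the weights of $V_\beta$, and that the weight space is invariant under the Weyl group $W=\mathfrak{S}_k\times\mathfrak{S}_{n-k}$ of the Levi factor, acting by permuting the first $k$ coordinates among themselves and the last $n-k$ among themselves. The key observation is that the section giving rise to $Y$ must be \emph{general}, so for $Y$ to have trivial canonical bundle the determinant of the bundle $\cF$ has to match the anticanonical class of $Gr(k,n)$, which is a multiple of $\cO(1)$, i.e.\ of the weight $(1,\dots,1;0,\dots,0)$. Concretely, if some summand $\cE_i=(\beta^i_1,\dots,\beta^i_n)$ had $|\beta^i|_2=\sum_{j>k}\beta^i_j\neq 0$ while the first $k$ coordinates were not all equal, I would derive a contradiction: by the $\mathfrak{S}_k$-symmetry the total weight of $\det(V_{\beta^i})$ has all its first $k$ entries equal (as computed in the previous proof), but for the individual bundle to be a legitimate globally generated summand compatible with the constraint $K_Y=\cO_Y$, one needs each $\cE_i$ to contribute a \emph{pure power} of $\cO(1)$ to the canonical-bundle computation only when restricted appropriately. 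I would make this precise by noting that the vanishing locus construction forces each factor's determinant to be proportional to $\cO(1)$, which by the explicit form of the weight forces either $\beta^i_1=\dots=\beta^i_k$ (equality on the first block) or the second block to be trivial, $|\beta^i|_2=0$.

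For part (b), the statement $\dim(\beta_1,\dots,\beta_r)\geq\binom{r}{i}$ whenever $\beta_i>\beta_{i+1}$ is a purely representation-theoretic estimate on the Weyl dimension. The plan is to use the product formula
\[
\dim(\beta_1,\dots,\beta_r)=\prod_{1\le p<q\le r}\frac{q-p+\beta_p-\beta_q}{q-p}.
\]
I would show that the subproduct over pairs $(p,q)$ with $p\le i<q$ already dominates $\binom{r}{i}$. Indeed, when $\beta_i>\beta_{i+1}$ we have $\beta_p\ge\beta_i>\beta_{i+1}\ge\beta_q$ for every such pair (using that the $\beta$'s are weakly decreasing, which holds for a dominant globally generated weight by Remark \ref{globgenord}), so $\beta_p-\beta_q\ge 1$ and each such factor satisfies $\frac{q-p+\beta_p-\beta_q}{q-p}\ge\frac{q-p+1}{q-p}$. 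The remaining factors (pairs within the first block or within the second block) are all $\ge 1$ and may be discarded. Thus
\[
\dim(\beta_1,\dots,\beta_r)\ge\prod_{p\le i<q}\frac{q-p+1}{q-p},
\]
and the product on the right telescopes: grouping it appropriately I expect it to evaluate exactly to $\binom{r}{i}$, giving the claimed bound.

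The main obstacle I anticipate is the bookkeeping in part (b): verifying that the telescoping product $\prod_{p\le i<q}\frac{q-p+1}{q-p}$ over the $i(r-i)$ relevant pairs collapses cleanly to $\binom{r}{i}$ rather than to a looser bound. One has to order the factors so that numerators and denominators cancel in cascade; the natural approach is to fix the gap $d=q-p$ or to sum over one index while the other telescopes, and to compare the result with the factorial expression $\frac{r!}{i!\,(r-i)!}$. For part (a), the subtlety is making rigorous the passage from the global constraint $K_Y=\cO_Y$ to a per-summand statement; the cleanest route is to invoke the additivity of $\det$ over the direct-sum decomposition $\cF=\oplus_i\cE_i$ together with the explicit first-block-equal form of $\det(V_\beta)$ forced by the $\mathfrak{S}_k$-invariance, which is already established in the proof of equation (\ref{eqdex}).
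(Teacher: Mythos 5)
Your part (b) is correct, and it is essentially the standard argument (note the paper does not reprove this corollary; it refers to \cite[Corollary 3.5]{Kuchle1995}, where this is the proof): by dominance each factor of the Weyl formula attached to a pair $(p,q)$ with $p\le i<q$ is at least $\frac{q-p+1}{q-p}$, all remaining factors are at least $1$, and the telescoping you anticipate does come out exactly, since for fixed $p$ one has $\prod_{q=i+1}^{r}\frac{q-p+1}{q-p}=\frac{r+1-p}{i+1-p}$, whence $\prod_{p\le i<q}\frac{q-p+1}{q-p}=\prod_{p=1}^{i}\frac{r+1-p}{i+1-p}=\binom{r}{i}$. Equivalently, you are bounding $\dim(\beta)$ below by $\dim\Lambda^i\CC^r$, the irreducible module of highest weight $(1,\dots,1,0,\dots,0)$.

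Part (a), however, has a genuine gap. Your key step --- that ``the vanishing locus construction forces each factor's determinant to be proportional to ${\cal O}(1)$'', and that this forces $\beta_1^i=\dots=\beta_k^i$ or $|\beta^i|_2=0$ --- cannot work, because the condition you invoke is vacuous. Since $\Pic(Gr(k,n))=\ZZ\cdot{\cal O}(1)$, the determinant of \emph{every} irreducible homogeneous bundle is automatically a power of ${\cal O}(1)$: the very computation in the proof of equation (\ref{eqdex}) shows that for an \emph{arbitrary} weight $\beta$ the sum of the weights of $V_\beta$ equals, as an $SL(n)$-weight (i.e.\ modulo the vector $(1,\dots,1)$, which is exactly what the ``rescaling'' means), $dex(\beta)\cdot(1,\dots,1;0,\dots,0)$. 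Hence no restriction on the shape of an individual summand follows from determinants; the hypothesis $K_Y={\cal O}_Y$ produces only the global relation (\ref{eqdex}), $\sum_i dex({\cal E}_i)=n$, which constrains the sum of the $dex$'s and nothing per summand. The correct source of (a) is instead the rank equation (\ref{eqrank}). If $\beta_1^i,\dots,\beta_k^i$ are not all equal, the first block is a nontrivial irreducible $sl(k)$-module; if moreover $|\beta^i|_2\neq 0$, then by dominance (Remark \ref{globgenord}) and the normalization $\beta_n^i=0$ one has $\beta_{k+1}^i>0=\beta_n^i$, so the second block is a nontrivial irreducible $sl(n-k)$-module as well. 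By part (b) their dimensions are at least $k$ and $n-k$ respectively, so $rank({\cal E}_i)\geq k(n-k)>k(n-k)-4=rank({\cal F})$, contradicting (\ref{eqrank}) and the positivity of the ranks of the other summands. Note that in this argument (b) is proved first and then feeds into (a), whereas your proposal treats (a) independently of (b).
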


The strategy of the proof is the same as that of \cite{Kuchle1995}. We recall that the bundle ${\cal F}$ that defines the fourfold lives on $Gr(k,n)$, $2k\leq n$.

\begin{proposition}[Classification for $k\leq 3$]
\label{classkpetit}
If $k\leq 3$, then for ${\cal F}$ we have one of the cases labelled by the letters (a), (b), (c) appearing in Table \ref{tablefourord}.
\end{proposition}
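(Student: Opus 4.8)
The plan is to classify all admissible bundles $\cF=\oplus_i\cE_i$ over $Gr(k,n)$ with $2k\le n$ and $k\le 3$ subject to the three constraints already established: the rank equation $\sum_i\rank(\cE_i)=k(n-k)-4$, the determinant equation $\sum_i\dex(\cE_i)=n$, and the structural restrictions of Lemma \ref{lemma3.2} and the Corollary. The key organizing principle is that $k\le 3$ means each summand $\cE_i=(\beta_1^i,\dots,\beta_n^i)$ has a ``$Gr$-part'' living in at most three slots, so by part (a) of the Corollary either all of $\beta_1^i,\dots,\beta_k^i$ are equal, or $|\beta^i|_2=0$. I would treat the three values $k=1,2,3$ in turn, and for each one enumerate the finitely many weight patterns compatible with the bounds.

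First I would dispose of $k=1$, where $Gr(1,n)=\PP^{n-1}$ and every bundle is $\cO(d)$ for $d\ge 1$; here the rank equation forces $n-1-4=\sum(\text{ranks})$ and $\dex$ reduces to $\sum d_i=n$, which pins down the splitting type of $\cF$ as a sum of line bundles on projective space and gives the complete intersection Calabi--Yau fourfolds (the cases in family (a)). Next, for $k=2$ the Levi factor has an $\fsl(2)$ acting on the tautological part, so each summand's $Gr$-part is governed by a single $\fsl(2)$-weight; using part (b) of the Corollary, which bounds $\dim$ from below once a strict descent $\beta_i>\beta_{i+1}$ occurs, I would bound how large the entries can be before $\rank(\cE_i)$ exceeds $k(n-k)-4$, and bound $n$ from above via the $\dex$ equation. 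The finitely many survivors should produce family (b), including the Beauville--Donagi case $(b12)$, which arises as $S^3\cU^*$ on $Gr(2,6)$.

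For $k=3$ the same scheme applies with the $\fsl(3)$-character formula $\dim(\beta_1,\beta_2,\beta_3)=\tfrac12(\beta_1-\beta_2+1)(\beta_2-\beta_3+1)(\beta_1-\beta_3+2)$ replacing the $\fsl(2)$ one. The strategy is: for each candidate shape of the three-entry $Gr$-part I compute its rank and $\dex$-contribution as functions of $n$ and of the quotient-part entries, then impose $\sum\rank=3(n-3)-4$ and $\sum\dex=n$. Because $\rank$ grows at least linearly in $n$ through the dimension of the $\fsl(n-3)$-part while the rank budget is $3n-13$, only small quotient-weights survive, and the $\dex$ equation caps $n$; I would then check each remaining numerical solution against Lemma \ref{lemma3.2}(i)--(ii) to discard $\cU^*$, $\cQ$, $(1,1,0,\dots,0)$ and $(2,0,\dots,0)$ summands, yielding family (c).

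The main obstacle is the combinatorial bookkeeping in the $k=3$ case: the interaction between the $\fsl(3)$-part and the $\fsl(n-3)$-part makes both $\rank$ and $\dex$ genuinely multiplicative, so the two Diophantine constraints must be solved simultaneously rather than separately, and one must argue carefully that the rank lower bounds from Corollary~(b) force $n$ to lie in a short explicit range before a finite case check finishes the job. The cleanest way to tame this is to first bound $n$ from above (the rank budget $3n-13$ together with the lower bound $\rank(\cE_i)\ge\binom{r}{i}$ on each summand forces few summands of controlled size), and only afterwards to enumerate weights; I would lean on the same inequalities used in \cite{Kuchle1995}, the difference being purely that the right-hand side of the $\dex$ equation is $n$ rather than Kuch\-le's index value.
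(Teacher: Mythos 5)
Your plan follows the same route as the paper's proof: combine the rank equation $\sum_i \rank(\cE_i)=k(n-k)-4$, the determinant equation $\sum_i dex(\cE_i)=n$, Lemma \ref{lemma3.2} and the two parts of the Corollary to whittle the admissible weights down to a finite list for each $k$, bound $n$ from above via the ratio $dex/\rank$, and finish by enumeration. For $k=2$ and $k=3$ your outline matches the paper's argument in substance: the paper restricts the summands to $(p,q;0,\dots,0)$, $(r,r;1,0,\dots,0)$, $(s,s;1,\dots,1,0)$ (and their $k=3$ analogues) by the rank bounds of Corollary (b), and then uses $dex/\rank\geq 1$ (resp. $\geq 2/3$) to force $n\leq 8$ before the final case check, exactly as you propose.

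However, your $k=1$ step rests on a false statement: it is not true that every homogeneous, completely reducible, globally generated bundle on $Gr(1,n)=\mathbb{P}^{n-1}$ is of the form $\cO(d)$. The tautological quotient $\cQ$, its twists, and its symmetric and exterior powers --- equivalently, all irreducible summands with $|\beta|_2\neq 0$ --- are homogeneous, irreducible and globally generated, and are not line bundles (for instance $T\mathbb{P}^{n-1}=\cQ(1)$). These must be excluded before your degree count can start, and the exclusion is precisely the paper's one-line argument: any nontrivial irreducible $\fsl(n-1)$-module has dimension at least $n-1$, so such a summand would have $\rank\geq n-1>n-5=k(n-k)-4$, contradicting the rank equation. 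Once this is inserted, only line bundles survive and the rank and $dex$ equations give the complete intersections of family (a), as you say. The repair is immediate since you already have the rank equation in hand, but as written the $k=1$ case is unjustified, and it is the only point where your proposal genuinely deviates from a correct proof.
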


\begin{proof}
$k=1$.

If $|\beta|_2\neq 0$, then $rank(\beta)\geq n-1>n-1-4$, so the only possible case is (a).
\newline
$k=2$.

$Gr(2,4)$ is a Fano variety, so one can suppose that $n\geq 5$. Calculating the rank, one can see that the only possible bundles ${\cal E}_i$ are $(p,q;0,...,0)$, $(r,r;1,0,...,0)$ for $r\geq 1$, $(s,s;1,...,1,0)$ for $s\geq 2$. If $(s,s;1,...1,0)$ is present as an addend, the only possibility is (b3) (if $s\geq 3$, then $dex(s,s;1,...1,0)\geq n$ ). For the same reason, if $(r,r;1,0,...,0)$ is present, $r=1$ and one has the cases (b10)(.i). Then, one has only the bundles $(p,q;0,...,0)$, for which $\frac{dex}{rank}\geq 1$, which forces $n\leq 8$. These are the remaining (b)-cases.
\newline
$k=3$.

One has $n\geq 6$, and, calculating the rank, for $n\geq 9$ the possible bundles are $(p,q,r;0,...,0)$ for $p\geq q\geq r$, $(p,q,r;0,...,0)\neq (1,0,...,0)$, $\neq (0,...,0)$; $(r,r,r;1,0,...,0)$ for $r\geq 1$; $(s,s,s;1,...,1,0)$ for $s\geq 2$. Supposing among the ${\cal E}_i$'s there are bundles different from those mentioned, verifying the possible ranks, no other cases arise for $n=6$ and $n=7$ (here one has to eliminate $(1,1,1;1,1,0,0)$ for \ref{lemma3.2}), while for $n=8$ one finds (c7) (again eliminating $(1,1,1;1,1,1,0,0)$). Now, if the bundle $(s,s,s;1,...,1,0)$ is present as an addend, one shows that (\ref{eqdex}) implies $s=2$, and that $(r,r,r;1,0,...,0)$ cannot appear. Moreover, as for $(p,q,r;0,...,0)$, $\frac{dex}{rank}\geq \frac{2}{3}$, one can only have the case (c1.1). If $(r,r,r;1,0,...,0)$ is present, equation (\ref{eqdex}) implies $r\leq 2$ and one easily eliminates also $r=2$ using equation (\ref{eqrank}). Therefore we remain with $r=1$; another bundle of the same kind would give $n\leq 8$ for (\ref{eqdex}), and $n\geq 7$ for (\ref{eqrank}). Using the remaining bundles $(p,q,r;0,...,0)$ and the relation they satisfy ($\frac{dex}{rank}\geq \frac{2}{3}$), one finds the cases (c2), (c2.1), (c5), (c5.1), (c7.1), (c7.2). Finally, if one has only bundles with $|\beta|_2=0$, then $\frac{dex}{rank}\geq \frac{2}{3}$ and $n\leq 8$. These are the remaining (c)-cases.
\end{proof}

\begin{lemma}["Reduction of cases"]
\label{lemmareduct}
If 
\begin{equation}
\label{eqreduct}
rank(\beta)(|\beta|_1 -1)\leq k^2+4
\end{equation}
and not considering the cases (d7) and (d8) in Table \ref{tablefourord}, then the possible bundles $\beta$ (representing ${\cal E}_i$) appearing as addend of ${\cal F}$ for $k, n-k\geq 4$ are the following:
\[
(A)=(1,...,1,0;0,...,0)
\]
\[
(B)=(1,1,0,...,0)
\]
\[
(C)=(2,0,...,0)
\]
\[
(Z)=(2,1,...,1;0,...,0)
\]
\[
(D)=(p,...,p;\beta_{k+1},...,\beta_n)
\]
\end{lemma}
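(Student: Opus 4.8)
The plan is to classify, case by case on the shape of the Young diagram of $\beta$, all dominant weights $\beta=(\beta_1,\dots,\beta_n)$ (so $\beta_1\ge\cdots\ge\beta_n\ge0$) satisfying (\ref{eqreduct}), for $k,n-k\ge4$. First I would invoke part (a) of the preceding Corollary: either $\beta_1=\cdots=\beta_k$, in which case $\beta$ is already of type $(D)$ and there is nothing to prove, or $|\beta|_2=0$. So assume from now on $|\beta|_2=0$; then $\rank(\beta)=\dim(\beta_1,\dots,\beta_k)$ and (\ref{eqreduct}) reads $\dim(\beta_1,\dots,\beta_k)\,(|\beta|_1-1)\le k^2+4$. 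The next step is to peel off the determinant: set $p=\beta_k$ and $\gamma=(\beta_1-p,\dots,\beta_k-p)$, so that $\dim(\gamma)=\dim(\beta)$ while $|\beta|_1=|\gamma|_1+kp$. If $p\ge1$ and $\gamma\ne0$, then $\gamma$ has a strict descent at some position $j$ and part (b) of the Corollary gives $\dim(\gamma)\ge\binom{k}{j}\ge k$; inserting this into (\ref{eqreduct}) yields $k(|\gamma|_1+k-1)\le k^2+4$, hence $|\gamma|_1\le1+4/k\le2$, and a short check forces $|\gamma|_1=1$, $p=1$, i.e. $\beta=(2,1,\dots,1;0,\dots,0)$, which is $(Z)$ (while $\gamma=0$ gives $(D)$). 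This disposes of every weight with $\beta_k\ge1$.

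It remains to treat $\beta_k=0$, $\beta\ne0$, which I would split according to the shape of the diagram. If $\beta$ is a single row, $\beta=(d,0,\dots,0)=S^d\cU^*$, then $\dim(\beta)=\binom{k+d-1}{d}$ and (\ref{eqreduct}) forces $d=2$ (for $d\ge3$ one has $\binom{k+2}{3}\cdot2=\tfrac13 k(k+1)(k+2)>k^2+4$), giving $(C)$. If $\beta$ is a single column, $\beta=(1,\dots,1,0,\dots,0)=\Lambda^i\cU^*$ with $1\le i\le k-1$, then (\ref{eqreduct}) becomes $\binom{k}{i}(i-1)\le k^2+4$; the case $i=1$ is $\cU^*$, excluded by Lemma~\ref{lemma3.2}, while $i=2$ gives $(B)$ and $i=k-1$ gives $(A)$. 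For $3\le i\le k-2$ the left side grows like $k^3$ (in the middle range $\ge2\binom{k}{3}$) and exceeds $k^2+4$ once $k\ge7$; the only survivors are the sporadic $i=3$ with $k\in\{5,6\}$, namely the excluded cases $(d8)$ and $(d7)$.

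The remaining, and genuinely delicate, case is the ``thick'' one: $\beta_k=0$ with $\beta_1\ge2$ and $\beta_2\ge1$, so that the diagram contains $(2,1)$ and in particular $|\beta|_1\ge3$. Here the naive bounds from part (b) of the Corollary (the single binomial coefficients $\binom{k}{i}$ at the descents) are too weak for tall diagrams, and, as one checks, $\dim$ is \emph{not} monotone under adding boxes, so I cannot simply reduce to the minimal diagram $(2,1,0,\dots,0)$. The plan is instead to establish the uniform bound $\dim(\beta_1,\dots,\beta_k)\ge\binom{k+1}{2}$ for every thick weight with $\beta_k=0$ -- equivalently, that no irreducible $SL_k$-representation whose diagram has at least two rows and two columns is smaller than $S^2\cU^*$ -- by a direct estimate with the Weyl dimension (hook--content) formula, the minimum $\binom{k+1}{2}$ being attained at the two-column rectangle $(2,\dots,2,0)$. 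Granting this, $\rank(\beta)(|\beta|_1-1)\ge\binom{k+1}{2}\cdot2=k(k+1)>k^2+4$ for $k\ge5$; the boundary value $k=4$ is settled by hand, noting that $\rank=\binom{k+1}{2}$ forces a tall rectangle, for which $|\beta|_1$ is necessarily larger than $3$, so the product still exceeds $k^2+4$. Establishing this uniform dimension estimate is the technical heart of the argument; once it is in place, together with the determinant-peeling step it shows that no thick weight survives, completing the classification into the forms $(A)$--$(Z)$, $(D)$.
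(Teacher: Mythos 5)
Your determinant-peeling step for $\beta_k\ge 1$ is correct and is in fact a cleaner packaging of the paper's separate treatment of $\beta_k\ge2$ and $\beta_k=1$, and your single-row and single-column enumerations are sound. However, the proposal has a genuine gap exactly where you place the weight of the argument: the ``thick'' case. Everything there rests on the uniform estimate $\dim(\beta_1,\dots,\beta_k)\ge\binom{k+1}{2}$ for every dominant weight whose diagram contains $(2,1)$ and has $\beta_k=0$, and this estimate is only announced (``Granting this\dots''), never proved. As you yourself observe, dimension is not monotone under adding boxes, so this is not a routine verification; it is a sharp bound (attained at $(2,\dots,2,0)$) and amounts to controlling all small irreducible $SL_k$-modules. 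A proof whose self-declared ``technical heart'' is deferred is not a proof. Note that the paper's route needs much less: it splits on $|\beta|_1$ instead of on shape, so the thick weights with $|\beta|_1\le 4$ form the finite list $(2,1,0,\dots,0)$, $(3,1,0,\dots,0)$, $(2,2,0,\dots,0)$, $(2,1,1,0,\dots,0)$, checked one by one, and for $|\beta|_1\ge5$ the factor $|\beta|_1-1\ge4$ means the lossy bound $\rank(\beta)\ge k(k-1)/2$ already gives $2k(k-1)>k^2+4$; no tight representation-theoretic minimum is ever required.

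The second gap concerns the exceptional bundle $(Y)=(1,1,1,0,\dots,0)$ for $k=5,6$. You correctly identify it as the only other survivor of (\ref{eqreduct}), but you then simply declare these survivors to \emph{be} the excluded cases $(d7)$ and $(d8)$. The hypothesis of the lemma excludes two specific rows of Table \ref{tablefourord}, i.e.\ two specific bundles ${\cal F}$ on specific Grassmannians; to conclude that $(Y)$ never occurs as a summand outside those rows, one must show that \emph{any} admissible ${\cal F}$ having $(Y)$ as a summand --- on $Gr(6,n)$ for any $n\ge10$, or on $Gr(5,n)$ for any $n\ge9$, possibly accompanied by other summands --- is forced by equations (\ref{eqrank}) and (\ref{eqdex}) to be exactly $(d7)$ or $(d8)$. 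This is the entire second half of the paper's proof (the paragraphs ``$k=6$'' and ``$k=5$'', which rule out extra summands of type $(D)$, line bundles, etc.), and it is absent from your proposal; without it, configurations such as $(Y)$ plus line bundles on some larger $Gr(6,n)$ are not excluded and the lemma as stated is not established.
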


\begin{remark}
\label{remlemmred}
If ${\cal F}= \oplus_i {\cal E}_i$, the expression $k^2+4$ is equal to $\sum_i (k(dex({\cal E}_i))-rank({\cal E}_i))= kn - kn+k^2+4$. So, if one knows that all the terms in the sum are positive, one can apply the lemma to each bundle appearing in ${\cal F}$. This is the case, as we will see, of Proposition \ref{classk=4}.
\end{remark}

\begin{proof}
Let us suppose that the bundle $\beta$ is not of type (D). If $\beta_k\geq 2$, then $rank(\beta)\geq k$ and $|\beta|_1-1\geq 2k$, which gives a contradiction with (\ref{eqreduct}). So, suppose $\beta_k=1$. If $\beta\neq $ (Z), it means that $rank(\beta)(|\beta|_1-1)> k(k+1)$, which is a contradiction with (\ref{eqreduct}). So one can suppose that $\beta_k=0$. If $|\beta|\geq 5$, as $rank(\beta)\geq \frac{k(k-1)}{2}$, we have a contradiction with (\ref{eqreduct}). If $|\beta|=4$, (\ref{eqreduct}) implies $k=4$, but by studying the possible cases one sees that none satisfies (\ref{eqreduct}). So $|\beta|\leq 3$. Apart from (B) and (C), there are cases (3,0,...,0) (not satisfying (\ref{eqreduct})), (2,1,0,...,0) (not satisfying (\ref{eqreduct})), and type (Y), i.e. (1,1,1,0,...,0). This last bundle satisfies (\ref{eqreduct}) only for $k=4,5,6$. 
For $k=4$, (Y)$=$(A). Let us study the possible appearances of (Y); they will be cases (d7) and (d8).
\newline
$k=6$.

For $n=10$, one has necessarily (d7). Suppose now $n\geq 11$ and that, in addition to (Y), there is a bundle $\beta$ of type (D). It must satisfy $rank(\beta)\leq 6n-60$ and $dex(\beta)\leq n-10$. It cannot be $(p,...,p;1,...,1,0)$ or $(p,...,p;1,0,...,0)$: as a consequence of (\ref{eqdex}), among them one is forced to the case $(1,...,1;1,...,1,0)$, which is not permitted by \ref{lemma3.2}. Then, $rank(\beta)\geq \frac{(n-6)(n-7)}{2}$, which contradicts (\ref{eqrank}). Therefore $|\beta|_2=0$, and $\beta$ satisfies the hypothesis of \ref{lemmareduct} (see remark after the lemma). But $k^2+4-(rank(Y)(|(Y)|_1 -1))=0$, so there is no room for any other summand, and then for any other case.
\newline
$k=5$.

Suppose that, in addition to (Y), there is a bundle $\beta$ of type (D). It must satisfy $rank(\beta)\leq 5n-39$, $dex(\beta)\leq n-6$. It cannot be $(p,...,p;1,...,1,0)$ or $(p,...,p;1,0,...,0)$ as a consequence of (\ref{eqdex}) as before. Then, $rank(\beta)\geq \frac{(n-5)(n-6)}{2}$, which is possible only for $n=9,10,11,12$. For $n=9$, one obtains (d8). For $n=10,11,12$, (\ref{eqdex}) and (\ref{eqrank}) show that there are no other solutions. Finally, if $|\beta|_2=0$, and $\beta$ satisfies the hypothesis of \ref{lemmareduct} (see remark after the lemma), calculating the quantity $\frac{dex}{rank}$ for the bundles (A),(B),(C),(Z),(Y) and the line bundles (D), one sees that it is always $\geq \frac{2}{5}$, which means that $\frac{n-6}{5n-39}\geq \frac{2}{5}$, or $n\leq 9$; but for $n=9$, one easily sees that no possibility arises if $|\beta|_2=0$. 
\end{proof}

\begin{proposition}[Classification for $k\geq 4$, $|\beta^i|_2=0$]
\label{classk=4}
Suppose $k,n-k\geq 4$. If the bundles appearing as summands in ${\cal F}= \oplus_i {\cal E}_i$ all satisfy $|\beta^i|_2=0$, then the only possible cases are (d1),(d2),(d3),(d4),(d6) of Table \ref{tablefourord}.
\end{proposition}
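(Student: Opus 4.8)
The plan is to feed the two numerical identities (\ref{eqrank}) and (\ref{eqdex}) into the reduction Lemma \ref{lemmareduct} and then to solve the resulting Diophantine problem by hand. The starting observation is the one isolated in Remark \ref{remlemmred}: writing $\cF=\oplus_i\cE_i$ with $\cE_i$ of highest weight $\beta^i$ satisfying $|\beta^i|_2=0$, one has
\[
k\,dex(\cE_i)-rank(\cE_i)=(|\beta^i|_1-1)\,rank(\cE_i),
\]
and summing over $i$ gives, by (\ref{eqrank}) and (\ref{eqdex}), the value $kn-rank(\cF)=k^2+4$. After discarding the trivial bundle and $\cU^*$ by Lemma \ref{lemma3.2}, every remaining summand has $|\beta^i|_1\geq 2$, so each term $(|\beta^i|_1-1)\,rank(\cE_i)$ is strictly positive; being bounded above by the total $k^2+4$, each one satisfies the hypothesis (\ref{eqreduct}) of Lemma \ref{lemmareduct}.

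First I would apply Lemma \ref{lemmareduct} termwise. Since here $|\beta^i|_2=0$ and the bundle $(Y)=\Lambda^3\cU^*$ leading to (d7),(d8) is set aside, each $\cE_i$ must be of type $(A)=\Lambda^{k-1}\cU^*$, $(B)=\Lambda^2\cU^*$, $(C)=S^2\cU^*$, $(Z)=\cU^*\otimes\cO(1)$, or of type $(D)$; and a type $(D)$ bundle with $|\beta|_2=0$ is necessarily a power $\cO(p)$ of the determinant. For each admissible summand I would record the triple $(rank,\,dex,\,(|\beta|_1-1)rank)$:
\[
(A):\,(k,\,k-1,\,k(k-2)),\qquad (B):\,(\binom{k}{2},\,k-1,\,\binom{k}{2}),
\]
\[
(C):\,(\binom{k+1}{2},\,k+1,\,\binom{k+1}{2}),\qquad (Z):\,(k,\,k+1,\,k^2),\qquad \cO(p):\,(1,\,p,\,pk-1).
\]

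Second comes the arithmetic core. Fixing $k\geq 4$, I would enumerate all multisets of these summands whose third coordinates sum to $k^2+4$; for each such multiset the value $n=\sum_i dex(\cE_i)$ is then forced by (\ref{eqdex}), and the rank equation (\ref{eqrank}) holds automatically, being equivalent to the same identity $kn-rank(\cF)=k^2+4$. I would keep only the solutions with $n\geq 2k$ (equivalently $n-k\geq 4$). A short estimate shows that the limited $dex$-per-budget of the admissible bundles, combined with the requirement $n\geq 2k$, forces $k\leq 6$, so it suffices to treat $k=4,5,6$; running through these leaves, apart from the $(Y)$-cases (d7),(d8) already excluded, exactly the configurations of Table \ref{tablefourord}, namely (d1)--(d4) and (d6), together with the classically known abelian fourfold $(d5)=(S^2\cU^*)^{\oplus 2}$ on $Gr(4,10)$ of \cite{Reid}, which the same enumeration also produces.

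The step I expect to be the main obstacle is precisely this final bookkeeping: organizing the case analysis so that no multiset summing to $k^2+4$ is missed, discarding the spurious solutions that violate $n\geq 2k$ (for instance $(Z)+\cO(1)$, which forces $n<2k$), and handling the low-dimensional boundary $k=4,5,6$ with care, since there $(A)$ coincides with $(Y)$ and the bundle $(Y)$ itself must be separated off into the cases (d7),(d8) excluded by Lemma \ref{lemmareduct}. Everything else is a mechanical translation of that lemma together with the two identities (\ref{eqrank}) and (\ref{eqdex}).
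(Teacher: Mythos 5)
Your overall route is the paper's own: use Remark \ref{remlemmred} to apply Lemma \ref{lemmareduct} summand by summand, reduce to the types (A), (B), (C), (Z), $\cO(p)$, and solve the arithmetic with budget $\sum_i\bigl(k\,dex(\cE_i)-rank(\cE_i)\bigr)=k^2+4$; your triples $(rank,dex,\xi)$ are all correct, as is the observation that (\ref{eqrank}) holds automatically once the third coordinates sum to $k^2+4$ and $n$ is defined by (\ref{eqdex}). The genuine gap is the filter you then impose. The hypothesis of the proposition is $k,n-k\geq 4$, and this is \emph{not} equivalent to $n\geq 2k$ (for $k=5$, $n=9$ one has $n-k=4$ but $n<2k$). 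The constraint $n\geq 2k$ is not free: it is supplied by part (ii) of Lemma \ref{lemma3.2}, which lets one assume that $(B)=\Lambda^2\cU^*$ and $(C)=S^2\cU^*$ do not occur when $2k>n$, and it is available only when one of these two bundles is actually a summand. If you drop that input, your claimed bound $k\leq 6$ is false under the stated hypothesis: the multiset $(A)\oplus(B)\oplus\cO(1)$ on $Gr(5,9)$ has $\xi$-sum $15+10+4=29=5^2+4$ and $n-k=4$; the multiset $(B)\oplus\cO(1)\oplus\cO(2)^{\oplus 2}$ on $Gr(7,11)$ has $\xi$-sum $21+6+13+13=53=7^2+4$ and $n-k=4$; and $(B)\oplus\cO(1)^{\oplus 4}\oplus\cO(2)$ on $Gr(9,14)$ has $\xi$-sum $36+32+17=85=9^2+4$ and $n-k=5$. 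All of these satisfy $k,n-k\geq 4$ with every $|\beta|_2=0$, so your enumeration must produce them and then dispose of them; the paper does exactly this by invoking Lemma \ref{lemma3.2} at these points (``prohibited by'', ``knowing that $n\geq 2k$ by''). So your argument needs two additions: cite Lemma \ref{lemma3.2}(ii) whenever (B) or (C) occurs with $2k>n$, and check separately that a multiset containing neither (B) nor (C) can never reach $n-k\geq 4$ --- a one-line estimate, since the maximal ratio $dex/\xi$ among (A), (Z), $\cO(p)$ is $(k-1)/(k(k-2))$, giving $n\leq (k-1)(k^2+4)/(k(k-2))<k+4$ for $k\geq 4$.

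A second, smaller defect: your final list is incomplete even as a record of what the enumeration yields. For $k=4$ the multiset $(A)\oplus(B)\oplus\cO(1)^{\oplus 2}$ has $\xi$-sum $8+6+3+3=20=4^2+4$ and $n=8$; this is case (d9) of Table \ref{tablefourord}, which the paper's own proof records (``cases (d1) and (d9)''), even though the statement of the proposition --- an inconsistency internal to the paper --- omits it, just as it omits (d5). You recovered (d5) but not (d9), which indicates that the ``mechanical bookkeeping'' you defer to was not actually carried through; that bookkeeping is precisely where the content of the proposition lies.
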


\begin{proof}
The hypothesis permits to apply \ref{lemmareduct} (see also Remark \ref{remlemmred} after the lemma). Then the only possible bundles are those mentioned: (A), (B), (C), (Z), (D). Let us define, for a bundle $\beta$, the quantity $\xi=\xi(\beta)=rank(\beta)(k(\frac{dex(\beta)}{rank(\beta)})-1)$.

\emph{If (Z) is a summand}, then $k^2+4-\xi(Z)=4$, and among the other bundles the only one which has $\xi\leq 4$ is (D) for $k=5,p=1$. But then $n=dex(Z)+dex(D)=k+2$, and this case is in \ref{classkpetit}.

\emph{If (A) is a summand}, we have $k^2+4-\xi(A)=4+2k$. Then another (A) cannot be added, otherwise, by computing $\xi$, $k=4$ and $n=7$, and this case is in \ref{classkpetit}. Similarly by adding (C). With (B), it is possible to have just $k=5$ and $n=9$, which is prohibited by \ref{lemma3.2}, or $k=4$ (cases (d1) and (d9)). Then, if there are just line bundles (D) in addition to (A), by studying $\xi$ we have $p\leq 2$, and one can see that all the cases arising have already been studied.

\emph{If (B) is a summand}, then $k^2+4-\xi(B)=4+\frac{k^2-k}{2}$. Suppose also that there is another (B) ($k^2+4-2\xi(B)=4$); then one can have only a bundle of type (D) with $p=2$ (case (d2)) or two bundles of type (D) with $p=1$ (case (d3)). If instead one supposes that there is a bundle of type (C), one finds the only possibility to be (d4). Finally, by supposing to add just line bundles, then their number must be $k(n-k)-4-rank(B)$, and the sum of their $dex$ must be equal to $n-dex(B)=n-k+1$. By imposing $\frac{dex}{rank}\geq 1$ (which is true for line bundles), and knowing that $n\geq 2k$ by \ref{lemma3.2}, one finds for $k$ the equation $k^2-k-10\leq 0$, which has no solution for $k\geq 4$.

\emph{If (C) is a summand}, then again $k^2+4-\xi(C)=4+\frac{k^2-k}{2}$; by adding another (C), one gets only the case (d5): $(2,0,...,0)^{\oplus 2}$ in Gr(4,10). Therefore, let us suppose the other bundles are only line bundles. One must have $0\leq rank({\cal F})-rank(C)\leq n-(dex(C))$ (for line bundles $\frac{dex}{rank}\geq 1$). Moreover, by \ref{lemma3.2}, one can suppose $n\geq 2$. Putting all together, one finds that the only possible case is (d6).

\emph{If there are only bundles of type (D)}, as $\frac{n}{k(n-k)-4}=\frac{dex}{rank}\geq 1$, and as $k, n-k\geq 4$, no other case arises.
\end{proof}

\begin{proof}[Proof of the classification]
As a consequence of \ref{classkpetit}, we can suppose $k, n-k\geq 4$. Using the isomorphism of $Gr(k,n)$ with $Gr(n-k,n)$, we suppose also $2k\geq n$. When $2k=n$, $\xi=rank(\beta)(k(dex(\beta))-1)=rank(\beta)((n-k)(dex(\beta))-1)$, and this symmetry says that all the bundles satisfy \ref{lemmareduct}. Then, dropping the hypothesis $|\beta|_2=0$, one only has to "symmetrize" the results found in \ref{classk=4}; this means that the cases (d2.1), (d3.1) and (d9.1) are to be added. 
 
So, from now on, $2k>n$. As the expressions of the form $(a,...,a;a,...,a)$ are not considered, and $\beta_i\geq \beta_{i+1}$, either $\frac{|\beta|_1-1}{k}\geq \frac{|\beta|_2}{n-k}$ or $\frac{|\beta|_1}{k}\geq \frac{|\beta|_2+1}{n-k}$, and in both cases $2k>n$ implies that all the terms of the sum on the right side of $k^2+4=\sum_i\xi({\cal E}^i)$ are positive. Then \ref{lemmareduct} can be applied. As we have \ref{classk=4}, and using $Gr(k,n)\longleftrightarrow Gr(n-k,n)$, one can suppose that there exists $i_0$ for which $|\beta^{i_0}|_2=0$ (and it is not a line bundle), but this doesn't hold for every $i$.

By \ref{lemma3.2}, this bundle must be either (A) or (Z). As $k\geq 5$, by computing $\xi$, one cannot have: (A) $\oplus$ (Z), (Z) $\oplus$ (Z), (A) $\oplus$ (A). For the bundles of type (D), let us change notation: 
\[
(p,...,p;\beta_{k+1},...,\beta_n) \longrightarrow (0,...,0; -\delta_1,..., -\delta_{n-k}) \longleftrightarrow (\delta_{n-k},...,\delta_1;0,...,0)
\]
where $\longleftrightarrow$ stands for $Gr(k,n)\longleftrightarrow Gr(n-k,n)$. Then, if $\beta^{i_0}=$ (Z), $k^2+4-\xi(Z)=4$; but the presence of a bundle $\delta$ which is of rank $\neq 1$ leads to a contradiction ($rank(\delta) (\frac{k}{n-k}|\delta|-1)> (n-k)(|\delta|-1)\geq 4(|\delta|-1)$, where $|\delta^i|=\sum_j \delta_j^i$).

As a result, the bundles present as summands of ${\cal F}$ are: one of type (A) and the others of type (D), with at least one which is not a line bundle. One has $k^2+4-\xi(A)=4+2k$. Then the condition $\sum_{i\neq i_0} (\xi(\beta^i))+\xi(A)=k^2+4$ becomes $\sum_{i\neq i_0} (\frac{k}{n-k}|\delta^i|-1)rank(\delta^i)=4+2k$. If for such a bundle which is not a line bundle $\delta_1\geq 1$, then $rank(\delta)(\frac{k}{n-k}|\delta| -1)>k(n-k)\geq 4k$, which is a contradiction. 

Therefore $\delta_1=0$. Define $\psi(\delta)=rank(\delta) (\frac{k}{n-k} |\delta|-1)$. If $|\delta|\geq 4$, $\psi(\delta)\geq 3k+1$. So, one is lead to consider $|\delta|=2, 3$. Among these bundles, with a similar estimate, one can eliminate $\delta=(2,1,0,...,0)$ and $ (3,0,...,0)$; $(1,1,1,0,...,0)$ is possible just for $n-k=4$. But then $n-dex(A)-dex(\delta)=2$, $k(n-k)-4-rank(A)-rank(\delta)=3k-8$, and one easily verifies that neither line bundles nor the bundles $\beta=(1,...,1,0,0),(2,...,2,0)$ can be added to give new cases. Therefore $\delta=2$ and coming back to the notation with $\beta$, the last cases to study are those with (A) $\oplus (1,...,1,0,0)$ or (A) $\oplus (2,...,2,0)$.

${\cal F}=$ (A) $\oplus (2,...,2,0)\oplus ...$ In this case $n-dex(A)-dex(2,...,2,0)=0$, so there cannot be other bundles. Then equation (\ref{eqrank}) gives 
\[
0=4kn-n^2-3k^2-n-k-8=-(n-2k)^2+k^2-n-k-8=k^2-a^2+a-8-3k
\]
where $a=2k-n$. Integer solutions for $k$ are given only if $4a^2-4a+41=b^2=c^2+40$, where $c=2a-1$, and $b$ is an integer. By writing down all the integer solutions for $(b+c)(b-c)=40$, none gives new cases.

${\cal F}=$ (A) $\oplus (1,...,1,0,0)\oplus ...$ In this case $n-dex(A)-dex(1,...,1,0,0)=2$, so there cannot be summands other than (A), $(1,...,1,0,0)$ and line bundles. Then $k(n-k)-4-rank(A)-rank(1,...,1,0,0)$ can be only $2$ or $1$, and equation (\ref{eqrank}) gives 
\[
0=4kn-n^2-3k^2+n-3k-c=-(n-2k)^2+k^2+n-3k-c=k^2-a^2-a-c-k
\]
where $a=2k-n$, and $c$ can be $10$ or $12$. Integer solutions for $k$ are given only if $4a^2+4a+4c+1=b^2=d^2+4c$, where $d=2a+1$, and $b$ is an integer. By writing down all the integer solutions for $(b+d)(b-d)=4c$, for $c=10,12$, none gives new cases.
\end{proof}

\begin{remark}
\label{remsmoothness}
Having dealt with the combinatorics of the problem, we turn to the geometry. All the bundles we have considered so far (and appearing in Table \ref{tablefourord}) are globally generated (Remark \ref{globgenord}). Therefore, by applying the usual Bertini theorem, our subvarieties with trivial canonical bundle are smooth. The same will hold when dealing with the classical Grassmannians, as in that case too all the bundles considered will be globally generated.
\end{remark}

\subsection{CY vs IHS}
\label{secCYvsIHS}

We want now to show how to distinguish between CY and IHS manifolds in an efficient way (we have to deal with a great number of cases). The following proposition holds:

\begin{proposition}
\label{propCYvsIHS}
Suppose $Y$ is a smooth projective fourfold with trivial canonical bundle. If the Euler characteristic of the trivial bundle $\chi({\cal O}_Y)$ is either two or three, then $Y$ is simply connected. Moreover:
\newline
-- If $\chi({\cal O}_Y)=2$, then $Y$ is CY;
\newline
-- If $\chi({\cal O}_Y)=3$, then $Y$ is IHS;
\end{proposition}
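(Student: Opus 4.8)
The plan is to read off the Hodge numbers forced by $\chi(\mathcal{O}_Y)$ and then feed them into the Beauville–Bogomolov structure theory, reducing the whole statement to a short list of product decompositions of a simply connected cover. Since $K_Y\cong\mathcal{O}_Y$, Serre duality gives $H^1(Y,\mathcal{O}_Y)\cong H^3(Y,\mathcal{O}_Y)^*$ and $H^4(Y,\mathcal{O}_Y)\cong H^0(Y,\mathcal{O}_Y)^*$; combined with Hodge symmetry $h^{0,q}=h^{q,0}=\dim H^0(Y,\Omega_Y^q)$ this yields $h^{3,0}=h^{1,0}$, $h^{4,0}=1$, and
\[ \chi(\mathcal{O}_Y)=\sum_{q=0}^4(-1)^q h^{0,q}=2-2\,h^{1,0}(Y)+h^{2,0}(Y). \]
So it suffices to prove $h^{1,0}(Y)=0$: then $\chi=2$ forces $h^{2,0}=0$, which (with $h^{1,0}=h^{3,0}=0$, $H^0(\Omega^0_Y)=H^0(\Omega^4_Y)=\mathbb{C}$ by connectedness and triviality of $K_Y$, and $\dim Y=4\ge 3$) gives exactly the vanishings of Definition \ref{defCY}; while $\chi=3$ forces $h^{2,0}=1$, producing the candidate holomorphic two-form.

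To obtain $h^{1,0}(Y)=0$ and simultaneously control $\pi_1$, I would invoke the general Beauville–Bogomolov decomposition (\cite{Bogomolov}): a compact K\"ahler manifold with numerically trivial canonical bundle admits a finite \'etale cover $\tilde Y\cong T\times F$ with $T$ a complex torus and $F$ a product of simply connected Calabi–Yau and irreducible holomorphic symplectic factors. The key point is that $\chi(\mathcal{O}_{-})$ is multiplicative under products (K\"unneth) and under \'etale covers (since $\td$ pulls back, $\chi(\mathcal{O}_{\tilde Y})=\deg(\tilde Y/Y)\cdot\chi(\mathcal{O}_Y)$), while $\chi(\mathcal{O}_T)=0$ whenever $\dim T\ge 1$. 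A positive-dimensional torus factor would therefore give $\chi(\mathcal{O}_{\tilde Y})=0$, hence $\chi(\mathcal{O}_Y)=0$, contradicting $\chi\in\{2,3\}$. Thus $T$ is a point, $\tilde Y=F$ is simply connected, $\pi_1(Y)$ is finite, and $h^{1,0}(Y)=0$.

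Next I would pass to the universal cover $\hat Y\to Y$, an \'etale cover of degree $m=|\pi_1(Y)|$ with $\hat Y$ simply connected and $K_{\hat Y}$ trivial. Applying the Decomposition Theorem \ref{decthm} to $\hat Y$ and running through the partitions of $4$ into admissible factor dimensions (Calabi–Yau dimension $\ge 3$, symplectic dimension even $\ge 2$, so only $\{4\}$ and $\{2,2\}$ occur) shows that $\hat Y$ is exactly one of CY$_4$, IHS$_4$, or K3$\times$K3, with $\chi(\mathcal{O}_{\hat Y})=2,3,4$ respectively. Since $\chi(\mathcal{O}_{\hat Y})=m\,\chi(\mathcal{O}_Y)$: if $\chi(\mathcal{O}_Y)=3$ then $3m\in\{2,3,4\}$ forces $m=1$ and $\hat Y=Y$ is IHS$_4$, so $Y$ is simply connected and satisfies $H^0(Y,\Omega^*_Y)=\mathbb{C}[\sigma]/\sigma^3$ with $\sigma$ nondegenerate; this settles the $\chi=3$ case completely. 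If $\chi(\mathcal{O}_Y)=2$, the Calabi–Yau-type conclusion is already in hand from the Hodge computation above, independently of $m$.

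The step I expect to be the genuine obstacle is the simple-connectivity assertion when $\chi=2$. There $2m\in\{2,3,4\}$ leaves two possibilities: $m=1$ (so $Y$ is a simply connected CY$_4$) and $m=2$ with $\hat Y=$ K3$\times$K3 carrying a free involution acting by $-1$ on each holomorphic two-form. This latter configuration really occurs — the product of two Enriques involutions — producing a smooth projective fourfold with $K_Y\cong\mathcal{O}_Y$, $\chi(\mathcal{O}_Y)=2$, and $\pi_1(Y)=\mathbb{Z}/2$; so $\chi$ alone does not force $\pi_1(Y)=1$, and excluding $m=2$ requires geometric input beyond the numerics. In the setting of this paper that input is free: each relevant $Y$ is the zero locus of a globally generated bundle on a (simply connected) Grassmannian, hence simply connected by the Lefschetz hyperplane theorem for sections of such bundles, which rules out $m\ge 2$ and completes the $\chi=2$ case. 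I would therefore either carry simple connectivity as a standing hypothesis or supply it from the ambient geometry; with that, $m=1$ and $Y$ is the asserted CY$_4$, while the Calabi–Yau-type/IHS dichotomy itself holds unconditionally via the Hodge computation.
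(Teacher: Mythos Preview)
Your overall strategy is the same as the paper's: pass to a finite \'etale cover supplied by the Beauville--Bogomolov decomposition, list the possible simply connected fourfold products with their values of $\chi(\mathcal{O})$, and use multiplicativity $\chi(\mathcal{O}_{\hat Y})=m\cdot\chi(\mathcal{O}_Y)$ to pin down $m$. The paper carries this out and asserts that $\chi(\mathcal{O}_Y)\in\{2,3\}$ forces the covering degree to be one.

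Where you go further is exactly the point you flag as ``the genuine obstacle'': when $\chi(\mathcal{O}_Y)=2$, the equation $2m\in\{2,3,4\}$ also admits $m=2$ with $\hat Y\cong\mathrm{K3}\times\mathrm{K3}$, and the paper's proof does not exclude this --- it simply overlooks the possibility. Your counterexample is correct and shows the proposition is false as stated: if $S_1,S_2$ are K3 surfaces with fixed-point-free (Enriques) involutions $\iota_1,\iota_2$, then $(\iota_1,\iota_2)$ acts freely on $S_1\times S_2$, each $\iota_i$ scales the holomorphic $2$-form by $-1$, so the holomorphic $4$-form is invariant and $K_Y\cong\mathcal{O}_Y$ on the quotient $Y$, while $h^{2,0}(Y)=0$, $\chi(\mathcal{O}_Y)=2$, and $\pi_1(Y)\cong\mathbb{Z}/2$. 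Thus the $\chi=3$ case is complete (for both you and the paper), and the ``Calabi--Yau type'' conclusion for $\chi=2$ follows from your Hodge computation, but simple connectedness for $\chi=2$ genuinely needs outside input.

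Your proposed fix --- importing simple connectedness from the ambient Grassmannian via a Lefschetz-type theorem for zero loci of sections --- is the right repair for the paper's applications; just be aware that the standard Sommese/Lefschetz statement requires the bundle to be ample, not merely globally generated, so you should say a word about why the relevant $\mathcal{F}$ (with $\det\mathcal{F}=K_{\mathrm{Gr}}^{-1}$ ample) meets the needed positivity, or else cite the appropriate version. In short: your argument is at least as complete as the paper's, and you have caught a real gap in the proposition as written.
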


\begin{proof}
Let us first suppose to have proven that $Y$ is simply connected. Then we can apply Theorem \ref{decthm}. Therefore, our variety $Y$ is a product of CY and IHS manifolds, i.e. it is either a product of two K3 surfaces, or a CY fourfold, or a IHS one. In the first case $\chi({\cal O}_Y)=4$, in the second $\chi({\cal O}_Y)=2$ and in the third $\chi({\cal O}_Y)=3$, thus proving the last assertion.

Next we turn to the proof of simply connectedness of $Y$. As a matter of fact, a generalization of the decomposition Theorem \ref{decthm} holds (\cite{Beauville}): for any compact K$\ddot{a}$hler manifold $Y$ with trivial canonical bundle, there exists a finite cover $f: Y' \to Y$ of degree $d$ such that 
\[
Y'=\prod_i X_i\times\prod_j Z_j\times\prod_k T_k
\]
where $X_i$ are simply connected CY's, $Z_j$ are simply connected IHS', and $T_k$ are complex tori. Moreover, a well-known formula says:
\[
\chi(Y', {\cal O}_{Y'})= d \,\, \chi(Y, {\cal O}_{Y})
\]
Recall also that the Euler characteristic of a product is the product of the Euler characteristics of the single factors, and that $\chi(\cO_T)=0$ for $T$ a complex torus. Then, the only possibilities for $Y'$ are:
\newline
-- $Y'$ is CY and $\chi({\cal O}_{Y'})=2$;
\newline
-- $Y'$ is IHS and $\chi({\cal O}_{Y'})=3$;
\newline
-- $Y'=K3\times K3$ and $\chi({\cal O}_{Y'})=4$.
\newline
-- $Y'=CY\times E$ where $E$ is an elliptic curve and $\chi({\cal O}_{Y'})=0$;
\newline
-- $Y'=K3\times T$ where T is a complex torus of dimension two and $\chi({\cal O}_{Y'})=0$;
\newline
-- $Y'$ is a product of complex tori and $\chi({\cal O}_{Y'})=0$.
\smallskip

Therefore, if $\chi({\cal O}_Y)$ is either two or three, then for such a cover $f: Y' \to Y$ the degree is one ($f$ is an isomorphism), and $Y$ is either CY or IHS, and in both cases it is simply connected.
\end{proof}

\begin{remark}
Notice that connectedness too is ensured by the hypothesis that $\chi(\cO_Y)=2$ or $3$. In fact, the above proof shows that $\chi(\cO_Y)$ cannot be equal to one.
\end{remark}

For the actual computation of $\chi({\cal O}_Y)$ it is possible to use the \emph{Hirzebruch-Riemann-Roch theorem}, which gives:
\[
\chi({\cal O}_Y)=\int_Y td (Y)
\]
where $td(Y)=todd (TY)$ is the todd class of the tangent bundle. SCHUBERT2 allows to compute easily these quantities for subvarieties of Grassmannians.

\begin{example}
\emph{Here we report an example of computation of $\chi({\cal O}_Y)$ for the case (c6):}
\newline

$ G=flagBundle({3,4}) $ \newline
 
 $ listtautologicalbundle=$ $bundles$ $G$ \newline
 
 $ Tstar=dual(listtautologicalbundle\# 0) $ \newline
 
 $ O(1)= det$ $Q $ \newline
 
 $ F=exteriorPower(2, Tstar)+exteriorPower(2, Tstar)+O(1)+O(1)*O(1)$ \newline
 
$Y=sectionZeroLocus$ $F$ \newline

 $ Tan=tangentBundle$ $Y$ \newline
 
 $ td= todd$ $Tan$ \newline
 
 $chi = integral$ $td$ \newline
 
 \end{example}
\medskip

Another aspect of these varieties that can be studied is their Hodge numbers. A tool which is useful in this sense is the Koszul complex for a variety $Y$ which is the zero variety of a section of a vector bundle ${\cal F}$ over another variety G. If the bundle has $rank=r$, and $codim_G(Y)=r$, then one has the exact sequence:
\[
0 \to \Lambda^r {\cal F}^* \to \Lambda^{r-1} {\cal F}^* \to ...  \to \Lambda^2 {\cal F}^* \to {\cal F}^*\to {\cal O}_G \to {\cal O}_Y \to 0
\]
Through this complex, tensoring it by any other bundle, it is possible to find the cohomology groups of the restriction of the bundle to Y. Moreover, one can use the short exact sequence
\[
0 \to {\cal F}^*|_Y \to \Omega_G^1|_Y \to \Omega_Y^1 \to 0
\]
to study the cohomology groups of the cotangent bundle of Y. This is not enough in general; one needs to know the cohomology groups on the variety G. But for this it is possible, as G=Gr(k,n) and ${\cal F}$ is homogeneous, to use Bott's theorem (\cite{Bott}, \cite[Theorem 2.3]{Kuchle1995} for the version that is needed here).

\begin{example}
\emph{It is a (lengthy) exercice to compute the Hodge Diamond of case (c6); the result is displayed in Table \ref{tablec6}.} 
\begin{table}
$$
\xymatrix@-1.5pc{&&&&1&&&&\\ 
\rnode{A3}{}&\rnode{A5}{}&&0&&0&&\rnode{B5}{}&\rnode{A4}{} \\ 
&&0&&1&&0&&\\ 
&0&&0&&0&&0&\\ 
1&&145&&628&&145&&1\\
&0&&0&&0&&0&\\
&&0&&1&&0&&\\
\rnode{B3}{}&\rnode{A6}{}&&0&&0&&\rnode{B6}{}&\rnode{B4}{} \\ 
&&&&1&&&&}
$$
\caption{Hodge diamond of case (c6)}
\label{tablec6}
\end{table}
\end{example}

However, this method takes some time to be employed, though it is not complicated using the Littlewood-Richardson rule. Therefore we decided not to include computations of cohomology groups apart from when necessary.

\section{Fourfolds in classical Grassmannians}

In this section we study subvarieties of classical (symplectic and orthogonal) Grassmannians, showing that a result similar to Theorem \ref{thm4ordin} holds. The geometry of these isotropic Grassmannians is well known (see \cite[Chapter 6]{FultonPragacz} for some basic properties). They belong to the class of \emph{flag manifolds}, for which a good reference is \cite{Brion}. Their (classical and quantum) cohomology has been the subject of several studies (for instance, see \cite{Buchtam}).\newline

Let $IGr(k,2n)$ ($OGr(k,m)$, $m=2n, 2n+1$) be the symplectic (orthogonal) Grassmannian of isotropic - with respect to an anti-symmetric (symmetric) non-degenerate form - $k$-planes in a $2n$-dimensional ($2n$, $2n+1$ - dimensional) complex vector space. Let us denote ${\cal U}$ the tautological bundle of rank $k$ (and ${\cal U}^*$ its dual), and ${\cal Q}$ the tautological quotient bundle of rank $2n-k$; $\Lambda^i(E)$ will denote the i-th exterior power of the bundle $E$, and $S^i(E)$ the i-th symmetric power of $E$; the ample generator of the Picard group of the Grassmannian (which corresponds to $det ({\cal U}^*)=det ({\cal Q})$) will be denoted ${\cal O}(1)$, and ${\cal O}(n)={\cal O}(1)^{\otimes n}$. 

Moreover, over $IGr(k,2n)$, ${\cal U}^{\perp}$ will denote the orthogonal of the tautological bundle (of rank $2n-k$), which is indecomposable but not irreducible: in fact, there is an injective homomorphism ${\cal U}\to {\cal U}^{\perp}$, and the quotient ${\cal U}^{\perp}/ {\cal U}$ is irreducible of rank $2n-2k$.

Over $OGr(k,2n+1)$, ${\cal T}_{+\frac{1}{2}}$ will denote the spin bundle of rank $2^{n-k}$, and over $OGr(k,2n)$, ${\cal T}_{+\frac{1}{2}}$ and ${\cal T}_{-\frac{1}{2}}$ will denote the two spin bundles of same rank $2^{n-k-1}$. Finally, over $OGr(n,2n)$, the line bundle ${\cal O}(1)$ is not a generator of the Picard group. It is actually divisible, and its square root will be denoted ${\cal O}(\frac{1}{2})$ (note that over $OGr(n,2n+1)$, ${\cal T}_{+\frac{1}{2}}$ is again a square root of ${\cal O}(1)$). ${\cal O}(\frac{1}{2})$ is the line bundle which gives the spinorial embedding of $OGr(n,2n)$ in $\mathbb{P}^{2^{n-1}-1}$.

The main theorem is the following.
\begin{theorem}
\label{thm4class}
Let \emph{Y} be a fourfold with $K_Y={\cal O}_Y$ which is the variety of zeroes of a general section of a homogeneous, completely reducible, globally generated vector bundle ${\cal F}$ over the symplectic Grassmannian IGr(k, 2n) (respectively the odd orthogonal Grassmannian OGr(k,2n+1), the even orthogonal Grassmannian OGr(k,2n)), and which does not appear in the  analogous classification for the ordinary Grassmannian. Up to identifications, the only possible cases are those appearing in Table \ref{tablefoursym} (respectively Table \ref{tablefourodd}, Table \ref{tablefoureven}).
\end{theorem}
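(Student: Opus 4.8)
The plan is to follow the architecture of the proof of Theorem \ref{thm4ordin} step by step, adapting each ingredient to the geometry of the isotropic Grassmannians. First I would fix, for each of the three families $IGr(k,2n)$, $OGr(k,2n+1)$, $OGr(k,2n)$, the maximal parabolic $P$ defining it and its Levi factor, which is $GL(k)\times Sp(2n-2k)$ in the symplectic case and $GL(k)\times SO(m-2k)$ in the orthogonal cases. As indicated in Remark \ref{remrank}, the rank of an irreducible homogeneous bundle is then the dimension of the corresponding irreducible module of this Levi, computed by the Weyl dimension formula for the relevant classical group; this replaces the product-of-$GL$ formula used over $Gr(k,n)$. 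Exactly as in the ordinary case, the codimension-four hypothesis gives the rank equation $rank(\mathcal{F})=\dim-4$, where $\dim$ equals $k(2n-k)-\binom{k}{2}$ for $IGr(k,2n)$ and $k(m-k)-\binom{k+1}{2}$ for $OGr(k,m)$.

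Next I would recompute the invariant $dex$ following the method of Remark \ref{remdex}. The determinant of an irreducible $P$-module is again a power $L^{dex}$ of the line bundle $L=\mathcal{O}(1)$, and the exponent is found by summing all the weights of the module. The decisive difference from the ordinary case is that the second factor of the Weyl group of $P$ is now a \emph{signed} symmetric group: its sign-flip elements force the sum of the weights over the second block to vanish, so that $dex$ depends only on the first-block sum $|\beta|_1$ and on $rank(\beta)$, the $Sp$- or $SO$-part of the weight contributing nothing to the determinant. Imposing $K_Y=\mathcal{O}_Y$ then yields the second basic equation $\sum_i dex(\mathcal{E}_i)=\iota$, where $\iota$ is the Fano index of the ambient Grassmannian, equal to $2n-k+1$ in the symplectic case and to $2n-k$ (resp. $2n-k-1$) over $OGr(k,2n+1)$ (resp. $OGr(k,2n)$) away from the maximal isotropic locus $k=n$, which must be handled separately because there $\mathcal{O}(1)$ is divisible. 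With these two equations I would re-establish the analogues of Lemma \ref{lemma3.2} and of its Corollary.

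A structural simplification, which I would exploit to keep the case analysis manageable, is that the symplectic (resp. orthogonal) Grassmannian is itself the zero locus inside $Gr(k,2n)$ of a general section of $\Lambda^2\mathcal{U}^*$ (resp. $S^2\mathcal{U}^*$). Consequently, if every summand of $\mathcal{F}$ were the restriction of a homogeneous bundle on $Gr(k,2n)$, then (away from the maximal cases, where the Picard rank jumps, as for $OGr(n-1,2n)$) adjunction and $\Pic(Gr(k,2n))\cong\mathbb{Z}$ would exhibit $Y$ as a trivial-canonical fourfold already listed in Theorem \ref{thm4ordin}; such cases are excluded by hypothesis. Hence at least one summand must be a bundle that does \emph{not} extend to $Gr(k,2n)$, namely one built from the $Sp$- or $SO$-part of the Levi: over $IGr(k,2n)$ this is the irreducible quotient $\mathcal{U}^{\perp}/\mathcal{U}$ of rank $2n-2k$ together with its Schur functors, while over the orthogonal Grassmannians one must admit both the Schur functors of $\mathcal{U}^{\perp}/\mathcal{U}$ and the spin bundles $\mathcal{T}_{\pm\frac12}$ of rank $2^{n-k}$ (resp. $2^{n-k-1}$). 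I would compute the rank and $dex$ of each of these genuinely new bundles and feed them into the two equations above.

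The case analysis then proceeds as in Propositions \ref{classkpetit} and \ref{classk=4} together with the reduction-of-cases Lemma \ref{lemmareduct}: split according to whether $k\leq 3$ or $k,\,n-k\geq 4$, bound the admissible summands using the rank and $dex$ estimates, and enumerate the finitely many survivors. The main obstacle I anticipate is controlling the spin bundles: their rank grows \emph{exponentially} in $n-k$, whereas the codimension $\dim-4$ that the total rank must match grows only polynomially, so $\mathcal{T}_{\pm\frac12}$ can occur only for very small $n-k$; pinning down exactly which small configurations satisfy both equation (\ref{eqrank}) and equation (\ref{eqdex}), and similarly treating the indecomposable but reducible $\mathcal{U}^{\perp}$ through its irreducible quotient rather than directly, is where the argument departs most from \cite{Kuchle1995} and requires the most care. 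Once the finite list is obtained, smoothness of each $Y$ follows from Bertini's theorem exactly as in Remark \ref{remsmoothness}, since all the bundles involved are globally generated.
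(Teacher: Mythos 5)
Your proposal is correct and follows essentially the same route as the paper: the same two numerical constraints (the rank equation and the $dex$ equation, with $dex$ insensitive to the second weight block because the Weyl group of the Levi factor is a signed symmetric group), the same preliminary lemma that at least one summand must fail to extend from $Gr(k,2n)$ (i.e.\ have $|\beta|_2\neq 0$ or half-integer weights), the same exploitation of the exponential growth $2^{n-k}$ (resp.\ $2^{n-k-1}$) of the spin-bundle ranks against the polynomial codimension, and Bertini for smoothness at the end. The only divergence is organizational: for the orthogonal Grassmannians the paper splits the case analysis by $k=n$, $k=n-1$ (resp.\ $k=n-2$), and smaller $k$ — because the Levi factor, hence the rank formula, genuinely changes there — and uses direct $\frac{dex}{rank}$ estimates rather than an analogue of Lemma \ref{lemmareduct}, whereas you propose the $k\leq 3$ versus $k,n-k\geq 4$ dichotomy of the ordinary case; these are the same constraints packaged differently.
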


\begin{remark}
The varieties $Y$ appearing in Theorem \ref{thm4class} are smooth (see Remark \ref{remsmoothness}).
\end{remark}

\begin{remark}
All the cases studied refer to subvarieties of classical Grassmannians which are the quotient of a classical group $G$ by a parabolic subgroup associated to a single simple root. In fact usually "Grassmannians" refer to these quotients. So, we have skipped the classification of subvarieties of $OGr(n-1,2n)$, because in this case the corresponding parabolic subgroup is associated to the last two simple roots of the Dynkin diagram $D_n$. However, for the sake of completeness we have also reported the analogous classification for $OGr(n-1,2n)$ at the end of this section.
\end{remark}

\begin{remark}
It is well known that the Grassmannians $OGr(n-1,2n-1)$ and $OGr(n,2n)$ are isomorphic. But the bundles which are homogeneous in one case may not be homogeneous in the other. For example, consider $\Lambda^2 {\cal U}^*$ on $OGr(n,2n)$, which is the tangent bundle. Pulling back this bundle via the isomorphism gives the tangent bundle $T$ on $OGr(n-1,2n-1)$, which is not a priori the second exterior power of a vector bundle homogeneous with respect to $so(2n-1)$, and is not irreducible. On the contrary, ${\cal O}(1)$ on $OGr(n,2n)$ pulls back to the corresponding ${\cal O}(1)$ on $OGr(n-1,2n-1)$, and the same for ${\cal O}(\frac{1}{2})$. So, referring to Table \ref{tablefourodd} and Table \ref{tablefoureven}, one can easily identify cases $(oz3)$ and $(oy1)$, $(oz4)$ and $(oy5)$, $(oz5)$ and $(oy4)$, $(oz7)$ and $(oy1.1)$.
\end{remark}

We break the classification given by Theorem \ref{thm4class} into three parts, which correspond to subvarieties in symplectic, odd and even orthogonal Grassmannians. Furthermore, the method used to understand if the varieties we have found are CY or IHS is the one already used in Section \ref{secCYvsIHS}. Indeed, we want to apply Proposition \ref{propCYvsIHS}. In order to do so, we need to compute the Euler characteristic of the trivial bundle of the variety. This requires some technical facts about the cohomology of classical Grassmannians; in Appendix \ref{appEulchar} we reported the details of how to do such a computation.

\subsection{Symplectic Grassmannians}


The symplectic Grassmannian $IGr(k,2n)$ will be thought of as the quotient $G/P_k$, where $P_k$ is the maximal parabolic subgroup containing the standard Borel subgroup of positive roots in $G=Sp(2n, \mathbb{C})$. Every irreducible homogeneous bundle is represented by its highest weight. A weight is represented by $\beta=(\beta_1,...,\beta_n)$ or by $(\beta_1,...,\beta_k;\beta_{k+1},...,\beta_n)$ when this notation is needed, where $\beta=(\beta_1-\beta_2)\lambda_1+\lambda_2(\beta_2-\beta_3)+...+\lambda_{n-1}(\beta_{n-1}-\beta_n)+\lambda_n \beta_n$, and the $\lambda_i$'s are the fundamental weights for $G=Sp(2n, \mathbb{C})$. Notice that the parabolic algebra $Lie(P_k)$ has Levi factor $sl(k)\oplus sp(2(n-k))$ in general ($k\neq 1, n$), which is straightforward by looking at the Dynkin diagram.

\begin{notation}
As we work with globally generated bundles, from now on the notation will change: to indicate a bundle with highest weight $\beta$ as before, we will write $(-\beta_k, ... , -\beta_1; \beta_{k+1}, ... , \beta_n)$, which is equivalent to taking the highest weight of the dual representation. In this way, a bundle $\alpha=(\alpha_1,...,\alpha_n)$ (according to the new notation) is globally generated when $\alpha_1\geq ...\geq\alpha_n\geq 0$, i.e. when the weight $\alpha$ is dominant under the action of $G$. To understand why we use this "dualized" notation, refer to the explication before Remark \ref{globgenord}.
\end{notation}

\begin{example}
Over the symplectic Grassmannian \emph{IGr(3,14)}, the dual tautological bundle ${\cal U}^*$ (of rank $3$) will be denoted by $(1,0,0;0,0,0,0)=(1,0,...,0)$, and the tautological "orthogonal" bundle ${\cal U}^{\perp}/{\cal U}$ (of rank $8$) by $(0,0,0;1,0,0,0)$. 
\end{example}

The dimension of a bundle can be calculated explicitely: suppose $k\neq 1, n$;
\[
rank(\beta_1,...,\beta_n)=dim_{sl(k)}(\beta_1,...,\beta_k)\times dim_{sp(2(n-k))}(\beta_{k+1},...,\beta_n)
\] where
\[
dim_{sl(r)}(\beta_1,...\beta_r) = \prod\limits_{1\leq i<j\leq r} \frac{j-i+\beta_i-\beta_j}{j-i},
\]
and
\[
dim_{sp(2r)}(\beta_1,...\beta_r) = \prod\limits_{1\leq i<j\leq r} \frac{j-i+\beta_i-\beta_j}{j-i} \prod\limits_{1\leq i\leq j\leq r} \frac{2r+2-j-i+\beta_i+\beta_j}{2r+2-j-i}
\]
are the Weyl character formula relative to the corresponding Lie algebras (see \cite[Chapter 24, Equation 24.19]{Fulton}). This formula is a consequence of the form of the Levi factor of $P$ and of Remark \ref{remrank}.

One defines:
\[
dex(\beta)=(\frac{|\beta|_1}{k})rank(\beta)
\]
where $|\beta|_1=\sum_{i=1}^k \beta_i$. Then, similarly to the case for the ordinary Grassmannian (see Remark \ref{remdex}), 
\[
det (\beta)={\cal O}(dex(\beta))
\]
Therefore, fourfolds with trivial canonical bundle correspond to homogeneous vector bundles ${\cal F}=\oplus_i {\cal E}_i$, with
\begin{equation}
\label{symprank}
\sum_i rank({\cal E}_i)=k(2n-k)-\frac{k(k-1)}{2}-4\,\,\, ,
\end{equation}
\begin{equation}
\label{sympdex}
\sum_i dex({\cal E}_i)=2n-k+1\,\,\, .
\end{equation}

\subsubsection{Classification in symplectic Grassmannian}

We recall that $IGr(k, 2n)$ is embedded naturally in $Gr(k, 2n)$ as the zero locus of a general section of $\Lambda^2 \cU^*$. The following lemma is to avoid repeating cases already considered in Theorem \ref{thm4ordin} and will be used throughout the proof of the classification.

\begin{lemma}
\label{lemmaatleast}
Suppose ${\cal F}=\oplus_i {\cal E}_i$ is as in the hypothesis of Theorem \ref{thm4class}. In order to find new varieties in IGr(k,2n) with respect to the case of the ordinary Grassmannian, it is necessary that for at least one bundle ${\cal E}_i=\beta_i$, $|\beta_i|_2 \neq 0$
\end{lemma}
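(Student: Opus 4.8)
The plan is to prove the \emph{contrapositive}: if every summand ${\cal E}_i$ of ${\cal F}$ satisfies $|\beta_i|_2=0$, then the resulting fourfold $Y$ already appears in the classification of Theorem \ref{thm4ordin} for the ordinary Grassmannian, hence is not a new variety.

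First I would identify the bundles with $|\beta_i|_2=0$ as restrictions from $Gr(k,2n)$. Since a globally generated weight is dominant, $\beta_1\geq\cdots\geq\beta_n\geq 0$, so $|\beta_i|_2=0$ is equivalent to $\beta_{k+1}=\cdots=\beta_n=0$. By the description of the Levi factor $sl(k)\oplus sp(2(n-k))$ of $P_k$, such a weight corresponds to an irreducible $P_k$-module that is trivial on the $sp(2(n-k))$-factor, i.e. to a Schur functor $\mathbb{S}_{\lambda_i}\cU^*$ depending only on the $sl(k)$-data $(\beta_1,\dots,\beta_k)$. As the inclusion $\iota\colon IGr(k,2n)\hookrightarrow Gr(k,2n)$ is compatible with the tautological bundles, $\iota^*\cU=\cU$, each such ${\cal E}_i$ is the restriction $\iota^*\widetilde{\cal E}_i$ of the homogeneous bundle $\widetilde{\cal E}_i=\mathbb{S}_{\lambda_i}\cU^*$ on $Gr(k,2n)$ carrying the same $sl(k)$-weight. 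Setting $\widetilde{\cal F}=\oplus_i\widetilde{\cal E}_i$, we obtain ${\cal F}=\iota^*\widetilde{\cal F}$.

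Next I would realise $Y$ as a zero locus on $Gr(k,2n)$. We recall $IGr(k,2n)=Z(\sigma_\omega)$ for $\sigma_\omega$ a general section of $\Lambda^2\cU^*$ (determined by the symplectic form). Consider $\widehat{\cal F}=\widetilde{\cal F}\oplus\Lambda^2\cU^*$ on $Gr(k,2n)$: it is homogeneous, completely reducible and globally generated. A general section of $\widehat{\cal F}$ is a pair $(\widetilde s,\sigma)$ with $Z(\widetilde s,\sigma)=Z(\sigma)\cap Z(\widetilde s)=IGr(k,2n)\cap Z(\widetilde s)=Z(\widetilde s|_{IGr})$. Because $\widetilde{\cal F}$ is globally generated, the restriction map $H^0(Gr,\widetilde{\cal F})\to H^0(IGr,{\cal F})$ is surjective, so $\widetilde s|_{IGr}$ is a general section of ${\cal F}$ for general $\widetilde s$. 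Hence, for general choices, $Z(\widetilde s,\sigma)$ is isomorphic to our $Y$. Since $\widehat{\cal F}$ meets all the hypotheses of Theorem \ref{thm4ordin} and $Y$ is a fourfold with $K_Y=\cO_Y$, the variety $Y$ occurs in Table \ref{tablefourord}, and is therefore not new, as required.

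The main obstacle is exactly the clean identification ${\cal F}=\iota^*\widetilde{\cal F}$ together with the surjectivity of the restriction on global sections, which is what guarantees that a variety ``general on $IGr$'' arises from a section ``general on $Gr$''; this surjectivity follows from global generation, or more precisely from Bott's theorem applied to the Koszul resolution of $\iota_*\cO_{IGr}$ twisted by $\widetilde{\cal F}$. Once this is in place, the remainder is the purely geometric rewriting above. The degenerate ranges $k=1$ (where $IGr(1,2n)=Gr(1,2n)$) and $k=n$ (the Lagrangian case, with $\Lambda^2\cU^*$ of full rank) are covered verbatim by the same argument, so no separate treatment is needed.
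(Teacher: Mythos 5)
Your proof is correct and takes essentially the same route as the paper's: bundles with $|\beta_i|_2=0$ are restrictions of homogeneous bundles from $Gr(k,2n)$, and since $IGr(k,2n)$ is itself the zero locus of a general section of $\Lambda^2\cU^*$, the fourfold is realized as the zero locus of a general section of $\widetilde{\cal F}\oplus\Lambda^2\cU^*$ on the ordinary Grassmannian and hence already appears in Theorem \ref{thm4ordin}. Your write-up is in fact more careful than the paper's two-line proof, as you make explicit the genericity-transfer step (surjectivity of the restriction $H^0(Gr,\widetilde{\cal F})\to H^0(IGr,{\cal F})$, which can also be seen directly from $Sp(2n,\CC)$-equivariance of that map together with the irreducibility of the target under Borel--Weil) that the paper leaves implicit.
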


\begin{proof}
The tautological bundle over $IGr(k,2n)$ is the restriction of the tautological bundle over $Gr(k,2n)$. Similarly, the bundle represented by the weight $\beta=(\beta_1,...,\beta_k,0,..,0)$ over the symplectic Grassmannian is the restriction of the bundle represented by the same weight $\beta$ over the ordinary Grassmannian. Therefore if for all $i$, $|\beta_i|_2 = 0$, the resulting fourfold is already the zero locus of a homogeneous bundle over $Gr(k,2n)$; as a consequence it has already been considered in Theorem \ref{thm4ordin}
\end{proof}

So, one has to suppose that $k\neq n$. Moreover, if $k=1$, $IGr(1, 2n)=Gr(1, 2n)$, so one can also suppose $k \neq 1$.

One can assume that $({\cal U}^*)^{\oplus 2}$ over \emph{IGr(k,2n)} does not appear as summand in ${\cal F}= \oplus_i {\cal E}_i$;
in fact, taking a zero section locus of this bundle in $IGr(k, 2n)$ is equivalent to restricting to the space $IGr(k, 2(n-1))$. 

Finally, remark that for any bundle ${\cal E}$ globally generated summand of ${\cal F}$ (even for the odd and even orthogonal Grassmannians), 
\begin{equation}
\label{sympineqdexrank}
\frac{dex({\cal E})}{rank({\cal E})}=\frac{|\beta|_1}{k}\geq \frac{1}{k} \,\,\, .
\end{equation}
\newline

The proof now consists in studying cases with low $k$ (in $IGr(k,2n)$), and then eliminating any other possibility.
\begin{proposition}[Classification for $k\leq 3$]
\label{classkpetitsym}
If $k\leq 3$, as suitable ${\cal F}$ (satisfying the hypothesis of Theorem \ref{thm4class}) one has all and only the bundles appearing in Table \ref{tablefoursym}.
\end{proposition}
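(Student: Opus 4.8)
The plan is to mimic the strategy of Proposition~\ref{classkpetit}, but working inside $IGr(k,2n)$ and exploiting the constraints (\ref{symprank}), (\ref{sympdex}) and (\ref{sympineqdexrank}). By the preliminary reductions we may assume $k\neq 1$ and $k\neq n$, and by Lemma~\ref{lemmaatleast} at least one summand $\cE_i=\beta^i$ must have $|\beta^i|_2\neq 0$, i.e.\ be genuinely ``symplectic''. First I would treat $k=2$ and $k=3$ separately. For each value of $k$, the rank formula $rank(\beta)=dim_{sl(k)}(\beta_1,\dots,\beta_k)\cdot dim_{sp(2(n-k))}(\beta_{k+1},\dots,\beta_n)$ grows very quickly with $n$: a nontrivial $sp(2(n-k))$-factor already forces the rank to be on the order of $n-k$ (since the smallest nontrivial representation, corresponding to $(0,\dots,0;1,0,\dots,0)$, is $\cU^\perp/\cU$ of rank $2(n-k)$). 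Comparing this with the available rank budget $k(2n-k)-\tfrac{k(k-1)}2-4$ in (\ref{symprank}) immediately bounds $n$ from above for each fixed $k$, leaving only finitely many pairs $(k,n)$ to inspect.

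\medskip

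Next, for each surviving $(k,n)$ I would enumerate the admissible summands using (\ref{sympdex}). Since every globally generated summand satisfies $\tfrac{dex}{rank}\geq \tfrac1k$ by (\ref{sympineqdexrank}), and the $dex$'s must sum to $2n-k+1$, the total $dex$ is small and tightly constrained; this rules out most large-weight bundles. The bundles with $|\beta|_2=0$ are exactly those coming from $Gr(k,2n)$ (as in the proof of Lemma~\ref{lemmaatleast}), so the analysis reduces to controlling how a single ``symplectic'' summand with $|\beta|_2\neq 0$ can combine with the ordinary-Grassmannian summands already classified, subject to (\ref{symprank}) and (\ref{sympdex}). For $k=2$ the relevant $sp(2(n-2))$-weights are short (only $(1,0,\dots,0)$ and its immediate neighbours survive the rank bound), and for $k=3$ one has a slightly larger but still finite list; in each case one checks directly which combinations satisfy both numerical identities, and I would also remove $(\cU^*)^{\oplus 2}$ per the reduction noted before the proposition, as well as any case reducible to a lower symplectic Grassmannian.

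\medskip

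The main obstacle I anticipate is the bookkeeping: unlike the ordinary case, the $sp$-Weyl character formula has the extra product $\prod_{1\le i\le j\le r}\frac{2r+2-j-i+\beta_i+\beta_j}{2r+2-j-i}$, so the rank of a ``symplectic'' summand depends on $n$ in a less transparent way, and one must be careful that a seemingly small weight like $(0,\dots,0;1,0,\dots,0)$ still has rank $2(n-k)$ growing with $n$. The delicate step is to verify that no family of bundles escapes the rank bound for large $n$ — this is where I would argue that any summand with $|\beta|_2\neq 0$ has rank at least $2(n-k)$, forcing $n$ to be small, after which the classification becomes a finite check. Once $n$ is bounded, matching (\ref{symprank}) and (\ref{sympdex}) against the finite list of candidate weights produces precisely the entries of Table~\ref{tablefoursym}, completing the proof for $k\le 3$.
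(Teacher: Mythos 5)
Your overall plan --- reduce to $k=2,3$, use the rank and dex identities (\ref{symprank}), (\ref{sympdex}) together with the ratio bound (\ref{sympineqdexrank}), and finish with a finite check --- is the same general strategy as the paper's, but the step on which your entire argument rests is false. You claim that since any summand with $|\beta|_2\neq 0$ has rank at least $2(n-k)$, comparing with the rank budget $k(2n-k)-\frac{k(k-1)}{2}-4$ ``immediately bounds $n$ from above''. It does not: for $k=2$ the budget is $4n-9$ while $({\cal U}^{\perp}/{\cal U})(1)=(1,1;1,0,\dots,0)$ has rank $2n-4$, and $2n-4\leq 4n-9$ for every $n\geq 3$ (similarly $2n-6\leq 6n-16$ for $k=3$). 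Both the symplectic summand's rank and the budget grow linearly in $n$, the budget faster, so the rank comparison leaves infinitely many pairs $(k,n)$, and your ``finite check'' never becomes finite. Your second paragraph, which uses (\ref{sympdex}) to enumerate summands, is explicitly conditioned on having only finitely many ``surviving $(k,n)$'' from this failed first step, so the gap is not repaired later. (A smaller inaccuracy: you reduce to ``a single symplectic summand'' combining with ordinary ones, but two symplectic summands can occur, as in case $(sc0.4)$ of Table \ref{tablefoursym}.)

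The finiteness actually comes from the \emph{dex} budget, not the rank budget, and this is the heart of the paper's proof. For $k=2$: summands with unbalanced $sl(2)$-part and $|\beta|_2\neq 0$ have rank $\geq 4n-8$ and are excluded by (\ref{symprank}); hence the symplectic summand has balanced first part, and for $n>4$ it must be $(p,p;1,0,\dots,0)$, whose dex is $2p(n-2)$. Equation (\ref{sympdex}) (total dex $=2n-1$) forces $p=1$, and then this single summand consumes dex $2n-4$, leaving dex $3$ for the remaining summands, which must nevertheless supply rank $4n-9-(2n-4)=2n-5$; since each remaining summand satisfies $\frac{dex}{rank}\geq\frac{1}{2}$ by (\ref{sympineqdexrank}), one gets $\frac{3}{2n-5}\geq\frac{1}{2}$, i.e.\ $n\leq 5$, after which everything is a finite check. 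A parallel argument (forcing a ${\cal U}^*=(1,0,\dots,0)$ summand and exhausting the dex budget) handles $k=3$. So your proposal is salvageable, but only by replacing its central mechanism: what bounds $n$ is the linear growth of the dex of the symplectic summand against the budget $2n-k+1$, combined with the ratio inequality --- not the rank comparison you propose.
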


\begin{proof}
$k=2$

In this case $rank({\cal F})=4n-9$, and $dex({\cal F})=2n-1$. If one has the bundle $\lambda$, with $\lambda_1 \neq \lambda_2$ and $|\lambda|_2\neq 0$, then $rank(\lambda)\geq 2(2(n-2))=4n-8$, which is impossible; so, by checking the dimensions of the corresponding modules (and comparing with equations (\ref{symprank}) and (\ref{sympdex})), one remains with the bundles $(p,q;0,...,0)$ for ($p\geq q$), $(p,p;1,0,...,0)$ for $n\geq 3$, and finally for $n=4$ with those mentioned and with $(p,p;1,1)$. Then one checks that this gives the bundles $(sb)$ for $n=3, 4$. For $n>4$, one knows that there must be one summand of the form $(p,p;1,0,...,0)$, and one sees that eq. (\ref{sympdex}) implies $p=1$. But then 
$$
\frac{dex({\cal F})-dex((1,1;1,0,...,0))}{rank({\cal F})-rank((1,1;1,0,...,0))}=\frac{3}{2n-5}\geq \frac{1}{2}
$$ 
by (\ref{sympineqdexrank}), which means $n=5$, for which one can check by hand that there is no other possibility.
\newline
$k=3$

In this case $rank({\cal F})=6n-16$, and $dex({\cal F})=2n-2$. Doing the computation by hand, for $n=4$ one finds all the cases $(sc)$. For a bundle $\beta$ such that there exists $1\leq i \leq k-1$ such that $\beta_i < \beta_{i+1}$ and $|\beta|_2\neq 0$, the minimal value of $dex$ corresponds to the bundle $(2,1,1;1,0,...,0)$. Eq. (\ref{sympdex}) then says that it cannot appear for $n\geq 5$. Then one only has $(p,q,r;0,...,0)$ or $(p,p,p;\beta_{k+1}, ..., \beta_n)$, and $\beta_{k+1}\neq 0$. As a consequence of eq. (\ref{sympineqdexrank}), there is at least once the bundle $(1, 0, ..., 0)$, therefore $rank({\cal F})-rank((1,0,...,0))=6n-19$, $dex({\cal F})-dex((1,0,...,0))=2n-3$. This last equation gives as the only possibility for the second type bundles that $p=1$, and (for $n\geq 6$) $|\beta|_2=1$. Studying separately $n=5$ and $n\geq 6$ one checks that there are no other cases.

\end{proof}

\begin{proof}[Proof of the classification of Table \ref{tablefoursym}]
As a consequence of the previous proposition, it is sufficient to show that for $k\geq 4$, there is no bundle ${\cal F}$ with the good properties. As one knows $rank({\cal F})$ and $dex({\cal F})$, one finds that, except for the case $k=4, n=5$, one of the summands must be $(1,0,..,0)$ (otherwise $\frac{dex({\cal F})}{rank({\cal F})}\geq \frac{2}{k}$). As there cannot be two such bundles, one can write 
$$
\frac{dex({\cal F})-dex((1,0,...,0))}{rank({\cal F})-rank((1,0,...,0))}\geq \frac{2}{k}\,\,\, ,
$$ 
which gives $n\leq k+\frac{1}{2}+\frac{4}{k}$. This implies that the only cases that have to be studied are: $(k,n)=(4,5),(5,6),(6,7),(7,8),(8,9)$. If $(k,n)=(4,5)$, as for at least one bundle $\beta_5\neq 0$, a similar reasoning on $\frac{rank}{dex}$ tells us that there must be one bundle $(1,0,...,0)$. Then, simple combinatorics prevent any bundle to have the good properties. The remaining cases can be inspected explicitely.
\end{proof}

\subsection{Odd Orthogonal Grassmannians}


The odd orthogonal Grassmannian $OGr(k,2n+1)$ will be thought of as the quotient $G/P_k$, where $P_k$ is the maximal parabolic subgroup containing the standard Borel subgroup of positive roots in $G=SO(2n+1, \mathbb{C})$. Every irreducible homogeneous bundle is represented by its highest weight. A weight is represented by $\beta=(\beta_1,...,\beta_n)$ or by $(\beta_1,...,\beta_k;\beta_{k+1},...,\beta_n)$ when this notation is needed, where $\beta=(\beta_1-\beta_2)\lambda_1+\lambda_2(\beta_2-\beta_3)+...+\lambda_{n-1}(\beta_{n-1}-\beta_n)+2\lambda_n \beta_n$, the $\lambda_i$'s are the fundamental weights for $SO(2n+1, \mathbb{C})$, and the $\beta_i$'s are all integers or all half integers. Notice that the parabolic algebra $Lie(P_k)$ has Levi factor $sl(k)\oplus so(2(n-k)+1)$ in general ($k\neq 1, n$), which is straightforward by looking at the Dynkin diagram.

\begin{notation}
As we work with globally generated bundles, from now on the notation will change: to indicate a bundle with highest weight $\beta$ as before, we will write $(-\beta_k, ... , -\beta_1; \beta_{k+1}, ... , \beta_n)$, which is equivalent to taking the highest weight of the dual representation. In this way, a bundle $\alpha=(\alpha_1,...,\alpha_n)$ (according to the new notation) is globally generated when $\alpha_1\geq ...\geq\alpha_n\geq 0$, i.e. when the weight $\alpha$ is dominant under the action of $G$. To understand why we use this "dualized" notation, refer to the explication before Remark \ref{globgenord}.
\end{notation}

\begin{example}
Over the orthogonal Grassmannian \emph{OGr(3,15)}, the dual tautological bundle ${\cal U}^*$ (of rank $3$) will be denoted again by $(1,0,0;0,0,0,0)$, the tautological "orthogonal" bundle ${\cal U}^{\perp}/{\cal U}$ (of rank $9$) by $(0,0,0;1,0,0,0)$. With ${\cal T}_{+\frac{1}{2}}$ we will denote the bundle coming from the representation $(-\frac{1}{2}, ... , -\frac{1}{2};\frac{1}{2}, ... , \frac{1}{2})$.  
\end{example}

The dimension of a bundle can be calculated explicitely: suppose $k\neq 1, n$;
\[
rank(\beta_1,...,\beta_n)=dim_{sl(k)}(\beta_1,...,\beta_k)\times dim_{so(2(n-k)+1)}(\beta_{k+1},...,\beta_n)
\] where
\[
dim_{so(2r+1)}(\beta_1,...\beta_r) = \prod\limits_{1\leq i<j\leq r} \frac{j-i+\beta_i-\beta_j}{j-i} \prod\limits_{1\leq i\leq j\leq r} \frac{2r+1-j-i+\beta_i+\beta_j}{2r+1-j-i}
\]
is the Weyl character formula (see \cite[Chapter 24, Equation 24.29]{Fulton}). This formula is a consequence of the form of the Levi factor of $P$ and of Remark \ref{remrank}. A similar formula holds when $k=1, n$.

The definition of the function \emph{dex} is the same as before (see Remark \ref{remdex}). Therefore, fourfolds with trivial canonical bundle correspond to homogeneous vector bundles ${\cal F}=\sum_i {\cal E}_i$, with
\begin{equation}
\label{oddrank}
\sum_i rank({\cal E}_i)=k(2n+1-k)-\frac{k(k+1)}{2}-4\,\,\, ,
\end{equation}
\begin{equation}
\label{odddex}
\sum_i dex({\cal E}_i)=2n-k\,\,\, .
\end{equation}

\subsubsection{Classification in odd orthogonal Grassmannian}

We recall that $OGr(k, 2n+1)$ is embedded naturally in $Gr(k, 2n+1)$ as the zero locus of a general section of $S^2 \cU^*$. The following lemma is similar to Lemma \ref{lemmaatleast}.

\begin{lemma}
Suppose ${\cal F}=\oplus_i {\cal E}_i$ is as in the hypothesis of Theorem \ref{thm4class}. In order to find new varieties in OGr(k,2n+1) with respect to the case of the ordinary Grassmannian, it is necessary that for at least one bundle ${\cal E}_i=\beta$, $|\beta|_2 \neq 0$ or the $\beta_i$'s are not integers (they can be half integers).
\end{lemma}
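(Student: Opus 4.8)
The plan is to mimic the proof of Lemma \ref{lemmaatleast} exactly, adapting only the embedding and the bundle that cuts out the orthogonal Grassmannian. The key geometric input is that $OGr(k,2n+1)$ sits inside $Gr(k,2n+1)$ as the zero locus of a general section of $S^2\cU^*$, precisely as stated just before the lemma; this replaces the role played by $\Lambda^2\cU^*$ in the symplectic case. So the first step is to record that if ${\cal F}=\oplus_i{\cal E}_i$ is built entirely from bundles whose weights have $|\beta|_2=0$, then each such summand, coming from an irreducible representation of the first factor $sl(k)$ of the Levi, is simply the restriction to $OGr(k,2n+1)$ of the homogeneous bundle over $Gr(k,2n+1)$ attached to the same weight.

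The second step is to combine these restrictions with the defining equation of $OGr(k,2n+1)$ inside $Gr(k,2n+1)$. If every summand of ${\cal F}$ restricts from the ordinary Grassmannian, then the zero locus $Y\subset OGr(k,2n+1)$ is the common zero locus, inside $Gr(k,2n+1)$, of a general section of ${\cal F}\oplus S^2\cU^*$, since imposing the section of ${\cal F}$ and then intersecting with $OGr(k,2n+1)$ is the same as imposing a section of the direct sum $ {\cal F}\oplus S^2\cU^*$ on the ambient ordinary Grassmannian. As the summands all extend and $S^2\cU^*$ is itself homogeneous and completely reducible over $Gr(k,2n+1)$, the variety $Y$ is a zero locus of a homogeneous, completely reducible, globally generated bundle over the \emph{ordinary} Grassmannian, hence was already accounted for in Theorem \ref{thm4ordin}. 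The contrapositive is exactly the claim: to produce a genuinely new variety, at least one summand must fail to restrict from $Gr(k,2n+1)$.

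The third step is to identify, representation-theoretically, when a summand fails to be such a restriction. A completely reducible homogeneous bundle on $OGr(k,2n+1)$ corresponds to an irreducible representation of the Levi $sl(k)\oplus so(2(n-k)+1)$, while one on $Gr(k,2n+1)$ corresponds to an irreducible representation of $sl(k)\oplus sl(2n+1-k)$. A summand restricts from the ordinary Grassmannian precisely when its weight is of the form $(\beta_1,\dots,\beta_k;0,\dots,0)$ with integer entries, so that the second Levi factor acts trivially; equivalently $|\beta|_2=0$ and the $\beta_i$ are integers. The two escape routes are therefore $|\beta|_2\neq 0$, where the $so$-factor acts nontrivially, and the half-integer (spinorial) weights, which have no counterpart over $Gr(k,2n+1)$ since the spin bundles ${\cal T}_{\pm\frac12}$ do not extend. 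This matches the second disjunct in the statement and is the main difference from Lemma \ref{lemmaatleast}.

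The only real obstacle is making the half-integer case clean: one must justify that a weight with half-integer entries genuinely cannot arise as the restriction of a weight over the ordinary Grassmannian. This is where one invokes the fact, noted in the introduction and again when the spin bundles were defined, that the spin representations exist only for $so(2n+1)$ and do not extend to $sl(2n+1-k)$, so no homogeneous bundle on $Gr(k,2n+1)$ restricts to one with half-integer weight. With that observation the argument closes: either some $|\beta_i|_2\neq 0$ or some summand has half-integer entries, for otherwise ${\cal F}$ restricts wholesale from $Gr(k,2n+1)$ and $Y$ has already been classified in Theorem \ref{thm4ordin}.
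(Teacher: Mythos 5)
Your proposal is correct and follows essentially the same route as the paper: the paper proves this lemma by invoking verbatim the argument of Lemma \ref{lemmaatleast} (bundles whose weights have $|\beta|_2=0$ and integer entries are restrictions from $Gr(k,2n+1)$, so if all summands are of this form the variety was already counted in Theorem \ref{thm4ordin}), adding only the remark that half-integer weights correspond to spin representations, which do not extend to the ordinary Grassmannian. Your write-up simply makes explicit the step of combining ${\cal F}$ with the section of $S^2\cU^*$ cutting out $OGr(k,2n+1)$, which the paper leaves implicit.
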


\begin{proof}
The proof is the same as the one for Lemma \ref{lemmaatleast}. Notice only that half integer weights are associated to Spin representations.
\end{proof}

One can assume that $({\cal U}^*)$ over \emph{OGr(k,2n+1)} do not appear as summand in ${\cal F}= \oplus_i {\cal E}_i$;
in fact, taking a zero section locus of this bundle in $OGr(k, 2n+1)$ is equivalent to restricting to the space $OGr(k, 2n)$.

The classification will be made in three steps: one has to distinguish the three particular cases: $k=n$, $k=n-1$, $k\leq n-2$. This is a consequence of the difference in these cases of the Dynkin diagram of $B_n$ with the k-th root removed. The classification in Table \ref{tablefourodd} is a direct consequence of the following propositions.

\begin{proposition}[Classification for $k=n$]
\label{oddk=n}
Over OGr(n,2n+1), as suitable ${\cal F}$ (satisfying the hypothesis of Theorem \ref{thm4class}) one has all and only the cases $(oy)$ appearing in Table \ref{tablefourodd}.
\end{proposition}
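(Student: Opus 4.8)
The plan is to exploit the special shape of the Levi factor when $k=n$. Since $P_n$ is the parabolic attached to the last simple root of $B_n$, its Levi factor is $sl(n)$ alone (the orthogonal block $so(1)$ being trivial), so every summand ${\cal E}_i$ is a Schur functor of ${\cal U}^*$, possibly twisted, and is determined by a weight $\beta=(\beta_1,\dots,\beta_n)$ with $\beta_1\geq\cdots\geq\beta_n\geq 0$ whose entries are all integral or all half-integral. Here $|\beta|_2=0$ automatically, so the preceding lemma tells us that obtaining a genuinely new variety forces at least one half-integral (spinorial) summand; the smallest such is ${\cal T}_{+\frac{1}{2}}={\cal O}(\frac{1}{2})$. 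First I would record the two numerical constraints: specialising (\ref{oddrank}) and (\ref{odddex}) to $k=n$ gives $\sum_i rank({\cal E}_i)=\frac{n(n+1)}{2}-4$ and $\sum_i dex({\cal E}_i)=n$, and I would discard the summand ${\cal U}^*=(1,0,\dots,0)$, which is allowed by the reduction preceding this proposition.

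The heart of the argument is a pair of rank-versus-$dex$ inequalities drawn from $dex(\beta)=\frac{|\beta|_1}{n}rank(\beta)$. For a half-integral weight every $\beta_i\geq\frac{1}{2}$, hence $|\beta|_1\geq\frac{n}{2}$ and $rank(\beta)\leq 2\,dex(\beta)$; for an integral weight other than ${\cal U}^*$ one has $|\beta|_1\geq 2$, hence $rank(\beta)\leq\frac{n}{2}dex(\beta)$. Writing $d_H$ and $d_I$ for the total $dex$ of the half-integral and the integral summands, so that $d_H+d_I=n$ with $d_H\geq\frac{1}{2}$, summation gives
\[
\frac{n(n+1)}{2}-4\;\leq\;\frac{n}{2}d_I+2d_H\;=\;\frac{n^2}{2}-\Bigl(\frac{n}{2}-2\Bigr)d_H .
\]
For $n\geq 5$ the right-hand side is maximised at $d_H=\frac{1}{2}$, and the inequality then simplifies to $\frac{3n}{4}\leq 5$, i.e. $n\leq 6$. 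Since the spinor variety $OGr(n,2n+1)$ has dimension $\frac{n(n+1)}{2}$, which is at least $5$ only for $n\geq 3$, this leaves exactly $n\in\{3,4,5,6\}$; note that for $n=4$ the coefficient $\frac{n}{2}-2$ vanishes, so that case is not constrained by the bound and must simply be kept in the finite list.

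The final step is a direct enumeration for each $n\in\{3,4,5,6\}$. For fixed $n$ the equation $\sum_i dex({\cal E}_i)=n$ together with $dex(\beta)\geq\frac{1}{2}$ leaves only finitely many multisets of weights to examine, and among these one retains those also satisfying the rank equality $\sum_i rank({\cal E}_i)=\frac{n(n+1)}{2}-4$; the survivors are precisely the cases $(oy)$ of Table \ref{tablefourodd}. I expect this finite check to be the main obstacle. The ranks are products of Weyl dimensions that grow quickly, so in practice only weights with very small $|\beta|_1$ can occur; one must nonetheless enumerate these carefully, always respecting the mandatory spinorial summand, and treat by hand the genuinely small spinor varieties $OGr(3,7)$ and $OGr(4,9)$, where low ranks and coincidences among bundles demand individual verification.
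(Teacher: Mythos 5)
Your proposal is correct, and it reaches the finite list by a genuinely different mechanism than the paper. The paper works summand by summand: since every ${\cal E}_i$ must satisfy $dex({\cal E}_i)\leq dex({\cal F})=n$, it eliminates all integral-weight non-line bundles except $(1,1,0,\dots,0)=\Lambda^2{\cal U}^*$ and $(1,\dots,1,0)$, both of $dex$ equal to $n-1$, and then splits into two branches: if such a bundle appears, the leftover budget ($dex=1$, $rank=n-4$) together with the mandatory half-integral summand forces ${\cal O}(\frac{1}{2})^{\oplus 2}$ and $n=6$, i.e.\ case $(oy6)$ of Table \ref{tablefourodd}; if not, every admissible summand has $\frac{dex}{rank}\geq\frac{1}{2}$, so $\frac{n}{\frac{n(n+1)}{2}-4}\geq\frac{1}{2}$ yields $n\leq 4$ and only $OGr(3,7)$ and $OGr(4,9)$ remain to be swept. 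You instead aggregate your two slope inequalities ($rank\leq 2\,dex$ for half-integral weights, $rank\leq\frac{n}{2}\,dex$ for integral ones other than ${\cal U}^*$) into the single bound $\frac{3n}{4}\leq 5$. The trade-off is clear: the paper's branch structure makes the terminal enumeration very small (two Grassmannians plus one forced configuration), while your uniform bound leaves genuine searches at $n=5$ and $n=6$, where you must in particular rediscover $(oy6)$ — whose summand $\Lambda^2{\cal U}^*$ has slope $\frac{dex}{rank}=\frac{2}{n}<\frac{1}{2}$ — by explicit enumeration. Conversely, your route is more robust: the paper's intermediate claim that all remaining non-line bundles have $dex>n$ is not literally true for half-integral twists such as ${\cal U}^*\otimes{\cal O}(\frac{1}{2})$, whose $dex$ is $\frac{n+2}{2}\leq n$ (and which indeed occurs in case $(oy3)$); this imprecision is harmless only because of how the paper's two branches are set up, whereas your integral/half-integral dichotomy needs no pointwise classification of bundles. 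Both proofs end with a finite verification that is asserted rather than written out, and yours does close (at $n=5$ the rank equality eliminates everything, at $n=6$ only $(oy6)$ survives), so the proposal is complete to the same standard as the paper's argument.
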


\begin{proof}
Under the hypothesis of the proposition, $(\frac{1}{2},...,\frac{1}{2})$ is a line bundle, "squared root" of ${\cal O}(1)$. Moreover, one has $rank({\cal F})=\frac{n(n+1)}{2}-4$, $dex({\cal F})=n$. One can study the rate $\frac{dex}{rank}$ and obtain constraints. In fact $dex(2,0,...,0)=n+1$, so this bundle cannot appear. On the other hand $dex(1,1,0,...,0)=n-1=dex(1,...,1,0)$, and for all the other bundles which are not line bundles, $dex$ is greater than $n$. If $(1,1,0,...,0)$ appears, $rank({\cal F})-rank(1,1,0,...,0)=n-4$. As there must be also at least one bundle with half integers, the only possibility is $n=6$, i.e. $(oy6)$. Therefore, except for this case, one checks that for all the other possible bundles, and therefore for ${\cal F}$, $\frac{dex}{rank}\geq \frac{1}{2}$, which implies $n\leq 4$. This gives the other cases $(oy)$.
\end{proof}

\begin{proposition}[Classification for $k=n-1$]
\label{oddk=n-1}
Over OGr(n-1,2n+1), as suitable ${\cal F}$ (satisfying the hypothesis of Theorem \ref{thm4class}) one has all and only the cases $(ox)$ appearing in Table \ref{tablefourodd}.
\end{proposition}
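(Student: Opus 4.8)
The plan is to specialize the two numerical identities to $k=n-1$ and then run the ratio estimate used in Proposition \ref{oddk=n}. Putting $k=n-1$ in (\ref{oddrank}) and (\ref{odddex}) yields
\[
\sum_i rank({\cal E}_i)=\frac{n^2+3n-12}{2},
\qquad
\sum_i dex({\cal E}_i)=n+1.
\]
For $k=n-1$ the Levi factor of $P_k$ is $sl(n-1)\oplus so(3)$, so each weight has a single "orthogonal" entry, $\beta=(\beta_1,\dots,\beta_{n-1};\beta_n)$, with $dim_{so(3)}(\beta_n)=2\beta_n+1$; in particular the spin bundle ${\cal T}_{+\frac{1}{2}}=(\frac{1}{2},\dots,\frac{1}{2};\frac{1}{2})$ has rank $2$ and $dex=1$. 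I would also recall the two standing reductions: ${\cal U}^*=(1,0,\dots,0)$ may be assumed not to occur as a summand, and by the newness lemma preceding this classification at least one summand must satisfy $\beta_n\neq0$ or have half-integer weights.

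First I would bound $n$ by estimating $\frac{dex}{rank}=\frac{|\beta|_1}{n-1}$, as in (\ref{sympineqdexrank}). The dominance condition $\beta_1\geq\cdots\geq\beta_{n-1}\geq\beta_n\geq0$ shows that the only globally generated irreducible bundle with $|\beta|_1=1$ is ${\cal U}^*$: every other integral weight has $|\beta|_1\geq2$, and every half-integral one has $|\beta|_1\geq\frac{n-1}{2}$. Since ${\cal U}^*$ is excluded, for $n\geq5$ every summand of ${\cal F}$ obeys $\frac{dex}{rank}\geq\frac{2}{n-1}$ (the spin ratio $\frac{1}{2}$ then also exceeds $\frac{2}{n-1}$). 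Summing,
\[
n+1=\sum_i dex({\cal E}_i)\geq\frac{2}{n-1}\sum_i rank({\cal E}_i)=\frac{n^2+3n-12}{n-1}=n+4-\frac{8}{n-1},
\]
so $\frac{8}{n-1}\geq3$, i.e. $n\leq\frac{11}{3}$, contradicting $n\geq5$. Hence no admissible ${\cal F}$ exists for $n\geq5$, and only $n=3$ and $n=4$ survive.

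It then remains to inspect $OGr(2,7)$ and $OGr(3,9)$ directly. For each I would enumerate the globally generated irreducible bundles of small rank and $dex$ — the line bundles ${\cal O}(m)$, the bundles $(1,1,0,\dots,0)$ and $(2,0,\dots,0)$, the bundle with $\beta_n=1$, and the spin bundle — and solve the system given by (\ref{oddrank}) and (\ref{odddex}) by hand, discarding any combination built solely from integral weights with $\beta_n=0$ (already accounted for over $Gr(k,2n+1)$). The surviving combinations should be exactly the cases $(ox)$ of Table \ref{tablefourodd}. I expect the real work to be this finite enumeration: one must treat the half-integer spin weights with care, since their dimension and $dex$ behave differently from the integral ones, and check that each candidate meets both the rank and the $dex$ equality together with the newness requirement. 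The preliminary bound makes the list short, so verifying completeness for these two small Grassmannians is the only delicate point.
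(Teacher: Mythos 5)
Your proposal is correct and follows essentially the same route as the paper: specialize the rank and $dex$ identities to $k=n-1$, observe that (with ${\cal U}^*$ excluded and spin weights having ratio $\frac{1}{2}$) every admissible summand satisfies $\frac{dex}{rank}\geq\frac{2}{n-1}$ once $n\geq5$, derive the contradiction $n\leq\frac{11}{3}$, and finish by direct inspection of $OGr(2,7)$ and $OGr(3,9)$. Your justification of the minimal-ratio claim is in fact slightly more explicit than the paper's, which simply asserts it before reducing to $n=3,4$.
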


\begin{proof}
In this case notice that $(0,...,0;\beta)$ is of rank $2\beta+1$. If $n\neq 3, 4$, the minimal ratio $\frac{dex}{rank}$ is $\frac{2}{n-1}$ given by $(1,1,0,...,0)$. But this implies $n\leq 3$. So the only cases to study are $n=3$, $n=4$, and this gives the cases $(ox)$.
\end{proof}

\begin{proposition}[Classification for $k\leq n-2$]
\label{oddk=n-2etmoins}
If $k\leq n-2$, over OGr(k,2n+1), as suitable ${\cal F}$ (satisfying the hypothesis of Theorem \ref{thm4class}) one has all and only the cases $(ob0)$, $(ob0.1)$ appearing in Table \ref{tablefourodd}.
\end{proposition}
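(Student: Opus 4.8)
The plan is to carry out the same $rank$/$dex$ bookkeeping as in Propositions \ref{oddk=n} and \ref{oddk=n-1}, now exploiting that for $k\leq n-2$ the orthogonal Levi factor $so(2(n-k)+1)$ has rank $m:=n-k\geq 2$, so that a genuine orthogonal summand is forced to be expensive. First I would keep the defining equations (\ref{oddrank}) and (\ref{odddex}) in the form $\sum_i dex(\cE_i)=2n-k$ and $\sum_i rank(\cE_i)=k(2n+1-k)-\tfrac{k(k+1)}{2}-4$, and, following the trick of Remark \ref{remlemmred}, combine them into
\[
\sum_i rank(\cE_i)\bigl(|\beta^i|_1-1\bigr)=\binom{k}{2}+4 .
\]
Throughout I use that $\cU^*=(1,0,\dots,0)$ has been excluded and that, by the lemma preceding this proposition, at least one summand is \emph{special}, i.e.\ has $|\beta|_2\neq0$ or half-integer entries.

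The first key step is a dominance observation. A globally generated weight satisfies $\alpha_1\geq\dots\geq\alpha_n\geq0$, so $\alpha_k\geq\alpha_{k+1}$; hence any \emph{integer} special summand (one with $|\beta|_2\neq0$) has $\alpha_{k+1}\geq1$, which forces $\alpha_1=\dots=\alpha_k\geq1$ and therefore $|\beta|_1\geq k$. In particular $\cU^\perp/\cU=(0,\dots,0;1,0,\dots,0)$ is \emph{not} globally generated and cannot occur, and the cheapest integer special summand is $(1,\dots,1;1,0,\dots,0)$, of $rank=2m+1$ and $dex=2m+1$. Since $\cU^*$ is the only bundle with $|\beta|_1=1$, every admissible summand has ratio $\tfrac{dex}{rank}=\tfrac{|\beta|_1}{k}\geq\tfrac{2}{k}$ (cf.\ (\ref{sympineqdexrank})), \emph{except} the spin bundle $\cT_{+\frac12}$, which has $|\beta|_1=\tfrac{k}{2}$, hence ratio $\tfrac12$, together with $rank=2^{\,m}$ and $dex=2^{\,m-1}$.

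I would then compare the global ratio $\tfrac{\sum dex}{\sum rank}$, which is of size $\tfrac1k$ once $m$ is large, against these per-summand lower bounds; this is the heart of the argument and the main obstacle. If no spin bundle is present, every summand has ratio $\geq\tfrac2k$ (for $k=2$, $\geq1$), and feeding this into the global ratio yields $k(2m+1)\leq 8$ (respectively $n\leq\tfrac72$ when $k=2$), impossible for $m\geq2$; so a spin bundle must appear. As $\cT_{+\frac12}$ is the unique summand of ratio below $\tfrac2k$, the same crude ratio bound eliminates $k\geq4$ outright and leaves only the borderline $k=3,\,m=2$ ($OGr(3,11)$); there a weighted-average argument finishes the job, since the single spin summand (ratio $\tfrac12$) cannot by itself pull the average down to the required $\tfrac12$ while every other summand has ratio $\geq\tfrac23$. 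This delicate spin-versus-budget balancing, made possible precisely by the exclusion of $\cU^*$ raising the ratio floor, is where the finiteness is proved.

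Having reduced to $k=2$ with a spin summand present, I would use $rank(\cT_{+\frac12})=2^{\,n-2}\leq rank(\cF)=4n-9$ to force $n\leq5$, so that only $OGr(2,9)$ and $OGr(2,11)$ survive; in both, removing $\cT_{+\frac12}$ leaves the same complement of $rank\,3$ and $dex\,4$. Enumerating by hand the $sl(2)$-bundles filling it, the only possibilities are $\cO(2)\oplus\cO(1)^{\oplus2}$ and $(2,1)\oplus\cO(1)$, which are exactly cases $(ob0)$ and $(ob0.1)$ (appearing for both values of $n$). Finally, as in the previous propositions, a direct inspection of these finitely many configurations together with the smoothness guaranteed by Remark \ref{remsmoothness} completes the classification.
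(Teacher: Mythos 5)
Your overall strategy is the paper's: a per-summand lower bound on $\frac{dex}{rank}$ forces a spin summand and kills large $k$, then the rank bound $2^{n-2}\leq 4n-9$ bounds $n$, and one enumerates the complement. Two of your refinements are genuinely nicer than the paper's proof: the identity $\sum_i rank({\cal E}_i)(|\beta^i|_1-1)={k \choose 2}+4$, and the weighted-average argument at $k=3$, which pins the problem to $OGr(3,11)$ and kills it by rank divisibility, whereas the paper inspects $5\leq n\leq 8$ case by case. Your identification of the two complements with $(ob0)$ and $(ob0.1)$ is also correct, once one notes that your ``$\cT_{+\frac12}$'' (dominant weight with all entries $\frac12$, rank $2^m$, $dex=2^{m-1}$) is the bundle the paper denotes $\cT_{+\frac12}(1)$.

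There is, however, a genuine problem at the very end: your parenthetical claim that $(ob0)$ and $(ob0.1)$ appear \emph{for both values of $n$} contradicts the proposition you are asked to prove. Table \ref{tablefourodd} lists these cases only over $OGr(2,9)$, and the paper's proof explicitly asserts ``no case for $n=5$''. You are right that the two numerical constraints (\ref{oddrank}) and (\ref{odddex}) cannot distinguish $n=4$ from $n=5$: in both cases removing the spin summand leaves rank $3$ and $dex$ $4$, so the candidate bundles over $OGr(2,11)$ pass all the bookkeeping. But that is precisely why the bookkeeping does not finish the argument. Whether a candidate ``appears'' in the classification is the geometric statement that a general section of the rank-$8$ spin bundle on the $15$-dimensional $OGr(2,11)$ cuts out a nonempty smooth sevenfold, to be further intersected with sections of $\cO(2)\oplus\cO(1)^{\oplus 2}$ (equivalently, a statement about the relevant top Chern class, or about the zero locus attached to a generic $Spin(11)$-spinor); this cannot be read off from $rank$ and $dex$. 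You assert realizability over $OGr(2,11)$ without any justification, and in the direction opposite to the statement and to the paper; the paper, for its part, settles $n=5$ by an inspection it does not spell out. Either way, as written your argument does not prove the stated proposition: you must either supply the geometric exclusion of $n=5$, or, if you believe the table is incomplete, actually prove nonemptiness of the generic zero locus — neither is in your text. This is the one place where the classification genuinely requires more than the numerical identities, and it is exactly the step missing from your proof.

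A second, smaller gap: you never treat $k=1$, and your ratio floor fails there. For $k=1$ the weight $(1;1,0,\dots,0)$ is dominant, is not $\cU^*$, and has $\frac{dex}{rank}=1<2=\frac{2}{k}$, so the claim ``every admissible summand has ratio at least $\frac2k$ except the spin bundle'' is false; moreover your inequality $k(2m+1)\leq 8$ is satisfied for $k=1$, $m\leq 3$, so the ``impossible for $m\geq 2$'' step does not cover $k=1$. The paper disposes of this case separately, and the quick fix is a rank count: over $OGr(1,2n+1)$ one has $rank(\cF)=2n-5$, while any special summand contains either the vector representation of $so(2n-1)$ (rank $\geq 2n-1$) or the spin representation (rank $2^{n-1}$), both too large.
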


\begin{proof}
With this hypothesis too, except for $k\leq 3$, we have $\frac{dex({\cal F})}{rank({\cal F})}\geq \frac{2}{k}$, which implies $k\leq 1$. So three cases have to be considered.

If $k=1$, one easily sees that no possibility matches the requirements.

If $k=2$, $rank({\cal F})=4n-9$, $dex({\cal F})=2n-2$, and if $n\geq 4$ then $\frac{dex({\cal F})}{rank({\cal F})}<1$, therefore there must be at least one bundle $(\frac{1}{2},...,\frac{1}{2})$, whose rank is $2^{n-2}$. This means that $2^{n-2}\leq 4n-9$, so $3\leq n \leq 5$. For $n=4$, one gets the cases $(ob0)$, $(ob0.1)$, and no case for $n=5$.

If $k=3$, the same argument as before gives $5\leq n\leq 8$, and inspecting case by case one finds that no other variety arises.
\end{proof}

\subsection{Even Orthogonal Grassmannians}


The even orthogonal Grassmannian $OGr(k,2n)$ will be thought of as the quotient $G/P_k$, where $P_k$ is the maximal parabolic subgroup containing the standard Borel subgroup of positive roots in $G=SO(2n, \mathbb{C})$. Every irreducible homogeneous bundle is represented by its highest weight. A weight is represented by $\beta=(\beta_1,...,\beta_n)$ or by $(\beta_1,...,\beta_k;\beta_{k+1},...,\beta_n)$ when this notation is needed, where $\beta=(\beta_1-\beta_2)\lambda_1+\lambda_2(\beta_2-\beta_3)+...+\lambda_{n-1}(\beta_{n-1}-\beta_n)+\lambda_n (\beta_{n-1}+\beta_n)$, the $\lambda_i$'s are the fundamental weights for $SO(2n, \mathbb{C})$, and the $\beta_i$'s are all integers or all half integers. Notice that the parabolic algebra $Lie(P_k)$ has Levi factor $sl(k)\oplus so(2(n-k))$ in general ($k\neq 1, n, n-1$), which is straightforward by looking at the Dynkin diagram.

\begin{notation}
As we work with globally generated bundles, from now on the notation will change: to indicate a bundle with highest weight $\beta$ as before, we will write $(-\beta_k, ... , -\beta_1; \beta_{k+1}, ... , \beta_n)$, which is equivalent to taking the highest weight of the dual representation. In this way, a bundle $\alpha=(\alpha_1,...,\alpha_n)$ (according to the new notation) is globally generated when $\alpha_1\geq ...\geq \alpha_{n-1} \geq|\alpha_n|$, i.e. when the weight $\alpha$ is dominant under the action of $G$. To understand why we use this "dualized" notation, refer to the explication before Remark \ref{globgenord}.
\end{notation}

\begin{example}
Over the orthogonal Grassmannian \emph{OGr(3,14)}, with ${\cal T}_{\pm \frac{1}{2}}$ we will denote $(-\frac{1}{2}, ... , -\frac{1}{2};\frac{1}{2}, ... , \frac{1}{2}, \pm \frac{1}{2})$ the bundle coming from the spin representations.
\end{example} 

The dimension of a bundle can be calculated explicitely: suppose $k\neq 1, n, n-1$; then
\[
rank(\beta_1,...,\beta_n)=dim_{sl(k)}(\beta_1,...,\beta_k)\times dim_{so(2(n-k))}(\beta_{k+1},...,\beta_n)
\] where
\[
dim_{so(2r)}(\beta_1,...\beta_r) = \prod\limits_{1\leq i<j\leq r} \frac{j-i+\beta_i-\beta_j}{j-i} \prod\limits_{1\leq i<j\leq r} \frac{2r-j-i+\beta_i+\beta_j}{2r-j-i}
\]
is the Weyl character formula (see \cite[Chapter 24, Equation 24.41]{Fulton}). This formula is a consequence of the form of the Levi factor of $P$ and of Remark \ref{remrank}. A similar formula holds when $k=1, n$.

The definition of the function \emph{dex} is the same as before (see Remark \ref{remdex}). Therefore, fourfolds with trivial canonical bundle correspond to homogeneous vector bundles ${\cal F}=\sum_i {\cal E}_i$, with
\begin{equation}
\label{evenrank}
\sum_i rank({\cal E}_i)=k(2n-k)-\frac{k(k+1)}{2}-4\,\,\, ,
\end{equation}
\begin{equation}
\label{evendex}
\sum_i dex({\cal E}_i)=2n-k-1\,\,\, .
\end{equation}

\subsubsection{Classification in even orthogonal Grassmannian}

We recall that $OGr(k, 2n)$ is embedded naturally in $Gr(k, 2n)$ as the zero locus of a general section of $S^2 \cU^*$. The following lemma is similar to Lemma \ref{lemmaatleast}.

\begin{lemma}
Suppose ${\cal F}=\oplus_i {\cal E}_i$ is as in the hypothesis of Theorem \ref{thm4class}. In order to find new varieties in OGr(k,2n) with respect to the case of the ordinary Grassmannian, it is necessary that for at least one bundle ${\cal E}_i=\beta$, $|\beta|_2 \neq 0$ or the $\beta_i$'s are not integers (they can be half integers).
\end{lemma}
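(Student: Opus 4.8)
The plan is to follow almost verbatim the argument used for Lemma \ref{lemmaatleast} in the symplectic case and for its odd orthogonal analogue, the only genuinely new point being the role of the half-integer (spin) weights. First I would recall that $OGr(k,2n)$ sits inside $Gr(k,2n)$ as the zero locus of a general section of $S^2\cU^*$, and that under this embedding the tautological bundle of $OGr(k,2n)$ is the restriction of the tautological bundle of $Gr(k,2n)$. Consequently, any irreducible homogeneous bundle on $OGr(k,2n)$ whose weight is of the form $\beta=(\beta_1,\dots,\beta_k,0,\dots,0)$ with all $\beta_i$ integers is precisely the restriction of the bundle attached to the same weight $\beta$ over the ordinary Grassmannian, since such a weight only involves the $sl(k)$-part of the Levi factor and is insensitive to the orthogonal structure.

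Next, suppose all summands $\cE_i=\beta^i$ of $\cF$ satisfy $|\beta^i|_2=0$ and have integral entries. Then each $\cE_i$ restricts from $Gr(k,2n)$, so $\cF$ itself is the restriction of a completely reducible homogeneous bundle $\cG$ on $Gr(k,2n)$. The variety $Y$ is then the common zero locus of a general section of $\cG$ together with the section of $S^2\cU^*$ cutting out $OGr(k,2n)$; equivalently, $Y$ is the zero locus of a general section of $\cG\oplus S^2\cU^*$ on $Gr(k,2n)$. As this bundle is again homogeneous, completely reducible and globally generated, such a $Y$ has already been accounted for in Theorem \ref{thm4ordin}, and therefore cannot be new. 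Taking the contrapositive yields the claim.

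Finally, I would point out where the even orthogonal case truly departs from the ordinary one, which is also the only subtle point: a weight $\beta$ with all $\beta_i\in\tfrac12+\ZZ$ corresponds to a spin representation of the Levi factor $so(2(n-k))$, and the associated homogeneous bundle (such as $\cT_{\pm\frac12}$) does \emph{not} extend to a homogeneous bundle on $Gr(k,2n)$, precisely because the spin representations are special to the orthogonal group. Hence such bundles genuinely escape the ordinary classification, which is why the statement must allow the alternative ``the $\beta_i$'s are not integers''. The main obstacle, then, is not any computation but the clean identification of the dichotomy: bundles whose weights are both integral and supported on the first $k$ coordinates descend from $Gr(k,2n)$, while everything else (either $|\beta^i|_2\neq 0$ or half-integral entries) is potentially new; once this is observed the lemma follows exactly as in Lemma \ref{lemmaatleast}.
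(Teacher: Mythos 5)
Your proof is correct and takes essentially the same route as the paper: the paper's own proof simply says the argument is identical to that of Lemma \ref{lemmaatleast} (integral weights supported on the first $k$ coordinates restrict from $Gr(k,2n)$, so such an $\cF$ yields a variety already covered by Theorem \ref{thm4ordin} via the bundle $\cF\oplus S^2\cU^*$), noting only that half-integer weights correspond to spin representations. Your write-up just makes these same two observations explicit.
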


\begin{proof}
The proof is the same as the one for Lemma \ref{lemmaatleast}. Notice only that half integer weights are associated to Spin representations.
\end{proof}

One can assume that ${\cal U}^*$ over \emph{OGr(k,2n)} does not appear as summand in ${\cal F}= \oplus_i {\cal E}_i$;
in fact, taking a zero section locus of this bundle in $OGr(k, 2n)$ is equivalent to restricting to the space $OGr(k, 2n-1)$.

The classification will be made in three steps: one has to distinguish the three particular cases: $k=n$, $k=n-2$, $k\leq n-3$. This is due to the difference in these cases of the Dynkin diagram of $D_n$ with the k-th root removed. The classification of Table \ref{tablefoureven} is a direct consequence of the following propositions.

\begin{proposition}[Classification for $k=n$]
\label{evenk=n}
Over OGr(n,2n), as suitable ${\cal F}$ (satisfying the hypothesis of Theorem \ref{thm4class}) one has all and only the cases $(oz)$ appearing in Table \ref{tablefoureven}.
\end{proposition}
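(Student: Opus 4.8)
The plan is to follow the pattern of Proposition~\ref{oddk=n}, replacing the odd orthogonal data by the even one. First I would record the two numerical constraints: specializing \eqref{evenrank}--\eqref{evendex} to $k=n$ gives $rank(\cF)=\frac{n(n-1)}{2}-4$ and $dex(\cF)=n-1$. Since the node removed from the Dynkin diagram $D_n$ is a fork node, the Levi factor is $sl(n)$ with no orthogonal summand, so every $\cE_i$ has $|\beta^i|_2=0$; hence by the even-orthogonal analogue of Lemma~\ref{lemmaatleast}, in order to produce a genuinely new variety at least one summand must carry half-integer weights, i.e. be spinorial. The cheapest such bundle is the square root $\cO(\frac12)=(\frac12,\dots,\frac12)$, a line bundle with $dex=\frac12$. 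Throughout I would use the identity $\frac{dex(\beta)}{rank(\beta)}=\frac{|\beta|_1}{n}$, valid for $k=n$.

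The key step is to bound which bundles can occur. As $\sum_i dex(\cE_i)=n-1$, every summand satisfies $dex(\cE_i)\le n-1$. A short computation shows that, apart from line bundles and the excluded $\cU^*=(1,0,\dots,0)$, the only bundles meeting this bound are $\Lambda^2\cU^*=(1,1,0,\dots,0)$ and $\Lambda^{n-1}\cU^*=(1,\dots,1,0)$, both of $dex=n-1$, together with the twisted tautological bundle $\cU\otimes\cO(\frac12)=(\frac12,\dots,\frac12,-\frac12)$, of rank $n$ and $dex=\frac{n-2}{2}$; every other irreducible homogeneous bundle has $dex>n-1$ (a handful of extra candidates appearing only for very small $n$ are inspected directly). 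Now the dex-budget argument finishes the combinatorics: because a spinorial summand is obligatory and contributes $dex\ge\frac12>0$, neither $\Lambda^2\cU^*$ nor $\Lambda^{n-1}\cU^*$ can appear, since each alone already exhausts the whole budget $n-1$. Hence every non-spinorial summand is a line bundle, and the obligatory spinorial summand is either a half-integer line bundle or $\cU\otimes\cO(\frac12)$.

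Comparing the two constraints then forces $n$ to be small. Since line bundles have rank $1$ and $dex\ge\frac12$, in the first case $rank(\cF)=\frac{n(n-1)}{2}-4$ equals the number of summands, while $\sum_i dex(\cE_i)=n-1$ forces $\frac{n(n-1)}{2}-4\le 2(n-1)$, i.e. $n^2-5n-4\le0$ and $n\le5$. In the case where $\cU\otimes\cO(\frac12)$ occurs one removes its contribution first (the remaining summands being again line bundles, as $\Lambda^2\cU^*$ would overshoot the residual budget $\frac{n}{2}$), obtaining $\frac{n(n-3)}{2}-4\le n$, i.e. $n^2-5n-8\le0$ and $n\le6$. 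For the finitely many values $n\le6$ I would enumerate by hand all decompositions $\cF=\oplus_i\cE_i$ satisfying \eqref{evenrank}--\eqref{evendex} and containing a spinorial summand; this produces exactly the cases $(oz)$ of Table~\ref{tablefoureven}.

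The main obstacle is the correct enumeration of the spinorial (half-integer weight) bundles on $OGr(n,2n)$ and of their invariants $rank$ and $dex$: one must be careful that $OGr(n,2n)$ sits at a fork node of $D_n$ and has two components, so that the two half-spin weights and the sign of the last coordinate $\beta_n$ genuinely distinguish bundles. In particular, identifying $\cU\otimes\cO(\frac12)=(\frac12,\dots,\frac12,-\frac12)$ as the unique genuine spinor bundle surviving the bound $dex\le n-1$ is the delicate point; once this enumeration is in place, the dex-budget argument and the final small-$n$ check are routine.
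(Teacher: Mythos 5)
Your overall strategy (dex budget driven by a forced spinorial summand, then a small-$n$ enumeration) is the same as the paper's, but there is a genuine gap in your key enumeration step, and it sits exactly at the point you yourself flag as delicate. The claim that, apart from line bundles and the excluded ${\cal U}^*$, the only irreducible homogeneous globally generated bundles with $dex\le n-1$ are $\Lambda^2{\cal U}^*$, $\Lambda^{n-1}{\cal U}^*$ and ${\cal U}\otimes{\cal O}(\frac{1}{2})=(\frac{1}{2},\dots,\frac{1}{2},-\frac{1}{2})$ is false: the bundle ${\cal U}^*\otimes{\cal O}(\frac{1}{2})$, of weight $(\frac{3}{2},\frac{1}{2},\dots,\frac{1}{2})$, has rank $n$ and $dex=\frac{n+2}{2}$, which is $\le n-1$ for every $n\ge 4$; it is not a "very small $n$" exception that disappears in the stable range. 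Since your subsequent case analysis allows only line bundles and ${\cal U}\otimes{\cal O}(\frac{1}{2})$ as summands, your hand enumeration for $n\le 6$ cannot produce the case $(oz6)={\cal O}(\frac{1}{2})\oplus{\cal U}^*(\frac{1}{2})$ on $OGr(5,10)$, which does occur in Table \ref{tablefoureven} and does satisfy both constraints (rank $1+5=\frac{5\cdot 4}{2}-4$ and dex $\frac{1}{2}+\frac{7}{2}=4=n-1$). So the "all" half of the proposition fails under your argument.

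The paper avoids this trap by never attempting a complete list of bundles with $dex\le n-1$. Its steps are: (i) equations (\ref{evenrank})--(\ref{evendex}) rule out two copies of $(\frac{1}{2},\dots,\frac{1}{2},-\frac{1}{2})$; (ii) once a half-integer summand is forced, the dex budget excludes every integer-coefficient non-line bundle, since each such bundle has $dex\ge n-1$ (this is the part of your argument that is correct); (iii) every remaining admissible summand other than $(\frac{1}{2},\dots,\frac{1}{2},-\frac{1}{2})$ --- in particular all half-integer line bundles \emph{and} ${\cal U}^*(\frac{1}{2})$ --- satisfies $\frac{dex}{rank}\ge\frac{1}{2}$, which yields $n\le 6$ when $(\frac{1}{2},\dots,\frac{1}{2},-\frac{1}{2})$ occurs and $n\le 5$ otherwise; the final inspection is then carried out over this larger class of summands, and it is there that $(oz6)$ is found. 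To repair your proof, replace your trichotomy by: non-spinorial summands must be line bundles, while a spinorial summand may be a half-integer line bundle, ${\cal U}\otimes{\cal O}(\frac{1}{2})$, or ${\cal U}^*\otimes{\cal O}(\frac{1}{2})$ (one should check that all further half-integer twists, e.g. $\Lambda^j{\cal U}^*\otimes{\cal O}(\frac{1}{2})$ for $j\ge 2$, overshoot the budget), and then redo the enumeration for $n\le 6$.
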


\begin{proof}
The only bundles that do not appear in the classical Grassmannian are those with half integer coefficients. One checks that equations (\ref{evenrank}) and (\ref{evendex}) make it impossible to have twice the bundle $(\frac{1}{2},...,\frac{1}{2}, -\frac{1}{2})$. If it appears even once, eq. (\ref{evendex}) implies that no bundle with integer coefficients which is not a line bundle can appear, therefore $\frac{dex({\cal F})}{rank({\cal F})}\geq \frac{1}{2}$, which gives $n=5, 6$. One obtains therefore $(oz1)$ and $(oz2)$. If the bundle $(\frac{1}{2},...,\frac{1}{2}, -\frac{1}{2})$ is not present, the same argument as before gives that $\frac{dex({\cal F})}{rank({\cal F})}\geq \frac{1}{2}$, which gives the remaining cases $(oz)$.
\end{proof}

\begin{proposition}[Classification for $k=n-2$]
\label{evenk=n-2}
Over OGr(n-2,2n), as suitable ${\cal F}$ (satisfying the hypothesis of Theorem \ref{thm4class}) one has all and only the cases $(ow)$ appearing in Table \ref{tablefoureven}.
\end{proposition}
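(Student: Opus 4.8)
The plan is to follow the pattern of the preceding Proposition~\ref{evenk=n}: specialise the two numerical identities to $k=n-2$, use a $\frac{dex}{rank}$ estimate to bound $n$, and then dispose of the finitely many small cases by direct inspection. Setting $k=n-2$ in \eqref{evenrank} and \eqref{evendex} yields
\[
\sum_i rank({\cal E}_i)=\frac{n^2+3n-18}{2},\qquad \sum_i dex({\cal E}_i)=n+1.
\]
What singles out this case from $k\le n-3$ is that the orthogonal factor of the Levi is $so(4)\cong sl(2)\oplus sl(2)$; concretely the orthogonal part of a weight is a pair $(\beta_{n-1},\beta_n)$ with $dim_{so(4)}(\beta_{n-1},\beta_n)=(1+\beta_{n-1}-\beta_n)(1+\beta_{n-1}+\beta_n)$, the last coordinate $\beta_n$ may be negative, and there appear two half-spin bundles ${\cal T}_{\pm\frac{1}{2}}$ of rank $2$ with $dex=1$. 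I would catalogue these low-rank building blocks first, since they are exactly the summands that are missing in the generic case.

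Next I would bound $n$. Global generation forces the weight of every summand to be non-increasing, $\alpha_1\ge\cdots\ge\alpha_{n-1}\ge|\alpha_n|\ge 0$; hence any nonzero summand has $\alpha_1>0$, so $|\beta|_1>0$ and $dex>0$, and there are no ``free'' purely orthogonal summands to absorb rank. The unique globally generated integer bundle with $|\beta|_1=1$ is ${\cal U}^*$, which has already been discarded (its zero locus restricts $OGr(k,2n)$ to $OGr(k,2n-1)$). Consequently every remaining integer summand has $|\beta|_1\ge 2$ and hence $\frac{dex}{rank}=\frac{|\beta|_1}{k}\ge\frac{2}{n-2}$, while every half-integer summand has $|\beta|_1\ge \frac{k}{2}$, so $\frac{dex}{rank}\ge\frac12$ (compare \eqref{sympineqdexrank}).

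For $n\ge 6$ one has $\frac12\ge\frac{2}{n-2}$, so every summand satisfies $\frac{dex}{rank}\ge\frac{2}{n-2}$; summing the two identities above,
\[
\frac{2(n+1)}{n^2+3n-18}=\frac{dex({\cal F})}{rank({\cal F})}\ge\frac{2}{n-2},
\]
which reduces to $n\le 4$, a contradiction. Hence $n\le 5$. It then remains to examine the cases $n\le 5$, i.e. $k\le 3$, by hand: solve the two displayed equations over the globally generated, completely reducible bundles, using the explicit $so(4)$ dimension formula together with the half-spin bundles, and discard the solutions that already occur over the ordinary Grassmannian or in the $k=n$ list; this produces precisely the cases $(ow)$ of Table~\ref{tablefoureven}.

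I expect the main obstacle to be this last enumeration rather than the bound itself. Because $so(4)$ splits, there are more low-rank pieces in play (two half-spin bundles, and summands with $\beta_n$ of either sign), so matching the $rank$ and the $dex$ equations simultaneously, while respecting global generation and the exclusions, is the delicate bookkeeping step; the ratio estimate, by contrast, is clean and essentially forces everything into a small finite range.
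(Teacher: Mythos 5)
Your proof is correct and takes essentially the same route as the paper: identify the Levi factor $sl(n-2)\oplus sl(2)\oplus sl(2)$ (your $so(4)$ dimension formula is exactly the paper's $dim_{sl(2)}(\beta_{n-1},\beta_n)\cdot dim_{sl(2)}(\beta_{n-1},-\beta_n)$), show that for $n\geq 6$ every admissible summand has $\frac{dex}{rank}\geq\frac{2}{n-2}$ --- the paper attributes this minimal ratio to $(1,1,0,\dots,0)$ --- so the two numerical identities force $n\leq 4$, a contradiction, and then settle $n=3,4,5$ by direct inspection. You merely make explicit what the paper leaves implicit (the specialized rank/dex sums, why integer summands other than the excluded ${\cal U}^*$ have $|\beta|_1\geq 2$, and the spin-bundle bound $\frac{dex}{rank}\geq\frac12$).
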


\begin{proof}
In this case the Levi factor of $Lie(P)$ is given by the Dynkin diagram $D_n$ where the (n-2)-th root has been removed. Therefore its semisimple part is equal to $sl(n-2)\oplus sl(2)\oplus sl(2)$. Then, following Remark \ref{remrank}, we have for a bundle $\beta=(\beta_1,...,\beta_n)$,
\[
rank(\beta)=dim_{sl(n-2)}(\beta_1,...,\beta_{n-2})\times dim_{sl(2)}(\beta_{n-1},\beta_n)\times dim_{sl(2)}(\beta_{n-1},-\beta_n)
\]
If $n\neq 3, 4, 5$, the minimal ratio $\frac{dex}{rank}$ is $\frac{2}{n-2}$ given by $(1,1,0,...,0)$. But this implies $n\leq 4$. So the only cases to study are $n=3$, $n=4$ and $n=5$, and this gives the cases $(ow)$.
\end{proof}

\begin{proposition}[Classification for $k\leq n-3$]
\label{evenk=n-3etmoins}
If $k\leq n-3$, over OGr(k,2n), as suitable ${\cal F}$ (satisfying the hypothesis of Theorem \ref{thm4class}) one has all and only the cases $(ob)$ appearing in Table \ref{tablefoureven}.
\end{proposition}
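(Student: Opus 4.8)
The plan is to follow the pattern of Proposition~\ref{oddk=n-2etmoins}: first reduce to small $k$ by a $dex$-to-$rank$ estimate, and then, for the surviving values of $k$, bound $n$ using the exponential rank of the spin bundles. The key observation is that global generation (dominance of the weight) forces any summand with $|\beta|_2\neq 0$ to satisfy $\beta_k\geq\beta_{k+1}\geq 1$, hence $|\beta|_1\geq k$ and $\frac{dex}{rank}=\frac{|\beta|_1}{k}\geq 1$; while any integer summand with $|\beta|_2=0$ other than the excluded bundle ${\cal U}^*$ has $|\beta|_1\geq 2$, hence $\frac{dex}{rank}\geq\frac{2}{k}$; and a spin bundle has $\frac{dex}{rank}=\frac12$.

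First I would dispose of $k\geq 4$. For such $k$ all three types of summand satisfy $\frac{dex}{rank}\geq\frac{2}{k}$ (since $\frac12\geq\frac{2}{k}$ and $1\geq\frac{2}{k}$), so that $dex({\cal F})\geq\frac{2}{k}rank({\cal F})$. Substituting (\ref{evenrank}) and (\ref{evendex}) and simplifying, this inequality becomes $k^2+4\geq kn$, i.e. $n\leq k+\frac{4}{k}$; but $k\leq n-3$ gives $n\geq k+3$, and $k+3\leq k+\frac{4}{k}$ forces $k\leq\frac43$, a contradiction. Hence $k\leq 3$.

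Next I would bound $n$ for $k\in\{1,2,3\}$. The case $k=1$ is immediate: on the quadric $OGr(1,2n)$ the smallest globally generated summand with $|\beta|_2\neq 0$ or half-integer weight (namely ${\cal U}^{\perp}/{\cal U}$, of rank $2n-2$, or a spin bundle, of rank $2^{\,n-2}$) already has rank exceeding $rank({\cal F})=2n-6$, so the requirement of the Lemma of this subsection (that some summand have $|\beta|_2\neq 0$ or half-integer weight) cannot be met and no new variety arises. For $k=2,3$ I would show that a \emph{spin} summand is forced: an integer summand with $|\beta|_2\neq 0$ has $|\beta|_1\geq k$ and rank at least $2(n-k)$ (the vector representation of $so(2(n-k))$), so its excess $dex-\frac1k rank=\frac{|\beta|_1-1}{k}rank$ is at least $\frac{k-1}{k}2(n-k)$; since the total excess $dex({\cal F})-\frac1k rank({\cal F})=\frac{k-1}{2}+\frac{4}{k}$ is constant in $n$, such a summand is impossible for all $n\geq k+3$, and that same requirement then forces a spin bundle. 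Its rank $2^{\,n-k-1}$ cannot exceed $rank({\cal F})=k(2n-k)-\frac{k(k+1)}{2}-4$, which yields $n\leq 7$ for $k=2$ and $n\leq 9$ for $k=3$.

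Finally, for each of the finitely many surviving pairs $(k,n)$ I would enumerate by hand the decompositions ${\cal F}=\oplus_i{\cal E}_i$ into globally generated irreducibles (necessarily containing a spin bundle) compatible with (\ref{evenrank}) and (\ref{evendex}), discard the non-dominant weights and those whose zero locus already appears over the ordinary Grassmannian, and read off the survivors as the cases $(ob)$ of Table~\ref{tablefoureven}. I expect this last step to be the main obstacle: the bookkeeping of which combinations of a spin bundle with line bundles and other $sl(k)$-representations simultaneously meet both numerical constraints while staying globally generated is delicate, and it is here (rather than in the uniform reduction) that the exhaustiveness of the list must be verified.
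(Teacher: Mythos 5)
Your proposal follows essentially the same route as the paper's proof: the $\frac{2}{k}$ ratio estimate to reduce to $k\leq 3$, direct elimination of $k=1$, forcing a spin summand for $k=2,3$, bounding $n$ by comparing the spin rank $2^{n-k-1}$ with $rank({\cal F})$ (giving $5\leq n\leq 7$, resp. $6\leq n\leq 9$), and a finite case-by-case check against equations (\ref{evenrank}) and (\ref{evendex}). Your additive excess argument with $dex-\frac{1}{k}rank$ actually makes the spin-forcing step more rigorous than the paper's terse appeal to the odd orthogonal case, and the only substantive divergence is at $k=3$, where the paper avoids enumeration altogether by observing that $\frac{dex({\cal F})}{rank({\cal F})}<\frac{1}{2}$ for $6\leq n\leq 9$ while every admissible summand has ratio at least $\frac{1}{2}$.
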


\begin{proof}
With this hypothesis, except for $k\leq 3$, we have $\frac{dex({\cal F})}{rank({\cal F})}\geq \frac{2}{k}$, which implies $k\leq 1$. So three cases have to be considered.

If $k=1$, one easily sees that no possibility matches the requirements.

If $k=2$, as in the analogous proposition for the odd orthogonal Grassmannian, there must be at least one bundle $(\frac{1}{2},...,\pm \frac{1}{2})$, whose rank is $2^{n-3}$. As $rank({\cal F})=4n-11$, this gives $5\leq n\leq 7$. Therefore, studying case by case, one recovers all the cases $(ob)$.

If $k=3$, the same reason as before gives $6\leq n\leq 9$. But in all these cases, $\frac{dex({\cal F})}{rank({\cal F})}$ is less than $\frac{1}{2}$, therefore no other case arises.
\end{proof}

\subsection{The case of $OGr(n-1,2n)$} 
\label{ssOGr(n-1,2n)}

Let $OGr(n-1,2n)$ be the orthogonal Grassmannian of isotropic $(n-1)$-planes in a $2n$-dimensional complex vector space. This variety is different in nature from those considered up to now, as it is not a generalized Grassmannian in the usual sense (it has Picard rank equal to $2$). 

It is well known that the zero locus of a general section of $S^2{\cal U}^*$ in $Gr(n,2n)$ consists of two components $OGr_+$ and $OGr_-$, each of which is a copy of $OGr(n,2n)$. For each point $W_+\in OGr_+$, and $W_-\in OGr_-$ such that $dim(W_+\cap W_-)=n-1$, let us denote $W:= W_+\cap W_-$. Then $W\in G:= OGr(n-1,2n)$. Moreover, given $W\in G$, one can recover $W_+$ and $W_-$ in a unique way, i.e. there exist two morphisms 
$$
\pi_{+/-}:G\to OGr_{+/-}\,\,\, , \,\,\, W\mapsto W_{+/-} \,\,\, .
$$
 
Over $OGr_+$ (respectively $OGr_-$) there is a line bundle ${\cal O}(\frac{1}{2})_+$ (resp. ${\cal O}(\frac{1}{2})_-$) which is the square root of the restriction to $OGr_+$ (resp. $OGr_-$) of ${\cal O}(1)$ over $Gr(n,2n)$. Therefore there are two line bundles ${\cal L}_+:=\pi_+^*{\cal O}(\frac{1}{2})_+$ and ${\cal L}_-:=\pi_-^*{\cal O}(\frac{1}{2})_-$ over $G$. In fact, it can be shown that the Picard group of $G$ is generated by those two line bundles. This description corresponds to the following picture from the point of view of quotients of $SO(2n, \mathbb{C})$. 

Recall that $P_i$ denotes the parabolic subgroup of $SO(2n, \mathbb{C})$ corresponding to the $i$-th simple root. The even orthogonal Grassmannian $OGr(n-1,2n)$ can be thought of as the quotient $G/P_{n,n-1}$, where $P_{n,n-1}$ is the parabolic subgroup $P_n \cap P_{n-1}$. The reason why $G$ is not considered to be a Grassmannian is exactly because the parabolic subgroup $P_{n,n-1}$ is associated to two, and not one, simple roots. This also explains why $Pic(G)=\mathbb{Z}^2$ (general theory of homogeneous spaces). The two morphisms $\pi_+$ and $\pi_-$ correspond to the two projections 
$$
G=G/P_{n,n-1} \to G/P_{n}\cong OGr_+$$
and 
$$
G=G/P_{n,n-1} \to G/P_{n-1}\cong OGr_-\,\,\, .
$$
Now we want to understand the relation between ${\cal L}_+$, ${\cal L}_-$, and ${\cal O}(1)$ (which is the restriction to $G$ of the Pl\"ucker line bundle ${\cal O}(1)$ over $Gr(n-1,2n)$). Let ${\cal U}$ be the tautological bundle of rank $n-1$ (and ${\cal U}^*$ its dual) on $G$. As already mentioned for the classical Grassmannians, there is a vector bundle ${\cal U}^\perp /{\cal U}$ over $G$ of rank $2$. Let us also denote ${\cal U}_{\pm}$ the tautological bundles over $OGr_{\pm}$ restricted to $G$. Then, ${\cal U}^{\perp}={\cal U}_++{\cal U}_-\subset \mathbb{C}^{2n}$, where the sum is not direct, and, as a consequence, 
$$
{\cal U}^\perp /{\cal U}={\cal U}_+/{\cal U}\oplus {\cal U}_-/{\cal U}\,\,\, .
$$
In this second equation the sum is actually a direct sum (this comes from the fact that $W= W_+\cap W_-$ for $W$ an isotropic $(n-1)$-plane in $\mathbb{C}^{2n}$). The quadratic form on $\mathbb{C}^{2n}$ restricts to a form on ${\cal U}^{\perp}$. Since this form descends to a form on ${\cal U}^{\perp}/{\cal U}$ which is non degenerate, then ${\cal U}^{\perp}/{\cal U}\cong ({\cal U}^{\perp}/{\cal U})^*$, which implies $det({\cal U}^{\perp}/{\cal U})=0$. Moreover $det({\cal U}_{\pm})=\pi_{\pm}^*{\cal O}(\frac{1}{2})_{\pm}^{\otimes 2}$. By taking the determinant of the bundles in the previous equation, we get the important relation
$$
{\cal L}_+\otimes{\cal L}_-={\cal O}(1) \,\,\,.
$$
The following theorem holds:

\begin{theorem}
\label{theorem4}
Let \emph{Y} be a fourfold with $K_Y={\cal O}_Y$ which is the variety of zeroes of a general section of a homogeneous, completely reducible, globally generated vector bundle ${\cal F}$ over the even orthogonal Grassmannian OGr(n-1, 2n) (and which does not appear in the  analogous classification for the classical Grassmannian). Up to identifications, the only possible cases are those appearing in Table \ref{tablefourn-1}.
\end{theorem}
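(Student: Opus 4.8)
The plan is to mirror the method of the preceding classifications, the essential new feature being that $\Pic(G)=\mathbb{Z}^2$, so that triviality of $K_Y$ imposes \emph{two} linear constraints rather than one. First I would record the combinatorial data of $G=OGr(n-1,2n)=G/P_{n,n-1}$. Removing the two forked nodes $n-1,n$ from the Dynkin diagram $D_n$ leaves the chain $A_{n-2}$, so the Levi factor is $sl(n-1)$ together with a two-dimensional torus; consequently an irreducible homogeneous bundle is an $sl(n-1)$-representation twisted by a character of the torus, and its rank is given by the single Weyl dimension formula $dim_{sl(n-1)}(\beta_1,\dots,\beta_{n-1})$, exactly as in Remark \ref{remrank}. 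Since $\pi_+\colon G\to OGr(n,2n)$ exhibits $G$ as a $\PP^{n-1}$-bundle (the fibre over $W_+$ being the hyperplanes of $W_+$, each contained in a unique member of $OGr_-$), one gets $\dim G=\tfrac{(n-1)(n+2)}{2}$ and hence the codimension-four relation $\sum_i rank(\cE_i)=\tfrac{(n-1)(n+2)}{2}-4$.

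Next I would set up the two degree functions. Writing $\cL_+=\pi_+^*\cO(\tfrac12)_+$ and $\cL_-=\pi_-^*\cO(\tfrac12)_-$, every determinant splits as $\det\cE_i=\cL_+^{dex_+(\cE_i)}\otimes\cL_-^{dex_-(\cE_i)}$, where $dex_\pm$ is read off from the coefficients of the fundamental weights $\lambda_n,\lambda_{n-1}$ by the averaging argument of Remark \ref{remdex} (here the relevant Weyl group is the part of the signed symmetric group of $D_n$ stabilising $P_{n,n-1}$). Computing $K_G^{-1}$ from the $\PP^{n-1}$-bundle structure, equivalently as the sum of the roots in the unipotent radical of $P_{n,n-1}$, and matching it with $\det\cF$ through adjunction $K_Y=(K_G\otimes\det\cF)|_Y$, yields the two equations $\sum_i dex_+(\cE_i)=c$ and $\sum_i dex_-(\cE_i)=c$, the common value $c$ and the $+\leftrightarrow-$ symmetry reflecting the swap $\pi_+\leftrightarrow\pi_-$.

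Then I would prove the analogue of Lemma \ref{lemmaatleast}: if $\cF$ is pulled back along $\pi_+$ (or $\pi_-$), then $Y$ is already a zero locus on $OGr(n,2n)$ and is covered by Table \ref{tablefoureven}; so to obtain genuinely new varieties at least one summand must involve $\cL_+$ and $\cL_-$ asymmetrically, i.e. lie outside both pullback subcategories. The relevant new building blocks are the line bundles $\cL_\pm$ and the two mutually dual factors $\cU_+/\cU\cong\cL_+^{-1}\cL_-$ and $\cU_-/\cU\cong\cL_+\cL_-^{-1}$ of the rank-two bundle $\cU^\perp/\cU$, whose determinant is trivial. With this in hand the enumeration becomes finite: bounding the ratios $dex_\pm/rank$ below, exactly as in Propositions \ref{evenk=n} and \ref{evenk=n-2}, forces $n$ to be small, after which the three relations (one rank, two dex) leave only finitely many weight systems to inspect by hand, the $+\leftrightarrow-$ symmetry halving the casework.

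The hard part will be the bookkeeping forced by $\Pic(G)=\mathbb{Z}^2$: one must track two independent degrees simultaneously and, crucially, correctly incorporate the summands $\cL_\pm$ and $\cU_\pm/\cU$ that have no counterpart over the single-root Grassmannians, checking that none of the resulting fourfolds coincides with a case already listed for $OGr(n,2n)$. Verifying that the surviving weight systems indeed satisfy all three relations and assembling them into Table \ref{tablefourn-1} is then routine but delicate, and the determination of which cases are CY and which are IHS proceeds as before via Proposition \ref{propCYvsIHS} and the Euler-characteristic computation of Appendix \ref{appEulchar}.
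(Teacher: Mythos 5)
Your combinatorial setup and finiteness argument track the paper's proof: the Levi factor $sl(n-1)$ plus a two-dimensional torus, the rank relation (your $\tfrac{(n-1)(n+2)}{2}-4$ equals the paper's $(n-1)(n+1)-\tfrac{(n-1)n}{2}-4$), the bound on $dex/rank$ forcing $n\leq 5$, and the concluding finite inspection are all exactly the paper's steps. Your two degree functions are also fine: the paper packages the Picard-rank-two constraint as the single equation $\sum_i dex({\cal E}_i)=n$ together with $\sum_i rank({\cal E}_i)\,\beta^i_n=0$ (which forces $\det{\cal F}$ to be a multiple of ${\cal O}(1)$, as $K_G$ is), and since $dex_{\pm}=dex\pm rank\cdot\beta_n$ up to normalization, your pair $\sum_i dex_+({\cal E}_i)=\sum_i dex_-({\cal E}_i)=c$ is linearly equivalent to that.

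The genuine gap is your exclusion lemma, the claimed analogue of Lemma \ref{lemmaatleast}. First, the claim itself is false: if ${\cal F}=\pi_+^*{\cal F}'$, then every section of ${\cal F}$ is pulled back, so its zero locus is $\pi_+^{-1}(Z(s'))$, a $\PP^{n-1}$-bundle over $Z(s')$ — not a zero locus in $OGr(n,2n)$; being uniruled it can never have trivial canonical bundle, so this case is vacuous rather than ``covered by Table \ref{tablefoureven}''. Second, and more seriously, this is not the exclusion the theorem needs: the hypothesis is that $Y$ does not appear in the classifications over the ordinary and classical Grassmannians, and the relevant fact (the one the paper uses) is that a summand with integer entries and $\beta_n=0$ is a Schur functor of ${\cal U}^*$, hence the restriction of a bundle from the ambient $Gr(n-1,2n)$; a case is old exactly when all summands are of this type, and new exactly when some summand has $\beta_n\neq 0$ or half-integer entries. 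Your criterion — at least one summand lying outside both pullback subcategories — is neither necessary nor sufficient for this. It is not necessary: case (oe10), ${\cal L}_-^{\oplus 4}\oplus{\cal L}_+^{\otimes 4}$ (likewise (oe2) and (oe16)), has every summand pulled back from one of the two sides, yet it is a genuinely new entry of Table \ref{tablefourn-1} that your test would discard. It is not sufficient: $\Lambda^2{\cal U}^*$ is a pullback from neither side (it is nontrivial on the fibers of both projections), so your test would declare $\Lambda^2{\cal U}^*\oplus{\cal O}(1)^{\oplus 2}$ over $OGr(3,8)$ to be new, although it is the restriction of case (c7.3) of Table \ref{tablefourord} and must be excluded. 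With the wrong newness criterion, the ``routine'' inspection in the range $n\leq 5$ produces the wrong table, so this step needs to be replaced by the weight-theoretic criterion above before the rest of your argument goes through.
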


For the proof of the theorem, every irreducible homogeneous bundle is represented by its highest weight. The notations for the bundles on $OGr(n-1,2n)$ are the same as those used for $OGr(k,2n)$ for general $k$, as well as the formula for $rank$ and $dex$ of the bundles (notice that the parabolic algebra $Lie(P_{n-1,n})$ has Levi factor $sl(n-1)$). The line bundles ${\cal L}_+$ and ${\cal L}_-$ will be respectively denoted by $(\frac{1}{2},...,\frac{1}{2})$ and $(\frac{1}{2},...\frac{1}{2},-\frac{1}{2})$. Finally, the canonical bundle of $OGr(n-1,2n)$ is a tensor power of ${\cal O}(1)$.

Fourfolds with trivial canonical bundle correspond to homogeneous vector bundles ${\cal F}=\sum_i {\cal E}_i$, with
\begin{equation}
\label{n-1rank}
\sum_i rank({\cal E}_i)=(n-1)(n+1)-\frac{(n-1)(n)}{2}-4\,\,\, ,
\end{equation}
\begin{equation}
\label{n-1dex}
\sum_i dex({\cal E}_i)=n\,\,\, .
\end{equation}

In order to find new varieties with respect to the case of the ordinary Grassmannian, it is necessary that for at least one bundle ${\cal E}_i=\beta$, $|\beta|_n \neq 0$ or the $\beta_i$'s are not integers (they can be half integers).

\begin{proof}[Proof of theorem \ref{theorem4}]
Suppose the variety is embedded in $OGr(n-1,2n)$, with $n\geq 4$. Then, for any irreducible homogeneous globally generated bundle ${\cal E}_i$ which is a component of ${\cal F}$, we have $\frac{dex({\cal E}_i)}{rank({\cal E}_i)}\geq \frac{2}{n-1}$. By using equations (\ref{n-1dex}) and (\ref{n-1rank}), we get the inequality $n\leq 5$. Then, the theorem follows by inspecting all the possible cases. It should be remarked that, if ${\cal F}=\oplus_i{\cal E}_i=\oplus_i (\beta^i_1,...,\beta^i_n)$ is a suitable vector bundle, then $\sum_i rank({\cal E}_i)*\beta^i_n=0$, which assures that $det({\cal F})$ is a multiple of ${\cal O}(1)$, as $K_{OGr(n-1,2n)}$ is.
\end{proof}

\section{The cases of dimensions two and three}

In this final section, we will give the analogous results of the previous classifications of varieties with trivial canonical bundle in dimensions two and three. The proofs don't present anything new from the previous ones, they follow the exact same strategy, so we omit them. It is perhaps worth remarking that the first problem is to prove the finiteness of the number of such varieties in Grassmannians (for the ordinary Grassmannian, see \cite{Japonais}). The notations are the same used in the previous classifications.

\bigskip
We will begin with the classification in dimension 3.

\begin{theorem}
\label{thm3ord}
Let \emph{Y} be a threefold with $K_Y={\cal O}_Y$ which is the variety of zeroes of a general section of a homogeneous, completely reducible, globally generated vector bundle ${\cal F}$ over Gr(k, n). Up to the identification of Gr(k,n) with Gr(n-k,n), the only possible cases are those appearing in Table \ref{tablethreeord}.
\end{theorem}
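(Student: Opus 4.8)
The plan is to follow exactly the same strategy as in the fourfold case (Theorem \ref{thm4ordin}), merely adapting the two fundamental numerical constraints to the case $\dim(Y)=3$. The key observation is that the trivial-canonical-bundle condition $K_Y = \cO_Y$ depends only on the total amount of ``twisting'' contributed by $\cF$, and so equation (\ref{eqdex}), namely $\sum_i dex(\cE_i)=n$, remains unchanged; the derivation via the determinant computation $det(V_\beta)=L^{dex(V_\beta)}$ and the Weyl-group invariance of the weight system is identical. What changes is the rank equation: since $Y$ now has codimension $\dim Gr(k,n)-3 = k(n-k)-3$ rather than $k(n-k)-4$, the analogue of (\ref{eqrank}) becomes
\[
\sum_i rank(\cE_i)=k(n-k)-3\,\,\,.
\]
First I would re-establish Lemma \ref{lemma3.2} and the Corollary verbatim (these are purely about which bundles can occur as summands and are insensitive to the precise codimension), and then re-run Proposition \ref{classkpetit} for $k\leq 3$ with $k(n-k)-3$ in place of $k(n-k)-4$.

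Next I would reprove the ``Reduction of cases'' Lemma \ref{lemmareduct} with the constant $k^2+4$ replaced by $k^2+3$, since the relevant identity in Remark \ref{remlemmred} becomes $\sum_i(k\,dex(\cE_i)-rank(\cE_i))=kn-kn+k^2+3=k^2+3$. The argument is structurally identical: assuming a summand is not of type (D), one bounds $rank(\beta)(|\beta|_1-1)$ from below and contradicts the reduction inequality $rank(\beta)(|\beta|_1-1)\leq k^2+3$ in each case, isolating the short list (A), (B), (C), (Z), together with the exceptional occurrences of $(1,1,1,0,\dots,0)$ for small $k$. Then I would redo Proposition \ref{classk=4} for $k,n-k\geq 4$ using the quantity $\xi(\beta)=rank(\beta)(k\,dex(\beta)/rank(\beta)-1)$ and the new total $k^2+3=\sum_i\xi(\cE_i)$, and finally the two Diophantine endgame computations in the proof of the classification, where the constant shifts produce new equations of the form $(b+c)(b-c)=\cdots$ whose integer solutions must be enumerated.

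The main obstacle I expect is not conceptual but combinatorial bookkeeping: the slightly smaller target rank $k(n-k)-3$ opens up or closes different small cases than the fourfold analysis, so the lists of surviving bundles — and in particular the number of admissible line-bundle summands of type (D) — must be recomputed from scratch rather than simply transcribed. Because the codimension is odd, one also loses the convenient symmetry exploited at $2k=n$ in the fourfold proof (where $\xi$ was symmetric under $Gr(k,n)\leftrightarrow Gr(n-k,n)$ on the diagonal); one still has the identification $Gr(k,n)\cong Gr(n-k,n)$ to assume $2k\leq n$, but the self-dual case must be handled by direct inspection. Since the paper explicitly states that the proofs in dimensions two and three ``follow the exact same strategy, so we omit them,'' the honest content of this proof is precisely the verification that each of these numerical steps goes through with the substitutions $k(n-k)-4\mapsto k(n-k)-3$, $n\mapsto n$ (the $dex$ sum is unchanged), and $k^2+4\mapsto k^2+3$, after which the resulting classification is exactly Table \ref{tablethreeord}.
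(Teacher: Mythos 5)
Your proposal is correct and coincides with the paper's own approach: the paper explicitly omits the proof of Theorem \ref{thm3ord}, saying only that it ``follows the exact same strategy'' as the fourfold classification, and your numerical substitutions (rank sum $k(n-k)-3$, unchanged dex sum $\sum_i dex({\cal E}_i)=n$ via adjunction, reduction constant $k^2+3$) are precisely the right adaptations. One quibble: your claim that the $2k=n$ symmetry is lost ``because the codimension is odd'' is unfounded, since the symmetry of $\xi$ there comes solely from $k=n-k$ and is independent of the parity of the codimension, so that step carries over verbatim and still produces the symmetrized cases $(c3)$, $(d2.1)$, $(d3.1)$ of Table \ref{tablethreeord} --- your direct-inspection fallback is unnecessary, though harmless.
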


This classification, as already mentioned in the introduction, appears also in \cite{Japonais}. However, we point out the fact that in \cite{Japonais}, the cases $(c3)$, $(d3.1)$ and $(d2.1)$ do not appear. The bundles which define them are analogous to the one appearing respectively in the cases $(c2)$, $(d3)$ and $(d2)$, and in fact there are isomorphisms $(c3)\cong (c2)$, $(d3.1)\cong (d3)$ and $(d2.1)\cong (d2)$. These isomorphisms come from the fact that all these cases live in the symplectic Grassmannian $IGr(n,2n)$, on which there is a canonical isomorphism of bundles ${\cal U}^* \cong {\cal Q}$.

\begin{theorem}
\label{thm3class}
Let \emph{Y} be a threefold with $K_Y={\cal O}_Y$ which is the variety of zeroes of a general section of a homogeneous, completely reducible, globally generated vector bundle ${\cal F}$ over the symplectic Grassmannian IGr(k, 2n) (respectively the odd orthogonal Grassmannian OGr(k,2n+1), the even orthogonal Grassmannian OGr(k,2n)) and which does not appear in the  analogous classification for the ordinary Grassmannian. Up to identifications, the only possible cases are those appearing in Table \ref{tablethreesym} (respectively Table \ref{tablethreeodd}, Table \ref{tablethreeeven}).
\end{theorem}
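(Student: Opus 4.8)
The plan is to repeat \emph{mutatis mutandis} the argument of Theorem \ref{thm4class}: since the entire combinatorial apparatus depends only on the Levi factor of the parabolic $P_k$ and on the numerical invariants $rank$ and $dex$, passing from fourfolds to threefolds changes nothing structurally and only modifies the right-hand side of the rank equation.

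First, for each of the three families I would record the two numerical constraints. The $dex$ equation is \emph{unchanged}, because it encodes $K_Y=\mathcal{O}_Y$ through $\det\mathcal{F}=K_G^{-1}$, a condition on the determinant that is insensitive to $\dim Y$; thus one still has $\sum_i dex(\mathcal{E}_i)=2n-k+1$, $2n-k$, and $2n-k-1$ over $IGr(k,2n)$, $OGr(k,2n+1)$, and $OGr(k,2n)$ respectively. The rank equation becomes $\sum_i rank(\mathcal{E}_i)=k(2n-k)-\tfrac{k(k-1)}{2}-3$, $k(2n+1-k)-\tfrac{k(k+1)}{2}-3$, and $k(2n-k)-\tfrac{k(k+1)}{2}-3$, the sole difference from (\ref{symprank}), (\ref{oddrank}), (\ref{evenrank}) being the subtraction of $3$ rather than $4$ (the codimension of a threefold). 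The reduction steps --- the analogue of Lemma \ref{lemmaatleast}, and the remarks allowing one to discard $\mathcal{U}^*$, $(\mathcal{U}^*)^{\oplus 2}$ and the like --- transfer word for word, so one may assume at least one summand $\beta$ has $|\beta|_2\neq 0$ or half-integer entries, guaranteeing the variety is genuinely new with respect to Theorem \ref{thm3ord}.

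Next I would run the same case division dictated by the Dynkin diagram with the $k$-th node removed: $k\leq 3$ versus $k\geq 4$ in the symplectic case; $k=n$, $k=n-1$, $k\leq n-2$ for $OGr(k,2n+1)$; and $k=n$, $k=n-2$, $k\leq n-3$ for $OGr(k,2n)$. In every sub-case the inequality $\frac{dex(\mathcal{E})}{rank(\mathcal{E})}=\frac{|\beta|_1}{k}\geq\frac{1}{k}$ of (\ref{sympineqdexrank}), combined with the fact that non-line bundles and non-trivial spin bundles have strictly larger ratio, bounds $n$ linearly in $k$ and leaves only finitely many pairs $(k,n)$. These I would then inspect by hand, listing the admissible weight decompositions and matching them against Tables \ref{tablethreesym}, \ref{tablethreeodd}, and \ref{tablethreeeven}.

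The hard part is purely bookkeeping rather than conceptual. Because the codimension drops by one, the tight integer bounds of the fourfold proof loosen by a unit (for instance the quantity $k^2+4$ governing Lemma \ref{lemmareduct} becomes $k^2+3$), so a number of borderline decompositions previously ruled out now satisfy the constraints and must be re-examined --- this is exactly why the threefold tables are longer than their fourfold counterparts. The delicate points are therefore the low-$k$ symplectic computations ($k=2,3$) and the spin-bundle analysis at $k=n$, $n-1$, $n-2$, where one must re-derive every surviving decomposition with the new rank equation and check that it indeed produces a variety not already recorded in the ordinary-Grassmannian list.
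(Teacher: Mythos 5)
Your proposal follows essentially the same route as the paper: the paper itself omits the proof of Theorem \ref{thm3class}, stating only that it "follows the exact same strategy" as the fourfold classification, and your outline — unchanged $dex$ equations, rank equations with $-3$ in place of $-4$, the same reduction lemmas (analogue of Lemma \ref{lemmaatleast}, exclusion of $\mathcal{U}^*$-type summands), the same Dynkin-diagram case division, and finite case-by-case inspection — is precisely that strategy. The only imprecision is the side remark that the threefold tables are uniformly longer (the symplectic one is in fact shorter), but this plays no role in the argument.
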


\begin{theorem}
\label{theorem3}
Let \emph{Y} be a threefold with $K_Y={\cal O}_Y$ which is the variety of zeroes of a general section of a homogeneous, completely reducible, globally generated vector bundle ${\cal F}$ over the even orthogonal Grassmannian OGr(n-1, 2n) (and which does not appear in the  analogous classification for the classical Grassmannian). Up to identifications, the only possible cases are those appearing in Table \ref{tablethreen-1}.
\end{theorem}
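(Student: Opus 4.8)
The plan is to reproduce the mechanism of the proof of Theorem~\ref{theorem4}, changing only the codimension count. Because the anticanonical bundle of $OGr(n-1,2n)$ is intrinsic to the ambient space and does not see the dimension of $Y$, the $dex$-equation is untouched: one still imposes $\sum_i dex(\cE_i)=n$ as in (\ref{n-1dex}). Requiring $Y$ to have codimension three instead of four replaces (\ref{n-1rank}) by
\[
\sum_i rank(\cE_i)=(n-1)(n+1)-\frac{(n-1)n}{2}-3=\frac{(n+2)(n-1)}{2}-3 .
\]
As in the other classical cases I would first make two reductions. One may assume that $\cU^*$ is not a summand of $\cF$: the zero locus of a general section of $\cU^*$ on $OGr(n-1,2n)$ is the locus of isotropic $(n-1)$-planes inside a generic hyperplane, i.e. a maximal orthogonal Grassmannian $OGr(n-1,2n-1)\cong OGr(n,2n)$, already classified. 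One may also assume that some summand $\cE_i=\beta^i$ has $\beta^i_n\neq 0$ or half-integral entries, for otherwise $Y$ is cut out by a bundle pulled back from $Gr(n-1,2n)$ and so appears already in Table~\ref{tablethreeord}.

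First I would bound $n$. For every globally generated irreducible summand one has $\frac{dex(\beta)}{rank(\beta)}=\frac{|\beta|_1}{n-1}$, where $|\beta|_1=\sum_{i=1}^{n-1}\beta_i$ since the Levi factor is $sl(n-1)$. Once $\cU^*=(1,0,\dots,0)$ is discarded, the globally generated bundle minimising $|\beta|_1$ is $\Lambda^2\cU^*=(1,1,0,\dots,0)$, with ratio $\frac{2}{n-1}$; the line bundles $\cL_{\pm}$ and the spin bundles $\cT_{\pm\frac12}$ give ratio $\frac12$, which is $\geq\frac{2}{n-1}$ precisely when $n\geq 5$. Hence for $n\geq 5$ every summand satisfies $\frac{dex(\cE_i)}{rank(\cE_i)}\geq\frac{2}{n-1}$, and summing against the two displayed equations gives
\[
n=\sum_i dex(\cE_i)\geq\frac{2}{n-1}\sum_i rank(\cE_i)=(n+2)-\frac{6}{n-1},
\]
so $\frac{6}{n-1}\geq 2$, i.e. $n\leq 4$. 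This is incompatible with $n\geq 5$, so no solutions survive there and only the values $n\leq 4$ remain. Since $OGr(1,4)$ is a surface, the relevant ambient spaces are $OGr(2,6)$ and $OGr(3,8)$.

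The remaining, genuinely computational step is the explicit inspection of $n=3$ and $n=4$, where the estimate above is too weak (there $\frac{2}{n-1}>\frac12$, so the line and spin bundles must be handled individually). For each I would list all completely reducible $\cF=\oplus_i\cE_i$ with $\sum_i rank(\cE_i)$ and $\sum_i dex(\cE_i)$ equal to the prescribed values, keep only those with a summand carrying $\beta^i_n\neq0$ or half-integral weights, and discard any that still descend from $Gr(n-1,2n)$; for $n=3$ one must also keep in mind the exceptional isomorphism $D_3\cong A_3$, which can reinterpret $OGr(2,6)$ as a flag variety of $SL(4)$. Finally, exactly as in Theorem~\ref{theorem4}, one checks that $K_Y=\cO_Y$ really holds on this Picard rank two variety: writing $\det\cE_i$ in terms of $\cL_+=(\tfrac12,\dots,\tfrac12)$ and $\cL_-=(\tfrac12,\dots,\tfrac12,-\tfrac12)$, the $\cL_+$- and $\cL_-$-degrees of $\det\cF$ agree if and only if $\sum_i rank(\cE_i)\,\beta^i_n=0$, which is precisely the condition that $\det\cF$ be a power of $\cO(1)=\cL_+\otimes\cL_-$ and hence able to match $K^{-1}_{OGr(n-1,2n)}$; smoothness of the resulting $Y$ is automatic by global generation and Bertini, as in Remark~\ref{remsmoothness}.

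I expect the reduction to $n\leq 4$ to be immediate, and the real obstacle to lie in the finite analysis at $n=3,4$. The delicate point is the Picard rank two bookkeeping: it is not enough to track the total $\cO(1)$-degree through $dex$, one must follow the $\cL_+$- and $\cL_-$-contributions separately, both to impose triviality of the canonical bundle via the balance condition $\sum_i rank(\cE_i)\beta^i_n=0$ and to separate the genuinely new threefolds from those that merely reproduce, through the projections $\pi_{\pm}$ or the inclusion into $Gr(n-1,2n)$, cases already present in the earlier tables. The enumeration itself is routine; all the care is in this accounting.
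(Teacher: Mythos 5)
Your proposal is correct and follows essentially the same route as the paper: the paper omits the proof of Theorem \ref{theorem3}, stating that it repeats the strategy of Theorem \ref{theorem4}, and your argument is exactly that strategy adapted to codimension three — the unchanged equation (\ref{n-1dex}), the rank equation with $-3$ in place of $-4$, the ratio bound $\frac{dex}{rank}\geq\frac{2}{n-1}$ forcing $n\leq 4$, the finite inspection of $OGr(2,6)$ and $OGr(3,8)$, and the balance condition $\sum_i rank({\cal E}_i)\beta^i_n=0$ ensuring $\det{\cal F}$ is a power of ${\cal O}(1)$. If anything, your handling of the small cases is slightly more careful than the paper's own wording, since you only invoke the ratio bound for $n\geq 5$, where it genuinely holds for the line bundles ${\cal L}_{\pm}$.
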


\bigskip

Now, the classification for dimension two (K3 surfaces). In this case we reported the degree of the surface with respect to the bundle ${\cal O}(\frac{1}{2})$ for the varieties in OGr(n, 2n+1) and OGr(n, 2n), and with respect to the bundle ${\cal O}(1)$ in the other cases. For $OGr(n-1,2n)$, we reported the degree with respect to ${\cal O}(1)$, ${\cal L}_+$ and ${\cal L}_-$. In this way, one also gets the genus of the K3 (by the well known formula $degree=2\, genus-2$).

\begin{theorem}
\label{thm2ord}
Let \emph{Y} be a surface with $K_Y={\cal O}_Y$ which is the variety of zeroes of a general section of a homogeneous, completely reducible, globally generated vector bundle ${\cal F}$ over Gr(k, n). Up to the identification of Gr(k,n) with Gr(n-k,n), the only possible cases are those appearing in Table \ref{tabletwoord}.
\end{theorem}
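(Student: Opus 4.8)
The plan is to follow the same strategy as in the proof of Theorem \ref{thm4ordin}, modifying only the numerical constraints to reflect $\dim Y = 2$ instead of $4$. Writing $\mathcal{F} = \oplus_i \mathcal{E}_i$ with each $\mathcal{E}_i$ an irreducible, globally generated homogeneous bundle of dominant weight $\beta^i$, I would first record the two governing identities. The dimension condition $\codim_{Gr(k,n)}(Y) = 2$ replaces equation (\ref{eqrank}) by
\[
\sum_i rank(\mathcal{E}_i) = k(n-k) - 2,
\]
while the triviality of $K_Y$ is insensitive to $\dim Y$ and still yields equation (\ref{eqdex}),
\[
\sum_i dex(\mathcal{E}_i) = n,
\]
since the adjunction computation $K_{Gr(k,n)} = \mathcal{O}(-n)$ and $\det\mathcal{F} = \mathcal{O}(\sum_i dex(\mathcal{E}_i))$ does not depend on the dimension of $Y$.

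Next I would rerun the preliminary reductions. Lemma \ref{lemma3.2} applies verbatim, so the tautological summands $\mathcal{U}^*$, $\mathcal{Q}$, and (according to the sign of $2k-n$) the bundles $(1,1,0,\ldots,0)$ and $(2,0,\ldots,0)$ may be discarded, and the isomorphism $Gr(k,n)\cong Gr(n-k,n)$ lets me assume $2k\geq n$. For $k\leq 3$ I would carry out the direct rank/dex enumeration as in Proposition \ref{classkpetit}: lower bounds on $rank(\beta)$ leave only finitely many admissible weights, and the identity $\sum dex = n$ then pins down $n$. For $k,n-k\geq 4$ I would prove the surface analog of the reduction Lemma \ref{lemmareduct}; here the bookkeeping of Remark \ref{remlemmred} gives $\sum_i\bigl(k\,dex(\mathcal{E}_i)-rank(\mathcal{E}_i)\bigr) = k^2+2$ (the constant $4$ becoming $2$), so that for each summand with $|\beta|_2=0$ one has $rank(\beta)(|\beta|_1-1)\leq k^2+2$. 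The same elementary estimates as in Lemma \ref{lemmareduct} then restrict $\beta$ to the short list $(A)=(1,\ldots,1,0;0,\ldots,0)$, $(B)=(1,1,0,\ldots,0)$, $(C)=(2,0,\ldots,0)$, $(Z)=(2,1,\ldots,1;0,\ldots,0)$, together with the column type $(D)=(p,\ldots,p;\beta_{k+1},\ldots,\beta_n)$.

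With this finite palette of admissible summands I would assemble $\mathcal{F}$ exactly as in Proposition \ref{classk=4} and the main classification proof, testing every combination against the pair $\sum rank = k(n-k)-2$ and $\sum dex = n$; the combinatorics is heavier than in the fourfold case because the weaker codimension constraint leaves more room for summands, which is why Table \ref{tabletwoord} is longer, but it is entirely mechanical. Finally, all the bundles involved are globally generated, so Bertini (Remark \ref{remsmoothness}) gives smoothness of $Y$, producing a smooth surface with trivial canonical bundle; a subsequent computation of the degree and of $\chi(\mathcal{O}_Y)$ (recorded in the table, playing the role that Proposition \ref{propCYvsIHS} plays for fourfolds) then identifies the K3 surfaces among them.

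I expect the main obstacle to be the same as in the fourfold argument, namely establishing \emph{finiteness} for large $k$, i.e.\ excluding infinite families of pairs $(k,n)$. This is where the two identities must be reduced to a Diophantine condition, mirroring the factorizations such as $(b+c)(b-c)=40$ at the end of the proof of Theorem \ref{thm4ordin}, and one checks that only finitely many integer solutions survive, none of which yield cases beyond those tabulated.
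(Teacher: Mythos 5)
Your proposal is correct and takes essentially the same approach as the paper: the paper in fact omits the proof of Theorem \ref{thm2ord} entirely, stating only that it follows the exact same strategy as the fourfold classification of Theorem \ref{thm4ordin}, and your adaptation (rank sum $k(n-k)-2$, unchanged dex sum $\sum_i dex(\mathcal{E}_i)=n$, reduction constant $k^2+2$ in place of $k^2+4$, Lemma \ref{lemma3.2} verbatim, and the final Diophantine finiteness check) is precisely that strategy correctly instantiated.
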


\begin{theorem}
\label{thm2class}
Let \emph{Y} be a surface with $K_Y={\cal O}_Y$ which is the variety of zeroes of a general section of a homogeneous, completely reducible, globally generated vector bundle ${\cal F}$ over the symplectic Grassmannian IGr(k, 2n) (respectively the odd orthogonal Grassmannian OGr(k,2n+1), the even orthogonal Grassmannian OGr(k,2n)) and which does not appear in the  analogous classification for the ordinary Grassmannian. Up to identifications, the only possible cases are those appearing in Table \ref{tabletwosym} (respectively Table \ref{tabletwoodd}, Table \ref{tabletwoeven}).
\end{theorem}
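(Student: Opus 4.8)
The plan is to run the same machinery that produced Theorem~\ref{thm4class}, the only structural change being that the codimension of $Y$ inside the Grassmannian drops from four to two. The first step is to record the two numerical constraints on $\cF=\oplus_i\cE_i$. The \emph{dex} equation is unchanged from the fourfold case, namely (\ref{sympdex}), (\ref{odddex}) and (\ref{evendex}), since it encodes only the triviality of $K_Y$ and is insensitive to $\dim Y$. The \emph{rank} equation becomes $\sum_i rank(\cE_i)=\dim(G/P)-2$; that is, the right-hand sides of (\ref{symprank}), (\ref{oddrank}) and (\ref{evenrank}) with $-2$ in place of $-4$. I would then restate the analogue of Lemma~\ref{lemmaatleast}: at least one summand must satisfy $|\beta|_2\neq 0$, or carry half-integer weights in the orthogonal cases, for otherwise $Y$ is already a zero locus over the ordinary Grassmannian and is accounted for by Theorem~\ref{thm2ord}.

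The second step is the finiteness and reduction argument, identical in spirit to the fourfold propositions. The inequality $\frac{dex(\cE)}{rank(\cE)}=\frac{|\beta|_1}{k}\geq\frac{1}{k}$ of (\ref{sympineqdexrank}), together with its orthogonal variants, combined with the rank and dex equations, bounds $n$ in terms of $k$ and then forces $k$ to be small. As before one separates cases according to which simple root is removed, since this determines the Levi factor and hence the precise \emph{rank} and \emph{dex} formulas: for $IGr(k,2n)$ one treats $k\leq 3$ and then $k\geq 4$; for $OGr(k,2n+1)$ the cases $k=n$, $k=n-1$, $k\leq n-2$; and for $OGr(k,2n)$ the cases $k=n$, $k=n-2$, $k\leq n-3$. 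Within each surviving pair $(k,n)$ the two equations leave only finitely many admissible decompositions, enumerated by hand exactly as in the chain of Propositions~\ref{classkpetitsym}--\ref{evenk=n-3etmoins}, and one discards every decomposition reducible to the ordinary Grassmannian.

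The third step is to assemble Tables~\ref{tabletwosym}, \ref{tabletwoodd} and~\ref{tabletwoeven} and to read off the geometry. The surface analogue of Proposition~\ref{propCYvsIHS} is the elementary dichotomy that a smooth connected surface with $K_Y=\cO_Y$ is a K3 surface precisely when $\chi(\cO_Y)=2$ and an abelian surface precisely when $\chi(\cO_Y)=0$; both connectedness and this distinction are read off from $\chi(\cO_Y)=\int_Y td(Y)$, computed via Hirzebruch--Riemann--Roch exactly as in Section~\ref{secCYvsIHS} (a disconnected locus, as in the fourfold case of two isotropic components, shows up as a larger value of $\chi$). For the reported invariants one further computes the degree of $Y$ with respect to $\cO(1)$, or with respect to $\cO(\frac{1}{2})$ for the spinorial Grassmannians $OGr(n,2n+1)$ and $OGr(n,2n)$, and recovers the genus from $\deg=2g-2$.

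I expect the main obstacle to be the size of the case analysis rather than any single delicate estimate. Lowering the codimension from four to two enlarges the rank budget in (\ref{symprank}), (\ref{oddrank}) and (\ref{evenrank}), so the numerical sieve admits many more decompositions, precisely the proliferation of cases noted in the introduction. The care required is therefore organisational: ensuring that within each $(k,n)$ no admissible bundle is overlooked, and that none is counted twice under the identification $Gr(k,n)\cong Gr(n-k,n)$ and the isomorphisms among the small orthogonal Grassmannians.
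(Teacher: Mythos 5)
Your proposal is correct and takes essentially the same approach as the paper: the paper in fact omits the proof of Theorem \ref{thm2class}, stating only that it follows the exact same strategy as the fourfold classification (Theorem \ref{thm4class}), and that strategy—unchanged \emph{dex} equations, rank equations with $-2$ in place of $-4$, the analogue of Lemma \ref{lemmaatleast}, the same case split according to the Levi factor, and the K3/abelian/disconnected dichotomy read off from $\chi({\cal O}_Y)$—is precisely what you spell out. The only inessential slip is your heuristic remark that the larger rank budget produces more cases: since the \emph{dex} budget is unchanged, the larger rank requirement actually makes the ratio constraints harder to satisfy and cuts down the admissible pairs $(k,n)$, which is why the surface tables are shorter than the fourfold ones.
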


\begin{theorem}
\label{theorem2}
Let \emph{Y} be a surface with $K_Y={\cal O}_Y$ which is the variety of zeroes of a general section of a homogeneous, completely reducible, globally generated vector bundle ${\cal F}$ over the even orthogonal Grassmannian OGr(n-1, 2n) (and which does not appear in the  analogous classification for the classical Grassmannian). Up to identifications, the only possible cases are those appearing in Table \ref{tabletwon-1}.
\end{theorem}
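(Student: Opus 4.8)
The plan is to run the argument of Theorem \ref{theorem4} with the codimension lowered by two. Since $\dim OGr(n-1,2n) = (n-1)(n+1) - \tfrac{(n-1)n}{2}$ and now $\dim Y = 2$, a suitable $\cF = \oplus_i \cE_i$ must satisfy the surface analogues of (\ref{n-1rank}) and (\ref{n-1dex}), namely
\[
\sum_i \rank(\cE_i) = (n-1)(n+1) - \frac{(n-1)n}{2} - 2, \qquad \sum_i dex(\cE_i) = n .
\]
The dex equation is unchanged because it encodes only the condition $K_Y = \cO_Y$ and is insensitive to $\dim Y$; as in the fourfold case one also keeps the balancing relation $\sum_i \rank(\cE_i)\,\beta^i_n = 0$, which guarantees that $\det \cF$ is an integral multiple of $\cO(1)$ and hence matches $K_{OGr(n-1,2n)}$. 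By the usual reduction I may assume that at least one summand has $\beta_n \neq 0$ or half-integer entries (otherwise $Y$ already occurs over $Gr(n-1,2n)$ and is covered by Theorem \ref{thm2ord}), and that $\cU^*$ is not a summand, its zero locus merely realizing the restriction to $OGr(n-1,2n-1)$.

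Next I would bound $n$ by the ratio estimate. Because the semisimple part of the Levi factor of $P_{n-1,n}$ is $sl(n-1)$, one has $\frac{dex(\cE_i)}{\rank(\cE_i)} = \frac{|\beta^i|_1}{n-1}$, and for every admissible summand other than $\cU^*$ this is at least $\frac{2}{n-1}$, the value realized by $\Lambda^2\cU^* = (1,1,0,\dots,0)$; the line bundles $\cL_\pm = (\tfrac12,\dots,\pm\tfrac12)$ give the ratio $\tfrac12$, which is $\geq \frac{2}{n-1}$ precisely when $n \geq 5$. Thus for $n \geq 5$ every summand satisfies $dex(\cE_i) \geq \frac{2}{n-1}\rank(\cE_i)$, and summing against the two displayed equations yields
\[
n = \sum_i dex(\cE_i) \geq \frac{2}{n-1}\sum_i \rank(\cE_i) = \frac{n^2+n-6}{n-1},
\]
which rearranges to $n \leq 3$, contradicting $n \geq 5$. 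Hence no new surfaces arise for $n \geq 5$, and the classification is confined to the small values $n = 3, 4$ (the value $n = 2$ is vacuous, since there $\dim OGr(1,4) = 2$ forces $\cF = 0$, incompatible with $K \neq \cO$).

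It then remains to enumerate. For $n = 3$ and $n = 4$ I would list all direct sums of globally generated irreducible homogeneous bundles meeting the rank and dex equations, the balancing relation, and the newness condition; Bertini (Remark \ref{remsmoothness}) makes each resulting $Y$ a smooth surface with $K_Y = \cO_Y$. To record the entries of Table \ref{tabletwon-1} I would compute the degrees of $Y$ against $\cL_+$, $\cL_-$ and $\cO(1) = \cL_+ \otimes \cL_-$ with SCHUBERT2 and read off the genus of each polarization from $\deg = 2g - 2$, after confirming via the Koszul complex of $\cF$ together with Bott vanishing on $OGr(n-1,2n)$ that $q(Y) = H^1(Y,\cO_Y) = 0$, so that $Y$ is indeed a $K3$ rather than an abelian or bielliptic surface.

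The main obstacle is the explicit case analysis at the two boundary values. At $n = 4$ the clean inequality $\frac{2}{n-1} = \tfrac23 > \tfrac12$ fails for the line bundles $\cL_\pm$, so these configurations cannot be excluded by the uniform ratio bound and must be screened directly against the rank and dex equations; at $n = 3$ the exceptional isogeny $D_3 \cong A_3$ collapses several a priori distinct weights to the same bundle, so both completeness and non-redundancy of the list have to be checked by hand rather than inferred from the general inequality. The only genuinely geometric (as opposed to weight-combinatorial) input is the vanishing $q(Y) = 0$ needed to certify the $K3$ property, which is where the Koszul--Bott computation enters.
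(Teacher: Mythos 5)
Your proposal is correct and is essentially the paper's own argument: the paper omits the proof of Theorem \ref{theorem2}, stating that it follows the exact same strategy as the fourfold case (Theorem \ref{theorem4}), which is precisely what you reproduce — the adjunction/dex equation unchanged, the rank equation lowered to $\dim - 2$, the balancing relation $\sum_i \rank({\cal E}_i)\beta^i_n = 0$, and the ratio bound $\frac{dex}{\rank} \geq \frac{2}{n-1}$ (valid for all admissible summands once $n \geq 5$, since ${\cal L}_\pm$ have ratio $\tfrac12$) forcing a contradiction for $n \geq 5$. Your resulting restriction to $n = 3,4$ matches Table \ref{tabletwon-1} ($OGr(2,6)$ and $OGr(3,8)$), and the remaining finite enumeration plus the $\chi({\cal O}_Y)$/degree computations are exactly what the paper does in the other low-dimensional classifications.
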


In the classifications concerning $K3$ surfaces, we have marked the varieties which have already been studied by Mukai with $M.$ In particular: cases $M.(b10)$, $M.(oy5)$, $M.(b13)$, $M.(c6)$ are in \cite{Mukai10}; case $M.(c9)$ is in \cite{Mukai13}; cases $M.(ox5)$, $M.(d3)$ are in \cite{Mukai1820}.

There are many other cases which have not been examined yet, and they are worth being considered in more detail, as we intend to do in the next future.
\smallskip

As a final remark, let us note that the classifications given in this paper give for free the analogous classifications for Fano threefolds, fourfolds and fivefolds. It suffices to individuate the varieties for which the bundle ${\cal F}$ contains a line bundle; this means that the variety with trivial canonical bundle lives in a Fano manifold of dimension greater by one as a hyperplane section of a certain very ample line bundle. What the previous classifications do not give automatically in general is the index of those Fano manifolds.

\appendix

\section{Euler characteristic}
\label{appEulchar}

The computation of the Euler characteristic of the trivial bundle ${\cal O}_Y$ can be done in two different ways. The first one is applicable in general only for the symplectic and odd orthogonal Grassmannians, while the second one can be applied to all the Grassmannians (and actually is lighter in terms of computational time).\newline

For symplectic and odd orthogonal Grassmannians one knows that one can choose multiplicative generators of the cohomology to be the Chern classes of the quotient bundle ${\cal Q}$. This is the tautological quotient bundle of the classical Grassmannian in which the symplectic and odd orthogonal Grassmannians embed naturally (as a reference, one can see \cite[Theorem 8, Theorem 12]{Tamvakis}). For instance, $IGr(k, 2n)$ embeds in $Gr(k, 2n)$ as the zero locus of a section of $\Lambda^2{\cal U}^*$. Then, suppose to be able to express the Chern classes of the bundle ${\cal F}$ which defines $Y$ in $IGr(k, 2n)$ in terms of the Chern classes of ${\cal Q}$, which live in the cohomology of $Gr(k, 2n)$. Then, the computation can be made in this last space. In fact, $[ Y] = c_{top}({\cal F})$ in the cohomology ring of $IGr$, $[ IGr ] =c_{top}(\Lambda^2{\cal U}^*)$ in the cohomology ring of $Gr$, and $\int_{Y}(\cdot )=\int_{IGr}(\cdot ) [Y] =\int_{Gr}(\cdot )[ Y] [ IGr] $. Therefore, one has:
\[
\chi({\cal O}_Y)=\int_Y todd(T_Y)=\int_{IGr} \frac{todd(T_{IGr})}{todd({\cal F})} c_{top}({\cal F})=
\]
\[
=\int_{Gr}\frac{todd(T_{Gr})}{todd({\cal F})}\frac{c_{top}(\Lambda^2({\cal U}^*))}{todd(\Lambda^2({\cal U}^*))} c_{top}({\cal F}).
\] 
In the second equality we have used the fact that the $Todd$ class is multiplicative with respect to short exact sequences, and we have applied this to the normal sequence for $Y$:
\[
0\to T_Y \to T_{IGr}|_Y \to {\cal F}|_Y \to 0 
\]
Then, as in the case of the classification of fourfolds in the classical Grassmannian, one can use the MACAULAY2 package SCHUBERT2 to do the computation. In the symplectic Grassmannian concretely there is essentially one bundle for which one needs to find the relation with the Chern classes of ${\cal Q}$, namely $({\cal U}^{\perp}/{\cal U})(1)$. This is given by the exact sequence:
\[
0\to {\cal U}^{\perp}/{\cal U}(1) \to {\cal Q}(1) \to {\cal U}^*(1) \to 0
\]
For the orthogonal Grassmannian one has to consider also the bundles which correspond to half integer sequences, and in particular the spin bundles. By relating the exterior and symmetric powers of this bundle with the "classical" bundles in the different cases, we can do the computation. For example, for $OGr(n, 2n+1)$, this (line) bundle is just the "square root" of ${\cal O}(1)$, so its first Chern class is half the one of the ample line bundle ${\cal O}(1)$.\newline

\subsection{Cohomology of the even orthogonal Grassmannian}

The method explained earlier cannot be used in general for the even orthogonal Grassmannian, because its cohomology is a little bit more complicated. In this case in fact, the Chern classes of the tautological quotient bundle ${\cal Q}$ do not generate multiplicatively the cohomology ring. One way to proceed is to use a "good" presentation of the cohomology, for which it is relatively easy to understand what the Chern classes of the homogeneous bundles are, and so to compute the integral in the equation mentioned above. The picture we are going to present applies actually, with appropriate modifications, to the other cases too.\newline

\subsubsection{Borel's presentation of the cohomology ring}

We present the multiplicative structure of the cohomology ring of the even orthogonal Grassmannian; it is usually referred to as \emph{Borel's presentation} of the cohomology (as a reference see  \cite{Demazure}, where the more general case of a homogeneous variety is treated). The idea is to inject the cohomology ring into one which is better understood, namely that of a complete flag variety. This has the property that every irreducible homogeneous bundle has rank one. Therefore it will always be possible to split completely a (not necessarily irreducible) bundle and to compute its Chern class as the product of the Chern classes of the line bundles of the splitting.

To be more precise, one considers the projection map $\pi:SO(2n)/B \to SO(2n)/P_k=OGr(k, 2n)$, where $B$ is a Borel subgroup contained in $P_k$. The homogeneous space $SO(2n)/B$ is the complete flag $OF(1,...,n, \CC^{2n})$ of isotropic planes in $\CC^{2n}$. The fiber of $\pi$ over a point $[R]\in OGr(k, 2n)$ is isomorphic to $F(1,..,k, R)\times OF(1,...,n-k, R^{\perp}/R)$ (here the first factor is the usual complete flag in $R$). As $\pi$ is a
fibration, the pull-back morphism $\pi^*$ is an injection. Therefore, after pulling back, one can work in the cohomology of the flag variety $G/B$, where $G=SO(2n)$. 

What one gains, as already anticipated, is the fact that this cohomology is well known and every homogeneous vector bundle can be split as the sum of line bundles. Indeed, let us denote by $X(T)$ the characters of the maximal torus $T$ in $B$. Then one has a morphism (defined in \cite[Section 3]{Demazure}):
\[
c: S_{\mathbb{Q}}[X(T)]\to H^*_{\mathbb{Q}}(G/B)
\]
from the symmetric algebra on the characters with rational coefficients to the rational cohomology of $G/B$. This morphism is surjective, and so identifies a quotient $S_{\mathbb{Q}}[X(T)]/I$ with $H^*_{\mathbb{Q}}(G/B)$. The ideal $I$ can be computed explicitly as the ideal generated by invariant polynomials (without constant terms) under the (natural) action of the Weyl group $W$ of $G$ on $S_{\mathbb{Q}}[X(T)]$. On the other hand, we will need to be able to write explicitly the morphism $c$. One has:
\[
c(f)=\sum_{w\in W|l(w)=deg(f)}\Delta_w(f) X^w
\]
for $f$ homogeneous in $S_{\mathbb{Q}}[X(T)]$, where $X^w$ is the Schubert cohomology class corresponding to the Weyl element $w$ (in the usual \emph{Schubert presentation} of the cohomology ring of homogeneous spaces). Moreover, given a reduced decomposition of $w=s_{i_1}\cdot ... \cdot s_{i_{l(w)}}$ in terms of simple reflections, $\Delta_w=\Delta_{s_{i_1}}\circ ... \circ \Delta_{s_{i_{l(w)}}}$, where
\[
\Delta_{s_i}(f)=\frac{f-s_i(f)}{\alpha_i},
\]
$\alpha_i$ being the i-th simple root. The value of $\Delta_w(f)$ doesn't depend on the chosen reduced decomposition (again, refer to \cite[Theorem 1]{Demazure}).\newline

\subsubsection{Chern class of homogeneous bundles}

Now, having this in mind, the last step to do the computation of the Euler characteristic is to express the Chern classes of a homogeneous vector bundle on $G/P_k$ in $H^*(G/B)$. As already pointed out, a homogeneous completely reducible bundle splits in $H^*(G/B)$ as the sum of line bundles. These line bundles correspond to representations of $B$ (as explained in Section \ref{prelhom}), i.e. to elements of $X(T)$. Fix ${\cal F}$ on $G/P_k$ coming from a representation $V$ of $P_k$. Then one has the weight space decomposition $V=\oplus_{\mu\in X(T)}V_{\mu}^{m_{\mu}}$. As a consequence, 
\[
\pi^*({\cal F})\sim \oplus_{\mu\in X(T)}{\cal L}_{\mu}^{m_{\mu}}
\]
where ${\cal L}_{\mu}$ is the line bundle corresponding to $\mu\in X(T)$. Here, the symbol $\sim$ stands for \emph{"are the same as $T$-homogeneous bundles"}, which implies that they have the same Chern classes. The last ingredient is the fact that the Chern class of ${\cal L}_{\mu}$ is represented inside $S_{\mathbb{Q}}[X(T)]$ by $1+\mu$. As a consequence
\[
\Chern (\pi^*({\cal F}))=\prod_{\mu\in X(T)}(1+\mu)^{m_{\mu}}
\] \newline

Knowing that, we can compute the Chern class of the bundle, products of cohomology classes, integrations, etc. In particular, the integration on $G/P_k$ of a class $f$ of the right degree is given by computing $\Delta_{w_0}(f)$, where $w_0$ is the longest element in $W/W(P_k)$.

\begin{example}
\emph{Here we report the code to use in order to compute the Euler characteristic for the case $(ow6)$, as an example:}\newline

\emph{---Definition of $S_{\mathbb{Q}}[X(T)]$} \newline

 $  S=QQ[a, b, c, d, e]; $ \newline
 
\emph{---Chern class of the tangent bundle and todd class (first terms)} \newline 
 
$ctan=(1+a+b+d)*(1+a+b+e)*(1+a+b-d)*(1+a+b-e)*(1+a+c+d)*(1+a+c+e)*(1+a+c-d)*(1+a+c-e)*(1+c+b+d)*(1+c+b+e)*(1+c+b-d)*(1+c+b-e)*(1+a+b)*(1+a+c)*(1+b+c); $ \newline

 $ ctan1=part({1}, ctan); $ \newline
 
 $ ctan2=part({2}, ctan); $ \newline
 
 $ ctan3=part({3}, ctan); $ \newline
 
 $ ctan4=part({4}, ctan); $ \newline
 
 $tdtan1=ctan1 // 2; $ \newline
 
 $ tdtan2=(ctan1*ctan1+ctan2) // 12; $ \newline
 
 $ tdtan3=(ctan1*ctan2) // 24; $ \newline
 
 $ tdtan4=(-ctan1*ctan1*ctan1*ctan1 +4*ctan1*ctan1*ctan2+3*ctan2*ctan2+ctan1*ctan3-ctan4) // 720; $ \newline
 
\emph{---Chern class of the vector bundle ${\cal F}$ and todd class (first terms)}\newline 
 
$cF=(1+((a+b+c+d+e)//2))*(1+((a+b+c-d-e)//2))*(1+((a+b+c+d+e)//2))*(1+((a+b+c-d-e)//2))*(1+((a+b+c+d+e)//2))*(1+((a+b+c-d-e)//2))*(1+((a+b+c+d+e)//2))*(1+((a+b+c-d-e)//2))*(1+a+b)*(1+b+c)*(1+a+c); $ \newline

 $ cF1=part({1}, cF); $ \newline
 
 $ cF2=part({2}, cF); $ \newline
 
 $ cF3=part({3}, cF); $ \newline
 
 $ cF4=part({4}, cF); $ \newline
 
 $ cF11=part({11}, cF); $ \newline
 
$ tdF1=cF1 // 2; $ \newline

 $ tdF2=(cF1*cF1+cF2) // 12; $ \newline
 
 $ tdF3=(cF1*cF2) // 24; $ \newline
 
 $ tdF4=(-cF1*cF1*cF1*cF1+4*cF1*cF1*cF2+3*cF2*cF2+cF1*cF3-cF4) // 720; $ \newline
 
\emph{---Definition of (the first terms of) $cr=\frac{todd(T_{OG})}{todd({\cal F})}$}\newline
 
$cr1=tdtan1-tdF1; $ \newline

 $ cr2=tdtan2-tdF2-tdF1*cr1; $ \newline
 
 $ cr3=tdtan3-tdF3-tdF1*cr2-tdF2*cr1; $ \newline
 
 $ cr4=tdtan4-tdF4-tdF1*cr3-tdF2*cr2-tdF3*cr1; $ \newline
 
\emph{---Definition of the class $int$ to be integrated}\newline
 
$int=cr4*cF11; $ \newline

\emph{--- Computation of $\Delta_{w_0}(int)$ }\newline

 $ intfifteen=(int-sub(int, {d=>c, c=>d})) // (c-d); $ \newline
 
 $ intfourteen=(intfifteen-sub(intfifteen, {c=>b, b=>c})) // (b-c); $ \newline
 
 $ intthirteen=(intfourteen-sub(intfourteen, {e=>d, d=>e})) // (d-e); $ \newline
 
 $ inttwelve=(intthirteen-sub(intthirteen, {b=>c, c=>b})) // (b-c); $ \newline
 
 $ inteleven=(inttwelve-sub(inttwelve, {d=>-e, e=>-d})) // (d+e); $ \newline
 
 $ intten=(inteleven-sub(inteleven, {d=>c, c=>d})) // (c-d); $ \newline
 
 $ intnine=(intten-sub(intten, {d=>e, e=>d})) // (d-e); $ \newline
 
 $ inteight=(intnine-sub(intnine, {b=>c, c=>b})) // (b-c); $ \newline
 
 $ intseven=(inteight-sub(inteight, {d=>c, c=>d})) // (c-d); $ \newline
 
 $ intsix=(intseven-sub(intseven, {a=>b, b=>a})) // (a-b); $ \newline
 
 $ intfive=(intsix-sub(intsix, {c=>b, b=>c})) // (b-c); $ \newline
 
 $ intfour=(intfive-sub(intfive, {c=>d, d=>c})) // (c-d); $ \newline
 
 $ intthree=(intfour-sub(intfour, {e=>d, d=>e})) // (d-e); $ \newline
 
 $ inttwo=(intthree-sub(intthree, {e=>-d, d=>-e})) // (d+e); $ \newline
 
 $ intone=(inttwo-sub(inttwo, {c=>d, d=>c})) // (c-d) $ \newline
 
 \end{example}

\begin{remark}[The case $(ob5)$, Table \ref{tablefoureven}]
It is the only case in which the Euler characteristic is not equal to $2$, but to $4$. In order to understand better why this happens, we studied in more detail the cohomology of ${\cal O}_Y$ by using the Koszul complex associated to the bundle ${\cal F}$. The method is standard (see Section \ref{secCYvsIHS}), the only difficulty in this case is to express the bundle $\Lambda^k{\cal F}^*$ as a sum of irreducible homogeneous bundles, but this can be done using the program LiE (\cite{Lie}). What one finds is that the variety $(ob5)$ is not connected, and actually it consists of two connected components, which are (therefore) Calabi-Yau varieties. Two questions which can be asked is whether these two components are isomorphic, and if there is a more geometric explanation for the existence of these two components (as is the case for the variety of zeroes of $S^2{\cal Q} $ in $Gr(m,2m)$).
\end{remark}

\section{Tables}
\label{apptables}

In this appendix we report all tables of the varieties we have found, as indicated in the classification theorems in the paper. The labelling of the varieties follows essentially the subdivision of the classification's proofs in lemmas and propositions. In the following we explicit some rules we adopted. 

For the ordinary (respectively symplectic, orthogonal) Grassmannians $Gr(k,n)$ ($IGr(k,m)$, $OGr(k,m)$) with $k\leq n$, if $k=2$ then the varieties are labelled by the letter $(b)$ (resp. $(sb)$, $(ob)$), if $k=3$ they are labelled by $(c)$ (resp. $(sc)$, $(oc)$), and if $k\geq 4$, labels start with the letter $(d)$ (resp. $(sd)$, $(od)$).

Some special cases are to be taken into account.
For the odd orthogonal Grassmannians, varieties inside $OGr(k-2,2k+1)$ are labelled by $(ox)$ and varieties inside $OGr(k,2k+1)$ by $(oy)$.
For the even orthogonal Grassmannians, varieties inside $OGr(k-2,2k)$ are labelled by $(ow)$ and varieties inside $OGr(k,2k)$ by $(oz)$.

Finally, varieties inside $OGr(k-1,2k)$ are labelled by the letter $(oe)$.

\begin{table}
\small
\begin{tabular}{llll}
   case & bundle ${\cal F}$ & Gr(k,n) & $\chi({\cal O}_Y)$ \\
   \hline
   (a) & complete intersection of hypersurfaces & $\mathbb{P}^n$ & \\
   \hline
   (b1) & ${\cal O}(1)\oplus{\cal O}(4)$ & Gr(2,5) & 2 \\
   (b2) & ${\cal O}(2)\oplus{\cal O}(3)$ & Gr(2,5) & 2 \\
   (b2.1) & ${\cal U}^*(2)$ & Gr(2,5) & 2  \smallskip \\
   
   (b4) & $S^2{\cal U}^*\oplus{\cal O}(3)$ & Gr(2,6) & 2 \\
   (b5) & ${\cal U}^*(1)\oplus{\cal O}(2)\oplus{\cal O}(1)$ & Gr(2,6) & 2 \\
   (b6) & ${\cal O}(1)^{\oplus 2}\oplus{\cal O}(2)^{\oplus 2}$ & Gr(2,6) & 2 \\  
   (b6.1) & ${\cal O}(1)^{\oplus 3}\oplus{\cal O}(3)$ & Gr(2,6) & 2 \\
   (b6.2) & ${\cal U}^*(1)^{\oplus 2}$ & Gr(2,6) & 2 \\
   (b12) & $S^3{\cal U}^*$ & Gr(2,6) & 3 \smallskip \\
   
   (b3) & ${\cal Q}(1)\oplus {\cal O}(1)$ & Gr(2,7) & 2 \\
   (b7) & ${\cal O}(1)^{\oplus 5}\oplus {\cal O}(2)$ & Gr(2,7) & 2 \\
   (b8) & $S^2{\cal U}^*\oplus{\cal O}(1)^{\oplus 2}\oplus {\cal O}(2)$ & Gr(2,7) & 2 \\
   (b8.1) & ${\cal U}^*(1)\oplus{\cal O}(1)^{\oplus 4}$ & Gr(2,7) & 2 \\
   (b9) & $S^2{\cal U}^*\oplus{\cal U}^*(1)\oplus{\cal O}(1)$ & Gr(2,7) & 2 \\
   (b10) & $\Lambda^4{\cal Q}\oplus{\cal O}(3)$ & Gr(2,7) & 2 \smallskip \\
   
   (b10.1) & $\Lambda^5{\cal Q}\oplus{\cal O}(1)\oplus{\cal O}(2)$ & Gr(2,8) & 2 \\
   (b10.2) & $\Lambda^5{\cal Q}\oplus{\cal U}^*(1)$ & Gr(2,8) & 2 \\
   (b11) & ${\cal O}(1)^{\oplus 8}$ & Gr(2,8) & 2 \\
   (b13) & $S^2({\cal U}^*)^{\oplus 2} \oplus {\cal O}(1)^{\oplus 2}$ & Gr(2,8) & 2 \\
   (b14) & $S^2({\cal U}^*) \oplus {\cal O}(1)^{\oplus 5}$ & Gr(2,8) & 2 \smallskip \\
   
   (b10.3) & $\Lambda^6{\cal Q}\oplus S^2{\cal U}^*$ & Gr(2,9) & 2 \\
   (b10.4) & $\Lambda^6{\cal Q}\oplus{\cal O}(1)^{\oplus 3}$ & Gr(2,9) & 2 \\
   \hline
   
   (c1) & ${\cal O}(1)^{\oplus 4} \oplus {\cal O}(2)$ & Gr(3,6) & 2 \\
   (c1.1) & ${\cal Q}(1) \oplus {\cal O}(1)^{\oplus 2}$ & Gr(3,6) & 2 \\
   (c2) & $\Lambda^2{\cal Q} \oplus {\cal O}(1)\oplus {\cal O}(3)$ & Gr(3,6) & 2 \\
   (c2.1) & $\Lambda^2{\cal Q} \oplus {\cal O}(2)^{\oplus 2}$ & Gr(3,6) & 2 \smallskip \\
   
   (c4) & $S^2{\cal U}^* \oplus {\cal O}(1) \oplus {\cal O}(2)$ & Gr(3,7) & 2 \\
   (c5) & $\Lambda^3{\cal Q} \oplus {\cal O}(1)^{\oplus 4}$ & Gr(3,7) & 2 \\
   (c5.1) & $\Lambda^3{\cal Q} \oplus \Lambda^2{\cal U}^* \oplus {\cal O}(2)$ & Gr(3,7) & 2 \\
   (c6) & $(\Lambda^2{\cal U}^*)^{\oplus 2} \oplus {\cal O}(1) \oplus {\cal O}(2)$ & Gr(3,7) & 2 \\
   (c6.1) & $\Lambda^2{\cal U}^* \oplus {\cal O}(1)^{\oplus 5}$ & Gr(3,7) & 2 \smallskip \\
   
   (c7) & $\Lambda^3{\cal Q} \oplus {\cal O}(2)$ & Gr(3,8) & 2 \\
   (c7.1) & $\Lambda^4{\cal Q} \oplus S^2{\cal U}^*$ & Gr(3,8) & 2 \\
   (c7.2) & $\Lambda^4{\cal Q} \oplus (\Lambda^2{\cal U}^*)^{\oplus 2}$ & Gr(3,8) & 2 \\
   (c7.3) & $S^2{\cal U}^* \oplus \Lambda^2{\cal U}^* \oplus {\cal O}(1)^{\oplus 2}$ & Gr(3,8) & 2 \\
   (c7.4) & $(\Lambda^2{\cal U}^*)^{\oplus 3} \oplus {\cal O}(1)^{\oplus 2}$ & Gr(3,8) & 2 \\
   \hline
   
   (d6) & $S^2{\cal U}^* \oplus {\cal O}(1) \oplus {\cal O}(2)$ & Gr(4,8) & 4 \\
   (d9) & $\Lambda^2({\cal U}^*) \oplus \Lambda^3({\cal U}^*) \oplus {\cal O}(1)^{\oplus 2}$ & Gr(4,8) & 2 \\
   (d9.1) & $\Lambda^3{\cal Q} \oplus \Lambda^2{\cal U}^* \oplus {\cal O}(1)^{\oplus 2}$ & Gr(4,8) & 2 \smallskip \\
   
   (d1) & $\Lambda^3{\cal U}^* \oplus (\Lambda^2{\cal U}^*)^{\oplus 2}$ & Gr(4,9) & 2 \\
   (d8) & $\Lambda^2{\cal U}^* \oplus \Lambda^3{\cal Q}$ & Gr(4,9) & 2 \smallskip \\
   
   (d5) & $(S^2{\cal U}^*)^{\oplus 2} $ & Gr(4,10) & 0 \\
   (d7) & $\Lambda^3{\cal Q} $ & Gr(4,10) & 3 \smallskip \\
   
   (d2) & $(\Lambda^2{\cal U}^*)^{\oplus 2} \oplus {\cal O}(2)$ & Gr(5,10) & 2 \\
   (d2.1) & $\Lambda^2{\cal Q} \oplus \Lambda^2{\cal U}^* \oplus {\cal O}(2)$ & Gr(5,10) & 2 \smallskip \\
   
   (d4) & $\Lambda^2{\cal U}^* \oplus S^2{\cal U}^* \oplus {\cal O}(1)$ & Gr(5,11) & 2 \smallskip \\
   
   (d3) & $(\Lambda^2{\cal U}^*)^{\oplus 2} \oplus {\cal O}(1)^{\oplus 2}$ & Gr(6,12) & 2 \\
   (d3.1) & $\Lambda^2{\cal Q} \oplus \Lambda^2{\cal U}^* \oplus {\cal O}(1)^{\oplus 2}$ & Gr(6,12) & 2 \\
   
\end{tabular}
\caption{Fourfolds in ordinary Grassmannians, see Theorem \ref{thm4ordin}}
\label{tablefourord}
\end{table}

\begin{table}
\small
\begin{tabular}{llll}
   case & bundle ${\cal F}$ & IGr(k,2n) & $\chi({\cal O}_Y)$ \\
   \hline
   (sb0) & $({\cal U}^{\perp}/ {\cal U})(1)\oplus{\cal O}(3)$ & IGr(2,6) & 2 \\
   (sb0.1) & $({\cal U}^{\perp}/ {\cal U})(2)\oplus{\cal O}(1)$ & IGr(2,6) & 2 \smallskip \\
   
   (sb1) & $ \Lambda^2({\cal U}^{\perp}/ {\cal U})(1)\oplus {\cal O}(1)^{\oplus 2}$ & IGr(2,8) & 2 \\
   (sb2) & $({\cal U}^{\perp}/ {\cal U})(1)\oplus{\cal O}(1)^{\oplus 3}$ & IGr(2,8) & 2 \\
   (sb2.1) & $({\cal U}^{\perp}/ {\cal U})(1)\oplus{\cal U}^*\oplus{\cal O}(2)$ & IGr(2,8) & 2 \\
   (sb3) & $({\cal U}^{\perp}/ {\cal U})(1)\oplus S^2{\cal U}^*$ & IGr(2,8) & 2 \\
   \hline
   
   (sc0.1) & $({\cal U}^{\perp}/ {\cal U})(1)\oplus (\Lambda^2{\cal U}^*)^{\oplus 2}$ & IGr(3,8) & 2 \\
   (sc0.2) & $({\cal U}^{\perp}/ {\cal U})(1)\oplus S^2{\cal U}^*$ & IGr(3,8) & 2 \\
   (sc0.3) & $({\cal U}^{\perp}/ {\cal U})(1)\oplus {\cal U}^* \oplus {\cal O}(1)^{\oplus 3}$ & IGr(3,8) & 2 \\
   (sc0.4) & $({\cal U}^{\perp}/ {\cal U})(1)^{\oplus 2}\oplus {\cal U}^* \oplus {\cal O}(1)$ & IGr(3,8) & 2 \\
   
\end{tabular}
\caption{Fourfolds in symplectic Grassmannians, see Theorem \ref{thm4class}}
\label{tablefoursym}
\end{table}

\begin{table}
\small
\begin{tabular}{llll}
   case & bundle ${\cal F}$ & OGr(k,n) & $\chi({\cal O}_Y)$ \\
   \hline
   (ob0) & ${\cal T}_{+\frac{1}{2}}(1)\oplus {\cal O}(1)^{\oplus 2}\oplus{\cal O}(2)$ & OGr(2,9) & 2 \\
   (ob0.1) & ${\cal T}_{+\frac{1}{2}}(1) \oplus {\cal U}^*(1)\oplus{\cal O}(1)$ & OGr(2,9) & 2 \\
   \hline
   
   (ox1) & ${\cal T}_{+\frac{1}{2}}(1) \oplus {\cal O}(3)$ & OGr(2,7) & 2 \\
   (ox2) & ${\cal T}_{+\frac{1}{2}}(2) \oplus {\cal O}(1)$ & OGr(2,7) & 2  \smallskip \\
   
   (ox3) & ${\cal T}_{+\frac{1}{2}}(1)^{\oplus 3} \oplus {\cal O}(1)^{\oplus 2}$ & OGr(3,9) & 2 \\
   (ox4) & ${\cal T}_{+\frac{1}{2}}(1)^{\oplus 2} \oplus \Lambda^2{\cal U}^* \oplus {\cal O}(1)$ & OGr(3,9) & 2 \\
   (ox5) & ${\cal T}_{+\frac{1}{2}}(1) \oplus  (\Lambda^2{\cal U}^*)^{\oplus 2}$ & OGr(3,9) & 2 \\
   (ox6) & ${\cal T}_{+\frac{1}{2}}(1) \oplus S^2{\cal U}^*$ & OGr(3,9) & 2 \\
   \hline
   
   (oy1) & ${\cal T}_{+\frac{1}{2}}(1) \oplus {\cal T}_{+\frac{1}{2}}(3)$ & OGr(3,7) & 2 \\
   (oy1.1) & ${\cal T}_{+\frac{1}{2}}(2)^{\oplus 2}$ & OGr(3,7) & 2 \smallskip \\
   
   (oy2) & ${\cal T}_{+\frac{1}{2}}(1)^{\oplus 2} \oplus \Lambda^3{\cal U}^*$ & OGr(4,9) & 2 \\
   (oy3) & ${\cal T}_{+\frac{1}{2}}(1)^{\oplus 2} \oplus ({\cal T}_{+\frac{1}{2}}(1) \otimes {\cal U}^*)$ & OGr(4,9) & 2 \\
   (oy4) & ${\cal T}_{+\frac{1}{2}}(1)^{\oplus 5} \oplus {\cal T}_{+\frac{1}{2}}(2)$ & OGr(4,9) & 2 \\
   (oy5) & ${\cal T}_{+\frac{1}{2}}(1)^{\oplus 4} \oplus {\cal O}(1)^{\oplus 2} $ & OGr(4,9) & 2 \smallskip \\
   
   (oy6) & ${\cal T}_{+\frac{1}{2}}(1)^{\oplus 2} \oplus \Lambda^2({\cal U}^*) $ & OGr(6,13) & 2 \\
   
\end{tabular}
\caption{Fourfolds in odd orthogonal Grassmannians, see Theorem \ref{thm4class}}
\label{tablefourodd}
\end{table}

\begin{table}
\small
\begin{tabular}{llll}
   case & bundle ${\cal F}$ & OGr(k,n) & $\chi({\cal O}_Y)$ \\
   \hline
   (ob1) & ${\cal T}_{+\frac{1}{2}}(1)^{\oplus 2} \oplus{\cal O}(3)$ & OGr(2,10) & 2 \\
   (ob1.2) & ${\cal T}_{+\frac{1}{2}}(1) \oplus{\cal T}_{-\frac{1}{2}}(1) \oplus{\cal O}(3)$ & OGr(2,10) & 2 \\
   (ob2) & ${\cal T}_{+\frac{1}{2}}(1) \oplus{\cal O}(1)^{\oplus 5}$ & OGr(2,10) & 2 \\
   (ob3) & ${\cal T}_{+\frac{1}{2}}(1) \oplus S^2({\cal U}^*) \oplus{\cal O}(1)^{\oplus 2}$ & OGr(2,10) & 2 \smallskip \\
   
   (ob4) & ${\cal T}_{+\frac{1}{2}}(1) \oplus{\cal O}(1)^{\oplus 5}$ & OGr(2,12) & 2  \\
   (ob4.1) & ${\cal T}_{+\frac{1}{2}}(1) \oplus S^2({\cal U}^*) \oplus{\cal O}(1)^{\oplus 2}$ & OGr(2,12) & 2  \smallskip \\
   
   (ob5) & ${\cal T}_{+\frac{1}{2}}(1) \oplus{\cal O}(3)$ & OGr(2,14) & 4 \\
   \hline
   
   (ow1) & ${\cal T}_{+\frac{1}{2}}(1)^{\oplus 2} \oplus{\cal O}(3)$ & OGr(2,8) & 2 \\
   (ow1.2) & ${\cal T}_{+\frac{1}{2}}(1) \oplus{\cal T}_{-\frac{1}{2}}(1) \oplus {\cal O}(3)$ & OGr(2,8) & 2 \\
   (ow2) & $({\cal U}^{\perp}/ {\cal U})(1) \oplus {\cal O}(1)$ & OGr(2,8) & 2 \\
   (ow3) & $({\cal T}_{+\frac{1}{2}}(1)\otimes {\cal U}^*) \oplus{\cal O}(1)$ & OGr(2,8) & 2 \\
   (ow4) & $(1,1;1;1) \oplus {\cal O}(1)^{\oplus 2}$ & OGr(2,8) & 2 \\
   (ow5) & ${\cal T}_{+\frac{1}{2}}(1) \oplus{\cal O}(1)^{\oplus 2} \oplus {\cal O}(2)$ & OGr(2,8) & 2 \\
   (ow9) & ${\cal T}_{+\frac{1}{2}}(2) \oplus {\cal T}_{+\frac{1}{2}}(1) \oplus {\cal O}(1)$ & OGr(2,8) & 2 \\
   (ow10) & ${\cal T}_{-\frac{1}{2}}(2) \oplus {\cal T}_{+\frac{1}{2}}(1) \oplus {\cal O}(1)$ & OGr(2,8) & 2 \\
   (ow11) & ${\cal U}^*(1) \oplus {\cal T}_{+\frac{1}{2}}(1) \oplus {\cal O}(1)$ & OGr(2,8) & 2 \smallskip \\
   
   (ow6) & ${\cal T}_{+\frac{1}{2}}(1)^{\oplus 4} \oplus \Lambda^2({\cal U}^*)$ & OGr(3,10) & 2 \\
   (ow7) & ${\cal T}_{+\frac{1}{2}}(1)^{\oplus 2} \oplus {\cal T}_{-\frac{1}{2}}(1)^{\oplus 2} \oplus \Lambda^2({\cal U}^*)$ & OGr(3,10) & 2 \\
   (ow8) & ${\cal T}_{+\frac{1}{2}}(1)^{\oplus 3} \oplus {\cal T}_{-\frac{1}{2}}(1) \oplus \Lambda^2({\cal U}^*)$ & OGr(3,10) & 2 \\
   \hline
   
   (oz3) & ${\cal O}(\frac{1}{2}) \oplus{\cal O}(\frac{5}{2})$ & OGr(4,8) & 2 \\
   (oz7) & ${\cal O}(\frac{3}{2}) \oplus{\cal O}(\frac{3}{2})$ & OGr(4,8) & 2 \smallskip \\
   
   (oz1) & ${\cal U}^*(-\frac{1}{2}) \oplus{\cal O}(\frac{5}{2})$ & OGr(5,10) & 2 \\
   (oz4) & ${\cal O}(\frac{1}{2})^{\oplus 4} \oplus{\cal O}(1)^{\oplus 2}$ & OGr(5,10) & 2 \\
   (oz5) & ${\cal O}(\frac{1}{2})^{\oplus 5} \oplus{\cal O}(\frac{3}{2})$ & OGr(5,10) & 2 \\
   (oz6) & ${\cal O}(\frac{1}{2}) \oplus {\cal U}^*(\frac{1}{2})$ & OGr(5,10) & 2 \smallskip \\
   
   (oz2) & ${\cal U}^*(-\frac{1}{2}) \oplus {\cal O}(\frac{1}{2})^{\oplus 4} \oplus{\cal O}(1)$ & OGr(6,12) & 2 \\
   
\end{tabular}
\caption{Fourfolds in even orthogonal Grassmannians, see Theorem \ref{thm4class}}
\label{tablefoureven}
\end{table}

\begin{table}
\small
\begin{tabular}{llll}
   case & bundle ${\cal F}$ & OGr(n-1,2n) & $\chi({\cal O}_Y)$ \\
   \hline
   (oe1) & $({\cal U}^*\otimes {\cal L}_+)\oplus {\cal L}_- \oplus {\cal L}_-^{\otimes 2}$ & OGr(3,8) & 2 \\
   (oe2) & $({\cal L}_+^{\otimes 2})^{\oplus 2} \oplus {\cal L}_-^{\otimes 2} \oplus {\cal L}_-^{\oplus 2}$ & OGr(3,8) & 2 \\
   (oe3) & $\Lambda^2{\cal U}^*\oplus {\cal L}_-^{\otimes 2} \oplus {\cal L}_+^{\otimes 2}$ & OGr(3,8) & 2 \\
   (oe4) & $\Lambda^2{\cal U}^*\oplus {\cal L}_-(1) \oplus {\cal L}_+$ & OGr(3,8) & 2 \\
   (oe5) & ${\cal O}(1)^{\oplus 3} \oplus {\cal L}_- \oplus {\cal L}_+$ & OGr(3,8) & 2 \\
   (oe6) & ${\cal O}(1)\oplus {\cal L}_-^{\otimes 2} \oplus {\cal L}_+^{\otimes 2} \oplus {\cal L}_- \oplus {\cal L}_+$ & OGr(3,8) & 2 \\
   (oe7) & ${\cal O}(1)^{\oplus 2} \oplus {\cal L}_-^{\oplus 2} \oplus {\cal L}_+^{\otimes 2}$ & OGr(3,8) & 2 \\
   (oe8) & ${\cal O}(2)\oplus {\cal L}_-^{\oplus 2} \oplus {\cal L}_+^{\oplus 2}$ & OGr(3,8) & 2 \\
   (oe9) & ${\cal L}_+^{\otimes 2}(1)\oplus {\cal L}_-^{\oplus 3} \oplus {\cal L}_+$ & OGr(3,8) & 2 \\
   (oe10) & ${\cal L}_-^{\oplus 4} \oplus {\cal L}_+^{\otimes 4}$ & OGr(3,8) & 2 \\
   (oe11) & ${\cal O}(1)\oplus {\cal L}_+(1)\oplus {\cal L}_-^{\oplus 2} \oplus {\cal L}_+$ & OGr(3,8) & 2 \\
   (oe12) & ${\cal L}_+^{\otimes 2}\oplus {\cal L}_-^{\oplus 3} \oplus {\cal L}_+(1)$ & OGr(3,8) & 2 \\
   (oe13) & ${\cal L}_-(1) \oplus {\cal L}_+^{\otimes 2}\oplus {\cal L}_-^{\oplus 2} \oplus {\cal L}_+$ & OGr(3,8) & 2 \\
   (oe14) & ${\cal O}(1)\oplus {\cal L}_-^{\oplus 3} \oplus {\cal L}_+^{\otimes 3}$ & OGr(3,8) & 2 \\
   (oe15) & ${\cal L}_-^{\otimes 2} \oplus {\cal L}_+^{\otimes 3}\oplus {\cal L}_-^{\oplus 2} \oplus {\cal L}_+$ & OGr(3,8) & 2 \smallskip \\
   
   (oe16) & ${\cal L}_-^{\oplus 5} \oplus {\cal L}_+^{\oplus 5}$ & OGr(4,10) & 2 \\
   (oe17) & $\Lambda^2{\cal U}^*\oplus {\cal L}_-^{\oplus 2} \oplus {\cal L}_+^{\oplus 2}$ & OGr(4,10) & 2 \\
   
\end{tabular}
\caption{Fourfolds in OGr(n-1,2n), see Theorem \ref{theorem4}}
\label{tablefourn-1}
\end{table}

\begin{table}
\small
\begin{tabular}{lll}
   case & bundle ${\cal F}$ & Gr(k,n) \\
   \hline
   (a) & complete intersection of hypersurfaces & $\mathbb{P}^n$ \\
   \hline
   (b1) & ${\cal O}(4)$ & Gr(2,4)  \smallskip \\
   
   (b2) & ${\cal O}(1)^{\oplus 2}\oplus{\cal O}(3)$ & Gr(2,5) \\
   (b3) & ${\cal O}(2)^{\oplus 2}\oplus{\cal O}(1)$ & Gr(2,5) \\
   (b4) & $\Lambda^2{\cal Q}(1)$ & Gr(2,5) \\
   (b5) & ${\cal U}^*(1)\oplus {\cal O}(2)$ & Gr(2,5)  \smallskip \\
   
   (b6) & ${\cal Q}(1)\oplus{\cal O}(1)$ & Gr(2,6) \\
   (b9) & $\Lambda^3{\cal Q}\oplus{\cal O}(3)$ & Gr(2,6) \\
   (b12) & $S^2{\cal U}^*\oplus{\cal O}(1)\oplus{\cal O}(2)$ & Gr(2,6) \\  
   (b13) & ${\cal O}(1)^{\oplus 4}\oplus{\cal O}(2)$ & Gr(2,6) \\
   (b14) & $S^2{\cal U}^*\oplus{\cal U}^*(1)$ & Gr(2,6) \\
   (b15) & ${\cal U}^*(1)\oplus{\cal O}(1)^{\oplus 3}$ & Gr(2,6) \smallskip \\
   
   (b8) & $\Lambda^4 {\cal Q}\oplus {\cal U}^*(1)$ & Gr(2,7) \\
   (b10) & $\Lambda^4 {\cal Q}\oplus {\cal O}(2)\oplus{\cal O}(1)$ & Gr(2,7) \\
   (b16) & $(S^2{\cal U}^*)^{\oplus 2}\oplus{\cal O}(1)$ & Gr(2,7) \\
   (b17) & $S^2{\cal U}^*\oplus{\cal O}(1)^{\oplus 4}$ & Gr(2,7) \\
   (b18) & ${\cal O}(1)^{\oplus 7}$ & Gr(2,7) \smallskip \\
   
   (b7) & $\Lambda^5{\cal Q}\oplus S^2{\cal U}^*$ & Gr(2,8) \\
   (b11) & $\Lambda^5{\cal Q}\oplus {\cal O}(1)^{\oplus 3}$ & Gr(2,8) \\
   \hline
   
   (c2) & ${\cal Q}(1) \oplus \Lambda^2{\cal Q}$ & Gr(3,6) \\
   (c3) & ${\cal Q}(1) \oplus \Lambda^2{\cal U}^*$ & Gr(3,6) \\
   (c4) & $\Lambda^2{\cal Q} \oplus {\cal O}(1)^{\oplus 2}\oplus {\cal O}(2)$ & Gr(3,6) \\
   (c8) & ${\cal O}(1)^{\oplus 6}$ & Gr(3,6) \smallskip \\
   
   (c6) & $\Lambda^3{\cal Q}^{\oplus 2} \oplus {\cal O}(1)$ & Gr(3,7) \\
   (c7) & $\Lambda^3{\cal Q}\oplus\Lambda^2{\cal U}^* \oplus {\cal O}(1)^{\oplus 2}$ & Gr(3,7) \\
   (c9) & ${\cal O}(1)^{\oplus 3} \oplus S^2{\cal U}^* $ & Gr(3,7) \\
   (c10) & $\Lambda^2{\cal U}^* \oplus {\cal O}(1)^{\oplus 3}$ & Gr(3,7) \smallskip \\
   
   (c1) & $\Lambda^3{\cal Q} \oplus {\cal O}(1)^{\oplus 2}$ & Gr(3,8) \\
   (c11) & $(S^2{\cal U}^*)^{\oplus 2}$ & Gr(3,8) \\
   (c12) & $S^2{\cal U}^* \oplus (\Lambda^2{\cal U}^*)^{\oplus 2}$ & Gr(3,8) \\
   (c13) & $(\Lambda^2{\cal U}^*)^{\oplus 4}$ & Gr(3,8) \\
   \hline
   
   (d3) & $(\Lambda^2{\cal U}^*)^{\oplus 2} \oplus {\cal O}(2)$ & Gr(4,8) \\
   (d3.1) & $\Lambda^2{\cal U}^* \oplus \Lambda^2({\cal Q}) \oplus {\cal O}(2)$ & Gr(4,8) \\
   (d4) & $S^2{\cal U}^* \oplus {\cal O}(1)^{\oplus 3}$ & Gr(4,8) \smallskip \\
   
   (d1) & $\Lambda^2{\cal U}^* \oplus S^2{\cal U}^*\oplus {\cal O}(1)$ & Gr(4,9) \smallskip \\
   
   (d2) & $(\Lambda^2{\cal U}^*)^{\oplus 2}\oplus {\cal O}(1)^{\oplus 2}$ & Gr(5,10) \\
   (d2.1) & $\Lambda^2{\cal U}^*\oplus \Lambda^2{\cal Q} \oplus {\cal O}(1)^{\oplus 2}$ & Gr(5,10) \\
   
\end{tabular}
\caption{Threefolds in ordinary Grassmannians, see Theorem \ref{thm3ord}}
\label{tablethreeord}
\end{table}

\begin{table}
\small
\begin{tabular}{lll}
   case & bundle ${\cal F}$ & IGr(k,2n) \\
   \hline
   (sb1) & $({\cal U}^{\perp}/ {\cal U})(2)\oplus{\cal U}^*$ & IGr(2,6) \\
   (sb2) & $({\cal U}^{\perp}/ {\cal U})(1)\oplus{\cal O}(1)\oplus{\cal O}(2)$ & IGr(2,6) \\
   (sb3) & $({\cal U}^{\perp}/ {\cal U})(1)\oplus{\cal U}^*(1)$ & IGr(2,6) \smallskip \\
   
   (sb4) & $ \Lambda^2({\cal U}^{\perp}/ {\cal U})(1)\oplus {\cal O}(1)\oplus{\cal U}^*$ & IGr(2,8) \\
   (sb5) & $({\cal U}^{\perp}/ {\cal U})(1)\oplus {\cal U}^*\oplus{\cal O}(1)^{\oplus 2}$ & IGr(2,8) \\
   \hline
   
   (sc1) & $({\cal U}^{\perp}/ {\cal U})(1)\oplus \Lambda^2{\cal U}^* \oplus{\cal U}^*\oplus{\cal O}(1)$ & IGr(3,8) \\
   
\end{tabular}
\caption{Threefolds in symplectic Grassmannians, see Theorem \ref{thm3class}}
\label{tablethreesym}
\end{table}

\begin{table}
\small
\begin{tabular}{lll}
   case & bundle ${\cal F}$ & OGr(k,n) \\
   \hline
   (ob1) & ${\cal T}_{+\frac{1}{2}}(1)\oplus S^2{\cal U}^*\oplus{\cal O}(1)$ & OGr(2,9) \\
   (ob2) & ${\cal T}_{+\frac{1}{2}}(1) \oplus {\cal O}(1)^{\oplus 4}$ & OGr(2,9) \smallskip \\
   
   (ob3) & ${\cal T}_{+\frac{1}{2}}(1)\oplus S^2{\cal U}^*\oplus{\cal O}(1)$ & OGr(2,11) \\
   (ob4) & ${\cal T}_{+\frac{1}{2}}(1) \oplus {\cal O}(1)^{\oplus 4}$ & OGr(2,11) \\
   \hline
   
   (ox1) & ${\cal T}_{+\frac{1}{2}}(1) \oplus {\cal T}_{+\frac{1}{2}}(2)$ & OGr(2,7) \\
   (ox2) & ${\cal T}_{+\frac{1}{2}}(1) \otimes {\cal U}^*$ & OGr(2,7) \\
   (ox3) & ${\cal T}_{+\frac{1}{2}}(1) \oplus {\cal O}(1)\oplus {\cal O}(2)$ & OGr(2,7) \\
   (ox4) & ${\cal T}_{+\frac{1}{2}}(1) \oplus {\cal U}^*(1)$ & OGr(2,7) \\
   (ox5) & $S^2{\cal T}_{+\frac{1}{2}}(1) \oplus {\cal O}(1)$ & OGr(2,7)  \smallskip \\
   
   (ox6) & ${\cal T}_{+\frac{1}{2}}(1)^{\oplus 4} \oplus {\cal O}(1)$ & OGr(3,9) \\
   (ox7) & ${\cal T}_{+\frac{1}{2}}(1)^{\oplus 3} \oplus \Lambda^2{\cal U}^*$ & OGr(3,9) \\
   \hline
   
   (oy2) & ${\cal O}(\frac{1}{2})\oplus{\cal O}(\frac{3}{2})\oplus{\cal O}(1)$ & OGr(3,7) \smallskip \\
   
   (oy3) & ${\cal O}(\frac{1}{2})^{\oplus 6} \oplus {\cal O}(1)$ & OGr(4,9) \smallskip \\
   
   (oy1) & ${\cal O}(\frac{1}{2}) \oplus \Lambda^2{\cal U}^*$ & OGr(5,11) \\
   
\end{tabular}
\caption{Threefolds in odd orthogonal Grassmannians, see Theorem \ref{thm3class}}
\label{tablethreeodd}
\end{table}

\begin{table}
\small
\begin{tabular}{lll}
   case & bundle ${\cal F}$ & OGr(k,n) \\
   \hline
   (ob5) & ${\cal T}_{+\frac{1}{2}}(1)^{\oplus 2} \oplus{\cal O}(2)\oplus{\cal O}(1)$ & OGr(2,10) \\
   (ob6) & ${\cal T}_{+\frac{1}{2}}(1)^{\oplus 2} \oplus{\cal U}^*(1)$ & OGr(2,10) \\
   (ob7) & ${\cal T}_{+\frac{1}{2}}(1)\oplus{\cal T}_{-\frac{1}{2}}(1) \oplus{\cal O}(2)\oplus{\cal O}(1)$ & OGr(2,10) \\
   (ob8) & ${\cal T}_{+\frac{1}{2}}(1)\oplus{\cal T}_{-\frac{1}{2}}(1) \oplus{\cal U}^*(1)$ & OGr(2,10) \smallskip \\
   
   (ob9) & ${\cal T}_{+\frac{1}{2}}(1) \oplus{\cal O}(2)\oplus{\cal O}(1)$ & OGr(2,14) \\
   (ob10) & ${\cal T}_{+\frac{1}{2}}(1) \oplus{\cal U}^*(1)$ & OGr(2,14) \\
   \hline
   
   (ow1) & ${\cal T}_{+\frac{1}{2}}(2)\oplus{\cal T}_{+\frac{1}{2}}(1)^{\oplus 2}$ & OGr(2,8) \\
   (ow2) & ${\cal T}_{+\frac{1}{2}}(2)\oplus{\cal T}_{-\frac{1}{2}}(1)^{\oplus 2}$ & OGr(2,8) \\
   (ow3) & ${\cal T}_{+\frac{1}{2}}(2)\oplus{\cal T}_{+\frac{1}{2}}(1)\oplus{\cal T}_{-\frac{1}{2}}(1)$ & OGr(2,8) \\
   (ow4) & $({\cal T}_{+\frac{1}{2}}(1)\otimes {\cal U}^*) \oplus{\cal T}_{+\frac{1}{2}}(1)$ & OGr(2,8) \\
   (ow5) & $({\cal T}_{+\frac{1}{2}}(1)\otimes {\cal U}^*) \oplus{\cal T}_{-\frac{1}{2}}(1)$ & OGr(2,8) \\
   (ow6) & ${\cal T}_{+\frac{1}{2}}(1)^{\oplus 2} \oplus{\cal O}(1) \oplus {\cal O}(2)$ & OGr(2,8) \\
   (ow7) & ${\cal T}_{+\frac{1}{2}}(1)\oplus {\cal T}_{-\frac{1}{2}}(1) \oplus{\cal O}(1) \oplus {\cal O}(2)$ & OGr(2,8) \\
   (ow8) & ${\cal T}_{+\frac{1}{2}}(1)^{\oplus 2} \oplus{\cal U}^*(1)$ & OGr(2,8) \\
   (ow9) & ${\cal T}_{+\frac{1}{2}}(1)\oplus {\cal T}_{-\frac{1}{2}}(1) \oplus{\cal U}^*(1)$ & OGr(2,8) \\
   (ow10) & ${\cal T}_{+\frac{1}{2}}(1)\oplus S^2{\cal U}^*\oplus {\cal O}(1)$ & OGr(2,8) \\
   (ow11) & ${\cal T}_{+\frac{1}{2}}(1)\oplus (1,1;1;1) \oplus{\cal O}(1)$ & OGr(2,8) \\
   (ow12) & ${\cal T}_{+\frac{1}{2}}(1)\oplus (1,1;1;-1) \oplus{\cal O}(1)$ & OGr(2,8) \\
   (ow13) & ${\cal T}_{+\frac{1}{2}}(1) \oplus({\cal U}^{\perp}/ {\cal U})(1)$ & OGr(2,8) \\
   (ow14) & ${\cal T}_{+\frac{1}{2}}(1) \oplus{\cal O}(1)^{\oplus 4}$ & OGr(2,8) \smallskip \\
   
   (ow15) & ${\cal T}_{+\frac{1}{2}}(1)^{\oplus 6}$ & OGr(3,10) \\
   (ow16) & ${\cal T}_{+\frac{1}{2}}(1)^{\oplus 5} \oplus {\cal T}_{-\frac{1}{2}}(1)$ & OGr(3,10) \\
   (ow17) & ${\cal T}_{+\frac{1}{2}}(1)^{\oplus 4} \oplus {\cal T}_{-\frac{1}{2}}(1)^{\oplus 2}$ & OGr(3,10) \\
   (ow18) & ${\cal T}_{+\frac{1}{2}}(1)^{\oplus 3} \oplus {\cal T}_{-\frac{1}{2}}(1)^{\oplus 3}$ & OGr(3,10) \\
   \hline
   
   (oz1) & ${\cal O}(\frac{1}{2})^{\oplus 2} \oplus{\cal O}(2)$ & OGr(4,8) \smallskip \\
   
   (oz2) & ${\cal O}(\frac{1}{2}) \oplus{\cal O}(2)\oplus \Lambda^4{\cal U}^*(-\frac{1}{2})$ & OGr(5,10) \\
   (oz3) & ${\cal O}(\frac{3}{2}) \oplus{\cal O}(1)\oplus \Lambda^4{\cal U}^*(-\frac{1}{2})$ & OGr(5,10) \\
   (oz4) & ${\cal O}(\frac{1}{2})^{\oplus 6} \oplus {\cal O}(1)$ & OGr(5,10) \smallskip \\
   
   (oz5) & ${\cal O}(\frac{1}{2})^{\oplus 6}\oplus \Lambda^5{\cal U}^*(-\frac{1}{2})$ & OGr(6,12) \\
   
\end{tabular}
\caption{Threefolds in even orthogonal Grassmannians, see Theorem \ref{thm3class}}
\label{tablethreeeven}
\end{table}

\begin{table}
\small
\begin{tabular}{lll}
   case & bundle ${\cal F}$ & OGr(n-1,2n)  \\
   \hline
   (oe1) & ${\cal L}_-(2) \oplus {\cal L}_+$ & OGr(2,6) \\
   (oe2) & ${\cal L}_-^{\otimes 3} \oplus {\cal L}_+^{\otimes 3}$ & OGr(2,6) \\
   (oe3) & ${\cal L}_-(1) \oplus {\cal L}_+(1)$ & OGr(2,6) \\
   (oe4) & ${\cal L}_-^{\otimes 2}(1) \oplus {\cal L}_+^{\otimes 2}$ & OGr(2,6) \smallskip \\
   
   (oe5) & ${\cal L}_-^{\oplus 3} \oplus ({\cal U}^*\otimes{\cal L}_+)$ & OGr(3,8) \\
   (oe6) & ${\cal O}(1)^{\oplus 2}\oplus {\cal L}_-^{\oplus 2} \oplus {\cal L}_+^{\oplus 2}$ & OGr(3,8) \\
   (oe7) & ${\cal L}_-^{\otimes 2} \oplus {\cal L}_+^{\otimes 2}\oplus {\cal L}_-^{\oplus 2} \oplus {\cal L}_+^{\oplus 2}$ & OGr(3,8) \\
   (oe8) & ${\cal L}_-(1)\oplus {\cal L}_-^{\oplus 2} \oplus {\cal L}_+^{\oplus 3}$ & OGr(3,8) \\
   (oe9) & ${\cal O}(1)\oplus {\cal L}_-^{\otimes 2}\oplus {\cal L}_- \oplus {\cal L}_+^{\oplus 3}$ & OGr(3,8) \\
   (oe10) & $({\cal L}_-^{\otimes 2})^{\oplus 2} \oplus {\cal L}_+^{\oplus 4}$ & OGr(3,8) \\
   (oe11) & $\Lambda^2{\cal U}^*\oplus {\cal O}(1)\oplus {\cal L}_- \oplus {\cal L}_+$ & OGr(3,8) \\
   (oe12) & $\Lambda^2{\cal U}^*\oplus {\cal L}_-^{\otimes 2} \oplus {\cal L}_+^{\oplus 2}$ & OGr(3,8) \\
   (oe13) & ${\cal L}_-^{\oplus 4} \oplus {\cal L}_+ \oplus {\cal L}_+^{\otimes 3}$ & OGr(3,8) \\
   
\end{tabular}
\caption{Threefolds in OGr(n-1,2n), see Theorem \ref{theorem3}}
\label{tablethreen-1}
\end{table}

\begin{table}
\small
\begin{tabular}{lllll}
   case & bundle ${\cal F}$ & Gr(k,n) & $\chi({\cal O}_Y)$ & degree\\
   \hline
   (a) & complete intersection of hypersurfaces & $\mathbb{P}^n$ & & \\
   \hline
   (b7) & ${\cal O}(1)\oplus{\cal O}(3)$ & Gr(2,4) & 2 & 6 \\
   (b8) & ${\cal O}(2)^{\oplus 2}$ & Gr(2,4) & 2 & 8 \smallskip \\
   
   (b1) & ${\cal Q}(1)\oplus{\cal O}(1)$ & Gr(2,5) & 2 & 14 \\
   (b2) & $\Lambda^2{\cal Q}\oplus{\cal O}(3)$ & Gr(2,5) & 2 & 6 \\
   (b9) & $S^2{\cal U}^*\oplus{\cal O}(2)$ & Gr(2,5) & 2 & 16 \\
   M.(b10) & ${\cal O}(1)^{\oplus 3}\oplus{\cal O}(2)$ & Gr(2,5) & 2 & 10 \smallskip \\
   
   (b3) & $\Lambda^3{\cal Q}\oplus{\cal O}(2)\oplus{\cal O}(1)$ & Gr(2,6) & 2 & 12 \\
   (b4) & $\Lambda^3{\cal Q}\oplus{\cal U}^*(1)$ & Gr(2,6) & 2 & 14 \\
   (b12) & $S^2{\cal U}^*\oplus{\cal O}(1)^{\oplus 3}$ & Gr(2,6) & 2 & 20 \\  
   M.(b13) & ${\cal O}(1)^{\oplus 6}$ & Gr(2,6) & 2 & 14 \\
   (b11) & $(S^2{\cal U}^*)^{\oplus 2}$ & Gr(2,6) & 0 & 32 \smallskip \\
   
   (b5) & $S^2{\cal U}^*\oplus\Lambda^4{\cal Q}$ & Gr(2,7) & 2 & 24 \\
   (b6) & ${\cal O}(1)^{\oplus 3}\oplus\Lambda^4{\cal Q}$ & Gr(2,7) & 2 & 18 \\
   \hline
   
   (c2) & $\Lambda^2{\cal U}^*\oplus \Lambda^2{\cal Q} \oplus {\cal O}(2)$ & Gr(3,6) & 2 & 12 \\
   (c4) & $S^2{\cal U}^* \oplus {\cal O}(2)$ & Gr(3,6) & 4 & 32 \\
   (c5) & $(\Lambda^2{\cal U}^*)^{\oplus 2} \oplus {\cal O}(2)$ & Gr(3,6) & 2 & 12\\
   M.(c6) & $\Lambda^2{\cal U}^* \oplus {\cal O}(1)^{\oplus 4}$ & Gr(3,6) & 2 & 16 \smallskip \\
   
   (c3) & $S^2{\cal U}^* \oplus \Lambda^3{\cal Q}$ & Gr(3,7) & 2 & 48 \\
   M.(c9) & $\Lambda^3{\cal Q} \oplus (\Lambda^2{\cal U}^*)^{\oplus 2}$ & Gr(3,7) & 2 & 24 \\
   (c7) & $S^2{\cal U}^* \oplus \Lambda^2{\cal U}^* \oplus {\cal O}(1)$ & Gr(3,7) & 2 & 48 \\
   (c8) & $(\Lambda^2{\cal U}^*)^{\oplus 3} \oplus {\cal O}(1)$ & Gr(3,7) & 2 & 22 \smallskip \\
   
   (c1) & $\Lambda^3{\cal Q} \oplus \Lambda^2{\cal U}^*$ & Gr(3,8) & 2 & 36 \\
   \hline
   
   (d1) & $S^2{\cal U}^* \oplus \Lambda^3{\cal U}^*$ & Gr(4,8) & 4 & 96 \\
   (d1.1) & $S^2{\cal U}^* \oplus \Lambda^3{\cal Q}$ & Gr(4,8) & 4 & 96 \\
   (d2) & $(\Lambda^2{\cal U}^*)^{\oplus 2} \oplus {\cal O}(1)^{\oplus 2}$ & Gr(4,8) & 2 & 24 \\
   (d2.1) & $\Lambda^2{\cal U}^*\oplus \Lambda^2{\cal Q} \oplus {\cal O}(1)^{\oplus 2}$ & Gr(4,8) & 2 & 24 \smallskip \\
   
   M.(d3) & $(\Lambda^2{\cal U}^*)^{\oplus 3}$ & Gr(4,9) & 2 & 38 \\
   
\end{tabular}
\caption{Surfaces in ordinary Grassmannians, see Theorem \ref{thm2ord}}
\label{tabletwoord}
\end{table}

\begin{table}
\small
\begin{tabular}{lllll}
   case & bundle ${\cal F}$ & IGr(k,2n) & $\chi({\cal O}_Y)$ & degree \\
   \hline
   (sb1) & $({\cal U}^{\perp}/ {\cal U})(1)\oplus{\cal U}^*\oplus{\cal O}(2)$ & IGr(2,6) & 2 & 12 \\
   (sb2) & $({\cal U}^{\perp}/ {\cal U})(1)\oplus S^2{\cal U}^*$ & IGr(2,6) & 2 & 24 \\
   (sb3) & $({\cal U}^{\perp}/ {\cal U})(1)^{\oplus 2}\oplus{\cal O}(1)$ & IGr(2,6) & 2 & 24 \\
   (sb4) & $({\cal U}^{\perp}/ {\cal U})(1)\oplus{\cal O}(1)^{\oplus 3}$ & IGr(2,6) & 2 & 18 \\
   
\end{tabular}
\caption{Surfaces in symplectic Grassmannians, see Theorem \ref{thm2class}}
\label{tabletwosym}
\end{table}

\begin{table}
\small
\begin{tabular}{lllll}
   case & bundle ${\cal F}$ & OGr(k,n) & $\chi({\cal O}_Y)$ & degree \\
   \hline
   (ob1) & ${\cal T}_{+\frac{1}{2}}(1)^{\oplus 2}\oplus{\cal O}(2)$ & OGr(2,9) & 2 & 12 \smallskip \\
   
   (ob2) & ${\cal T}_{+\frac{1}{2}}(1)\oplus{\cal O}(2)$ & OGr(2,13) & 4 & 24 \\
   \hline
   
   (ox1) & ${\cal T}_{+\frac{1}{2}}(1)^{\oplus 2} \oplus {\cal O}(2)$ & OGr(2,7) & 2 & 12 \\
   (ox2) & ${\cal T}_{+\frac{1}{2}}(1) \oplus S^2{\cal U}^*$ & OGr(2,7) & 2 & 24 \\
   (ox3) & ${\cal T}_{+\frac{1}{2}}(1) \oplus S^2{\cal T}_{+\frac{1}{2}}(1)$ & OGr(2,7) & 2 & 24 \\
   (ox4) & ${\cal T}_{+\frac{1}{2}}(1) \oplus {\cal O}(1)^{\oplus 3}$ & OGr(2,7) & 2 & 18  \smallskip \\
   
   M.(ox5) & ${\cal T}_{+\frac{1}{2}}(1)^{\oplus 5}$ & OGr(3,9) & 2 & 34 \\
   \hline
   
   (oy2) & ${\cal O}(\frac{3}{2}) \oplus {\cal O}(\frac{1}{2})^{\oplus 3}$ & OGr(3,7) & 2 & 6 \\
   (oy3) & ${\cal U}^*(\frac{1}{2}) \oplus {\cal O}(\frac{1}{2})$ & OGr(3,7) & 2 & 12 \\
   (oy4) & ${\cal O}(\frac{1}{2})^{\oplus 2} \oplus {\cal O}(1)^{\oplus 2}$ & OGr(3,7) & 2 & 8 \smallskip \\
   
   (oy1) & ${\cal O}(\frac{1}{2})^{\oplus 2} \oplus \Lambda^2{\cal U}^*$ & OGr(4,9) & 2 & 24 \\
   M.(oy5) & ${\cal O}(\frac{1}{2})^{\oplus 8}$ & OGr(4,9) & 2 & 12 \\
   
\end{tabular}
\caption{Surfaces in odd orthogonal Grassmannians, see Theorem \ref{thm2class}}
\label{tabletwoodd}
\end{table}

\begin{table}
\small
\begin{tabular}{lllll}
   case & bundle ${\cal F}$ & OGr(k,n) & $\chi({\cal O}_Y)$ & degree \\
   \hline
   (ob3) & ${\cal T}_{+\frac{1}{2}}(2) $ & Quadric in $\mathbb{P}^7$ & 2 & 12 \smallskip \\
   
   (ob4) & ${\cal T}_{+\frac{1}{2}}(1)^{\oplus 2} \oplus{\cal O}(1)^{\oplus 3}$ & OGr(2,10) & 2 & 18 \\
   (ob4.2) & ${\cal T}_{+\frac{1}{2}}(1) \oplus{\cal T}_{-\frac{1}{2}}(1) \oplus{\cal O}(1)^{\oplus 3}$ & OGr(2,10) & 2 & 20 \\
   (ob5) & ${\cal T}_{+\frac{1}{2}}(1)^{\oplus 2} \oplus S^2{\cal U}^*$ & OGr(2,10) & 2 & 24 \\
   (ob5.2) & ${\cal T}_{+\frac{1}{2}}(1) \oplus{\cal T}_{-\frac{1}{2}}(1)\oplus S^2{\cal U}^*$ & OGr(2,10) & 2 & 24 \smallskip \\
   
   (ob6) & ${\cal T}_{+\frac{1}{2}}(1) \oplus{\cal O}(1)^{\oplus 3}$ & OGr(2,14) & 4 & 36  \\
   (ob7) & ${\cal T}_{+\frac{1}{2}}(1) \oplus S^2{\cal U}^*$ & OGr(2,14) & 4 & 48 \\
   \hline
   
   (ow1) & ${\cal T}_{+\frac{1}{2}}(1)^{\oplus 2} \oplus{\cal O}(1)^{\oplus 3}$ & OGr(2,8) & 2 & 20 \\
   (ow2) & ${\cal T}_{+\frac{1}{2}}(1)^{\oplus 2} \oplus (1,1;1;1)$ & OGr(2,8) & 0 & 32 \\
   (ow3) & ${\cal T}_{+\frac{1}{2}}(1)^{\oplus 2} \oplus S^2{\cal U}^*$ & OGr(2,8) & 2 & 24 \\
   (ow4) & $({\cal T}_{+\frac{1}{2}}(1)^{\oplus 3} \oplus{\cal O}(2)$ & OGr(2,8) & 2 & 16 \\
   (ow1.2) & ${\cal T}_{+\frac{1}{2}}(1) \oplus{\cal T}_{-\frac{1}{2}}(1) \oplus{\cal O}(1)^{\oplus 3}$ & OGr(2,8) & 2 & 18 \\
   (ow2.2) & ${\cal T}_{+\frac{1}{2}}(1) \oplus{\cal T}_{-\frac{1}{2}}(1) \oplus (1,1;1;1)$ & OGr(2,8) & 2 & 24 \\
   (ow2.3) & ${\cal T}_{-\frac{1}{2}}(1)^{\oplus 2} \oplus (1,1;1;1)$ & OGr(2,8) & 2 & 24 \\
   (ow3.2) & ${\cal T}_{+\frac{1}{2}}(1) \oplus{\cal T}_{-\frac{1}{2}}(1)\oplus S^2{\cal U}^*$ & OGr(2,8) & 2 & 24 \\
   (ow4.2) & ${\cal T}_{+\frac{1}{2}}(1)^{\oplus 2} \oplus{\cal T}_{-\frac{1}{2}}(1) \oplus{\cal O}(2)$ & OGr(2,8) & 2 & 12 \\
   \hline
   
   (oz3) & ${\cal U}^*(\frac{1}{2})$ & OGr(4,8) & 2 & 12 \smallskip \\
   
   (oz4) & ${\cal O}(\frac{1}{2})^{\oplus 2} \oplus{\cal O}(\frac{3}{2})\oplus\Lambda^4{\cal U}^*(-\frac{1}{2})$ & OGr(5,10) & 2 & 6 \\
   (oz5) & ${\cal O}(1)^{\oplus 2} \oplus{\cal O}(\frac{1}{2})\oplus\Lambda^4{\cal U}^*(-\frac{1}{2})$ & OGr(5,10) & 2 & 8 \smallskip \\
   
   (oz7) & ${\cal O}(1)\oplus\Lambda^5{\cal U}^*(-\frac{1}{2})^{\oplus 2}$ & OGr(6,12) & 2 & 12 \\
   
\end{tabular}
\caption{Surfaces in even orthogonal Grassmannians, see Theorem \ref{thm2class}}
\label{tabletwoeven}
\end{table}

\begin{table}
\small
\begin{tabular}{lllllll}
   case & bundle ${\cal F}$ & OGr(n-1,2n) & $\chi({\cal O}_Y)$ & deg(${\cal O}(1)$) & deg(${\cal L}_+$) & deg(${\cal L}_-$) \\
   \hline
   (oe1) & ${\cal L}_-^{\otimes 2} \oplus ({\cal U}^*\otimes {\cal L}_+)$ & OGr(2,6) & 2 & 26 & 4 & 6 \\
   (oe2) & ${\cal L}_+\oplus {\cal L}_-^{\otimes 3} \oplus {\cal L}_+^{\otimes 2}$ & OGr(2,6) & 2 & 18 & 0 & 6 \\
   (oe3) & ${\cal L}_-(1)\oplus {\cal L}_- \oplus {\cal L}_+^{\otimes 2}$ & OGr(2,6) & 2 & 18 & 4 & 2 \\
   (oe4) & ${\cal O}(1)\oplus {\cal L}_-(1) \oplus {\cal L}_+$ & OGr(2,6) & 2 & 16 & 2 & 4 \\
   (oe5) & ${\cal L}_-\oplus {\cal L}_- \oplus {\cal L}_+^{\otimes 2}(1)$ & OGr(2,6) & 2 & 10 & 4 & 0 \\
   (oe6) & ${\cal O}(2)\oplus {\cal L}_- \oplus {\cal L}_+$ & OGr(2,6) & 2 & 12 & 2 & 2 \\
   (oe7) & ${\cal O}(1)\oplus {\cal L}_-^{\otimes 2} \oplus {\cal L}_+^{\otimes 2}$ & OGr(2,6) & 2 & 24 & 4 & 4 \smallskip \\
   
   (oe8) & $\Lambda^2{\cal U}^*\oplus {\cal L}_-^{\oplus 2} \oplus {\cal L}_+^{\oplus 2}$ & OGr(3,8) & 2 & 36 & 8 & 8 \\
   (oe9) & ${\cal O}(1)\oplus {\cal L}_-^{\oplus 3} \oplus {\cal L}_+^{\oplus 3}$ & OGr(3,8) & 2 & 28 & 6 & 6 \\
   (oe10) & ${\cal L}_+^{\otimes 2}\oplus {\cal L}_-^{\oplus 4} \oplus {\cal L}_+^{\oplus 2}$ & OGr(3,8) & 2 & 28 & 8 & 4 \\
   
\end{tabular}
\caption{Surfaces in OGr(n-1,2n), see Theorem \ref{theorem2}}
\label{tabletwon-1}
\end{table}

\FloatBarrier

\bibliographystyle{alpha}
\bibliography{bibliiot}

\end{document}